\newtheorem{theorem}{Theorem}[section]
\newtheorem{corollary}[theorem]{Corollary}
\newtheorem{lemma}[theorem]{Lemma}
\newtheorem{proposition}[theorem]{Proposition}
\newtheorem{conjecture}[theorem]{Conjecture}
\theoremstyle{definition}
\newtheorem{definition}[theorem]{Definition}
\newtheorem{remark}[theorem]{Remark}
\theoremstyle{remark}
\renewcommand{\theclaim}{\textup{\theclaim}}
\newtheorem*{acknowledgements}{Acknowledgements}
\numberwithin{equation}{section}
\def\openone
\newbox\ipbox
\newcommand{\ip}[2]{\left\langle #1\, , \,#2\right\rangle}
\newcommand{\diracb}[1]{\left\langle #1\mathrel{\mathchoice
		
		{\setbox\ipbox=\hbox{$\displaystyle \left\langle\mathstrut
				#1\right.$}
			
			\vrule height\ht\ipbox width0.25pt depth\dp\ipbox}
		
		{\setbox\ipbox=\hbox{$\textstyle \left\langle\mathstrut
				#1\right.$}
			
			\vrule height\ht\ipbox width0.25pt depth\dp\ipbox}
		
		{\setbox\ipbox=\hbox{$\scriptstyle \left\langle\mathstrut
				#1\right.$}
			
			\vrule height\ht\ipbox width0.25pt depth\dp\ipbox}
		
		{\setbox\ipbox=\hbox{$\scriptscriptstyle \left\langle\mathstrut
				#1\right.$}
			
			\vrule height\ht\ipbox width0.25pt depth\dp\ipbox}
		
	}\right. }
\newcommand{\dirack}[1]{\left. \mathrel{\mathchoice
		
		{\setbox\ipbox=\hbox{$\displaystyle \left.\mathstrut
				#1\right\rangle$}
			
			\vrule height\ht\ipbox width0.25pt depth\dp\ipbox}
		
		{\setbox\ipbox=\hbox{$\textstyle \left.\mathstrut
				#1\right\rangle$}
			
			\vrule height\ht\ipbox width0.25pt depth\dp\ipbox}
		
		{\setbox\ipbox=\hbox{$\scriptstyle \left.\mathstrut
				#1\right\rangle$}
			
			\vrule height\ht\ipbox width0.25pt depth\dp\ipbox}
		
		{\setbox\ipbox=\hbox{$\scriptscriptstyle \left.\mathstrut
				#1\right\rangle$}
			
			\vrule height\ht\ipbox width0.25pt depth\dp\ipbox}
		
	} #1\right\rangle}
\newcommand{\beq}{\begin{equation}}
	\newcommand{\eeq}{\end{equation}}
\newcommand{\cj}[1]{\overline{#1}}
\newcommand{\bz}{\mathbb{Z}}
\newcommand{\br}{\mathbb{R}}
\newcommand{\bc}{\mathbb{C}}
\newcommand{\bn}{\mathbb{N}}
\def\blfootnote{\xdef\@thefnmark{}\@footnotetext}
\def\-{^{-1}}
\def\D{\mathscr{D}}
\def\ty{\emptyset}
	\newcommand{\End}{\operatorname*{End}}
\newcommand{\Ends}{\operatorname*{Ends}}
\newcommand{\lmin}{l_{\operatorname*{min}}}
\begin{document}
	
	\title[Spectral properties of unions of intervals and groups of local translations]{Spectral properties of unions of intervals and groups of local translations}

	\author{Bryan Ducasse}
	\address{[Bryan Ducasse] Department of Mathematics\\
		University of Iowa\\
		2 West Washington Street\\
		Iowa City, Iowa 52240, USA\\} \email{bducasse@uiowa.edu}

	\author{Dorin Ervin Dutkay}
	\address{[Dorin Ervin Dutkay] University of Central Florida\\
		Department of Mathematics\\
		4000 Central Florida Blvd.\\
		P.O. Box 161364\\
		Orlando, FL 32816-1364\\
		U.S.A.\\} \email{Dorin.Dutkay@ucf.edu}
	
		\author{Colby Fernandez}
	\address{[Colby Fernandez] School of Mathematics\\
		Georgia Institute of Technology\\
		686 Cherry Street\\
		Atlanta, GA 30332-0160\\ USA\\} \email{cfernandez68@gatech.edu}

	\subjclass[2010]{47E05,42A16}
	\keywords{differential operator, self-adjoint operator, unitary group, Fourier bases, Fuglede conjecture}

	\begin{abstract}
		In connection to the Fuglede conjecture, and to Fuglede's original work \cite{Fug74}, 
	we study one-parameter unitary groups associated to self-adjoint extensions of the differential operator $Df=\frac1{2\pi i}f'$ on a union of finite intervals. We present a formula for such unitary groups and we use it to discover some geometric properties of such sets in $\br$ which admit orthogonal bases of exponential functions (also called spectral sets). In particular, we show that, when a union of intervals is a spectral set, then the lengths of the gaps between the intervals must be sums of lengths of the intervals of the set, with possible repetition.

	\end{abstract}
	\maketitle
	\tableofcontents
	\newcommand{\Ds}{\mathsf{D}}
	\newcommand{\Dmax}{\mathscr D_{\operatorname*{max}}}
	\newcommand{\Dmin}{\Ds_{\operatorname*{min}}}
	\newcommand{\dom}{\operatorname*{dom}}
	\newcommand{\Exp}{\operatorname{Exp}}

	\section{Introduction}\label{sec1}

	{\bf Background.} 	In 1958 Irving Segal posed the following problem to Bent Fuglede: Let $\Omega$ be an open set in $\br^d$. Consider the partial differential operators $D_1=\frac1{2\pi i}\frac\partial{\partial x_1},\dots,D_d=\frac1{2\pi i}\frac\partial{\partial x_d}$ defined on the space of $C^\infty$-differentiable, compactly supported functions on $\Omega$, denoted by $C_0^\infty(\Omega)$. With commutation being understood in the sense of commuting spectral measures, under what conditions do
	these partial differential operators admit commuting (unbounded) self-adjoint extensions $H_1,\dots,H_d$ on $L^2(\Omega)$?

	In his 1974 seminal paper \cite{Fug74}, Fuglede offered the following answer to Segal's question, for {\it connected domains} $\Omega$, of {\it finite measure} and which are Nikodym domains, meaning that the Poincar\'e inequality is satisfied:
	
	\begin{theorem}\label{th2.1}\cite[Theorem I]{Fug74}	Let $\Omega\subset\br^d$ be a finite measure, open and connected Nikodym region. Denote by 
		$$e_\lambda(x)=e^{2\pi i\lambda\cdot x},\quad (x\in\br^d,\lambda\in\br^d).$$
		
		\begin{enumerate}
			\item[(a)] Let $H=(H_1,\dots,H_d)$ denote a commuting family (if any) of self-adjoint extensions $H_j$ of the differential operators $D_j$ on $L^2(\Omega)$, $j=1,\dots,n$. Then $H$ has a discrete spectrum, each point $\lambda\in \sigma(H)$ being a simple eigenvalue for $H$ with the eigenspace $\bc e_\lambda$, and hence $(e_\lambda)_{\lambda\in \sigma(H)}$ is an orthogonal basis for $L^2(\Omega)$. Moreover, the spectrum $\sigma(H)$ and the point spectrum $\sigma_p(H)$ are equal and  $$\sigma(H)=\sigma_p(H)=\{\lambda\in\br^d : e_\lambda\in\mathscr D( H)\}.$$
			$\mathscr D(H)$ denotes the intersection of the domains of the operators $H_j$: $\mathscr D(H):=\cap_j \mathscr D(H_j)$.
			\item[(b)] Conversely, let $\Lambda$ denote a subset (if any) of $\br^d$ such that $(e_\lambda)_{\lambda\in\Lambda}$ is an orthogonal basis for $L^2(\Omega)$. Then there exists a unique commuting family $H=(H_1,\dots,H_d)$ of self-adjoint extensions $H_j$ of $D_j$ on $L^2(\Omega)$ with the property that $\{e_\lambda : \lambda\in\Lambda\}\subset\mathscr D(H)$, or equivalently that $\Lambda=\sigma(H)$.
		\end{enumerate}
	\end{theorem}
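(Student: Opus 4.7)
The plan is to treat the two parts separately; in both directions I would lean on the single observation that any self-adjoint extension $H_j\supset D_j$ sits inside $D_j^*$ (because $D_j\subset H_j=H_j^*\subset D_j^*$), so every $u\in\mathscr D(H_j)$ has a distributional partial derivative $\partial_ju\in L^2(\Omega)$ with $H_ju=\frac{1}{2\pi i}\partial_j u$.

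For part (a), suppose first that $H\phi=\mu\phi$, i.e.\ $H_j\phi=\mu_j\phi$ for every $j$. The inclusion $H_j\subset D_j^*$ gives $\partial_j\phi=2\pi i\mu_j\phi$ as distributions, so the gradient of $e^{-2\pi i\mu\cdot x}\phi(x)$ vanishes; by connectedness of $\Omega$ this function is a.e.\ constant, and hence $\phi\in\bc e_\mu$ (self-adjointness forces $\mu_j\in\br$, so $e_\mu\in L^2(\Omega)$ since $|\Omega|<\infty$). Thus every eigenvalue is simple with eigenfunction $e_\mu$. Conversely, if $e_\lambda\in\mathscr D(H_j)$ then $H_je_\lambda=\frac{1}{2\pi i}\partial_je_\lambda=\lambda_je_\lambda$, so $\lambda\in\sigma_p(H)$. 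Together these yield $\sigma_p(H)=\{\lambda:e_\lambda\in\mathscr D(H)\}$ and the identification of the eigenspaces with $\bc e_\lambda$.

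The main obstacle is proving completeness, equivalently $\sigma(H)=\sigma_p(H)$. Here I would feed the commuting-spectral-measures assumption through Stone's theorem to obtain the strongly continuous unitary $\br^d$-representation $U(t):=\exp(2\pi i\sum_j t_j H_j)$ on $L^2(\Omega)$. For $f\in C_0^\infty(\Omega)$ and $t$ small enough that $\supp f+t\Subset\Omega$, a uniqueness argument for the ODE $\frac{d}{ds}U(st)f=2\pi i(t\cdot H)U(st)f$ identifies $U(t)f$ with the literal translate $f(\cdot+t)$. The Nikodym (Poincar\'e) hypothesis, together with the connectedness of $\Omega$, should then allow this ``local translation'' identification to be propagated by a chain argument along paths, realizing $U$ as a family of measure-preserving partial transformations of $\Omega$. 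Combined with $|\Omega|<\infty$, this should force every $U$-orbit to be pre-compact in $L^2(\Omega)$, and an almost-periodic (Bohr compactification) argument for abelian unitary representations then forces the joint spectral measure of $(H_1,\ldots,H_d)$ to be purely atomic. This step is the technical heart of the proof and genuinely uses all three hypotheses; once pure pointness is granted, the spectral theorem decomposes $L^2(\Omega)$ as the orthogonal sum of the joint eigenspaces $\bc e_\lambda$, producing the exponential basis.

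For part (b), given an orthogonal basis $(e_\lambda)_{\lambda\in\Lambda}$, I would define $H_j$ via the spectral theorem to be the self-adjoint operator with $H_j e_\lambda=\lambda_j e_\lambda$; the family commutes automatically because it is jointly diagonal in this basis. To check $D_j\subset H_j$, pair $f\in C_0^\infty(\Omega)$ against the basis: integration by parts (with no boundary term since $\supp f\Subset\Omega$) gives $\langle D_jf,e_\lambda\rangle=\lambda_j\langle f,e_\lambda\rangle=\langle f,H_je_\lambda\rangle$, and expanding $f$ in the basis shows $f\in\mathscr D(H_j)$ with $H_j f=D_j f$. Uniqueness is immediate: any other commuting self-adjoint extension $H'$ with $\{e_\lambda\}\subset\mathscr D(H')$ satisfies $H_j' e_\lambda=\lambda_j e_\lambda$ by the argument of (a), and two self-adjoint operators that agree on the orthogonal basis $(e_\lambda)$ must coincide.
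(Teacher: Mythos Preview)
The paper does not prove this theorem: it is stated in the Introduction as background and attributed to Fuglede via \texttt{\textbackslash cite[Theorem I]\{Fug74\}}, with no accompanying proof. There is therefore nothing in the paper to compare your argument against.

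That said, your sketch follows the classical Fuglede line. The identification of eigenspaces with $\bc e_\lambda$ via $H_j\subset D_j^*$ and connectedness is correct, as is the construction in (b). You have correctly located the genuine difficulty in (a), namely showing the joint spectral measure is purely atomic, and you honestly flag it as incomplete. Your proposed route through Stone's theorem and an almost-periodicity/compactness argument is in the right spirit, but the passage from ``local translation on small $t$'' to a global conclusion about pure point spectrum is where Fuglede's original proof does real work (this is precisely where the Nikodym hypothesis enters, to control the $L^2$-norm of $U(t)f - f$ uniformly and force almost-periodicity), and your chain-of-paths description does not yet pin down the analytic mechanism. If you want to complete this, consult Fuglede's original paper for that step; the present paper will not help, as it only uses the one-dimensional, interval-union situation where the spectral analysis is done by hand via the boundary matrix $B$.
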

	
	\begin{definition}
		\label{defsp}
		A subset $\Omega$ of $\br^d$ of finite Lebesgue measure is called {\it spectral} if there exists a set $\Lambda$ in $\br^d$, such that the family of exponential functions 
		$$\{e_\lambda : \lambda\in\Lambda\}$$
		forms an orthogonal basis for $L^2(\Omega)$. In this case, we say that $\Lambda$ is a {\it spectrum} for $\Omega$. 
		
		We say that a set $\Omega$ {\it tiles} $\br^d$ by translations if  there exists a set $\mathcal T$ in $\br^d$ such that $\{\Omega +t : t\in\mathcal T\}$ is a partition of $\br^d$ up to measure zero. We also call $\mathcal T$ a {\it tiling set} for $\Omega$ and we say that $\Omega$ {\it tiles by} $\mathcal T$. 
	\end{definition}
	
	In these terms, when $\Omega$ is connected, of finite measure and a Nikodym domain, there exist commuting self-adjoint extensions of the partial differential operators $\{D_j\}$ if and only if $\Omega$ is spectral.  Fuglede proposed a geometric characterization of spectral sets in his famous conjecture:
	\begin{conjecture}{\bf[Fuglede's Conjecture]}
		\label{conFu}
		A subset $\Omega$ of $\br^d$  of finite Lebesgue measure is spectral if and only if it tiles $\br^d$ by translations.
	\end{conjecture}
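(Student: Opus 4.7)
The statement is a biconditional between two \emph{a priori} unrelated conditions, so I plan to attack the two implications separately, with the Fourier transform of $\chi_\Omega$ as the common bridge. Tiling by a set $\mathcal T$ is equivalent (a.e.) to
\[
\sum_{t\in\mathcal T}\chi_\Omega(x-t)=1,
\]
while $\Lambda$ being a spectrum is equivalent to the Parseval-type identity
\[
\sum_{\lambda\in\Lambda}\lvert\widehat{\chi_\Omega}(\xi-\lambda)\rvert^2=\lvert\Omega\rvert^2.
\]
Both conditions concentrate information on the zero set $Z(\widehat{\chi_\Omega})$: orthogonality of $\{e_\lambda\}_{\lambda\in\Lambda}$ forces $(\Lambda-\Lambda)\setminus\{0\}\subset Z(\widehat{\chi_\Omega})$, and formally applying Poisson summation to the tiling identity forces the frequency support of the counting measure $\sum_{t\in\mathcal T}\delta_t$ to lie in $\{0\}\cup Z(\widehat{\chi_\Omega})$. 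The program is therefore to set up a bijective correspondence between ``maximally separated'' subsets of $Z(\widehat{\chi_\Omega})$ and the complementary structure that realizes the other property.

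\textbf{Lattice warm-up.} I would first carry out the argument when either $\mathcal T$ or $\Lambda$ is a full-rank lattice. If $\mathcal T=L$ is a lattice, Poisson summation applied to the tiling identity gives $\widehat{\chi_\Omega}(\ell^*)=0$ for every nonzero $\ell^*$ in the dual lattice $L^*$, and the volume identity $\lvert\Omega\rvert\lvert L^*\rvert=1$ then identifies $L^*$ as a spectrum; the converse is symmetric. The next step would be to reduce the general case to the lattice case by ``periodizing'' an arbitrary tiling complement or spectrum — a reduction that is known to succeed under rationality of the zero set of $\widehat{\chi_\Omega}$ but is delicate in general.

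\textbf{Beyond the lattice case.} For the non-lattice case I would couple the Fourier constraints with geometric input from $\Omega$ itself, via Theorem~\ref{th2.1}. On the tiling-to-spectral side I would attempt to solve, for a measure $\mu=\sum_{\lambda\in\Lambda}\delta_\lambda$, the convolution identity $\mu*\lvert\widehat{\chi_\Omega}\rvert^2\equiv\lvert\Omega\rvert^2$, using the tiling complement $\mathcal T$ to produce the atoms of $\mu$ inside $Z(\widehat{\chi_\Omega})$. On the reverse side I would use Fuglede's theorem to translate the existence of a spectrum into the existence of a commuting self-adjoint extension of $D=\frac{1}{2\pi i}\frac{d}{dx}$, and try to read a tiling complement off the joint boundary conditions of such an extension — which is precisely the operator-theoretic framework the present paper develops.

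\textbf{Main obstacle.} The honest assessment is that the conjecture as written is \emph{false} in dimensions $d\ge 3$ by the counterexamples of Tao, Kolountzakis--Matolcsi and Farkas--R\'ev\'esz--M\'ora, and remains open in dimensions $1$ and $2$. Hence no plan can prove the statement verbatim; the most one can realistically hope for is the two low-dimensional cases. Even there, the decisive difficulty is controlling the possibly non-periodic structure of a spectrum or tiling complement: without periodicity the Fourier arguments produce only tempered distributions supported on $Z(\widehat{\chi_\Omega})$, and extracting from such a distribution a genuine discrete set of the correct density and separation is the point at which every existing attempt stalls. It is precisely to chip away at this obstacle in the one-dimensional setting — via the operator-theoretic description of self-adjoint boundary conditions on unions of intervals — that the rest of this paper is devoted.
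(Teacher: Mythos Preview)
The statement you were asked to prove is labeled \emph{Conjecture} in the paper, and the paper does not attempt to prove it. Immediately after stating it, the authors write that ``the conjecture was proved to be false in dimensions 3 and higher \cite{Tao04, FMM06}, but it is true for convex domains \cite{LM22}.'' There is therefore no ``paper's own proof'' to compare your proposal against.

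You correctly diagnose this in your \textbf{Main obstacle} paragraph: the statement as written is false for $d\ge 3$ and open for $d\le 2$, so no proof can exist. That diagnosis is the right answer. The earlier parts of your proposal --- the Fourier/zero-set dictionary, the lattice warm-up via Poisson summation, and the attempt to periodize --- are a reasonable survey of the standard toolkit surrounding the conjecture, and your lattice case is essentially Fuglede's original argument from \cite{Fug74}. But since the target is a conjecture (and a disproved one in high dimensions), the appropriate response is simply to note its status, as the paper does, rather than to sketch a proof strategy.
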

	The conjecture was proved to be false in dimensions 3 and higher \cite{Tao04, FMM06}, but it is true for convex domains \cite{LM22}.

	In this paper we focus on dimension one, on finite unions of finite intervals. Throughout the paper $\Omega$ will denote such a union:
	\begin{equation}
		\label{eqom}
			\Omega=\bigcup_{i=1}^n(\alpha_i,\beta_i),\mbox{ where }-\infty<\alpha_1<\beta_1<\alpha_2<\beta_2<\dots\alpha_n<\beta_n<\infty.
	\end{equation}

{\bf Previous Results.}	The self-adjoint extensions of the differential operator $D=D_1$ on $C_0^\infty(\Omega)$ were determined in \cite{DJ15b, DJ23, CDJ25}; 	detailed analyses can be found for unions of two finite intervals in \cite{JPT12,JPT13}, for finite unions of finite intervals in \cite{DJ15b, DJ23, CDJ25}, for finite unions of intervals the form $$\Omega=(-\infty,\beta_1)\cup\bigcup_{j=1}^n(\alpha_j,\beta_j)\cup (\alpha_{n+1},\infty)$$ in \cite{JPT15}. 
	
	We recall some of the main results in \cite{DJ15b,DJ23}. Define the maximal domain 
	$$\mathscr D=\{f\in L^2(\Omega) : \mbox{ The weak/distributional derivative $f'$ is in $L^2(\Omega)$}\}.$$
	It can also be described as 
	$$\mathscr D=\{f\in L^2(\Omega): f \mbox{ absolutely continuous on each interval and }f'\in L^2(\Omega)\}.$$
	
	We use the same notation $D$ for the differential operator on $\mathscr D$, $Df=\frac1{2\pi i}f'$, $f\in\mathscr D$.

	For an absolutely continuous function $f\in\mathscr D$, the side limits $f(\alpha_i+)$ and $f(\beta_i-)$ are well defined, and we denote them by $f(\alpha_i)$ and $f(\beta_i)$, respectively, $i=1,\dots,n$.
	We also use the notation 
	$f(\vec\alpha)=(f(\alpha_1),\dots,f(\alpha_n))$, and similarly for $f(\vec\beta)$. 
	
	For $\vec z=(z_1,z_2,\dots,z_n)\in\bc^n$ denote by $E(\vec z)$, the $n\times n$ diagonal matrix with entries $e^{2\pi iz_1}$, $e^{2\pi iz_2}$, $\dots,e^{2\pi iz_n}$.

	The self-adjoint extensions $H$ of the differential operator $D$ are specified by a certain $n\times n$ ``boundary'' matrix $B$ which relates the boundary values of the functions in the domain by $Bf(\vec\alpha)=f(\vec\beta)$.
	
	\begin{theorem}\label{th7.1}\cite[Theorem 3.2, Theorem 4.2]{DJ23}
		A self-adjoint extension $H$ of the maximal operator $D$ on $\Omega$ is uniquely determined by an $n\times n$ unitary matrix $B$ (which we call {\it the boundary matrix associated to $H$}), through the condition 
		$$\mathscr D(H)=\{f\in\mathscr D : Bf(\vec\alpha)=f(\vec\beta)\}=:\mathscr{D}_B.$$
		Conversely, any unitary $n\times n$ matrix $B$ determines a self-adjoint extension $H$ of $D$ with domain $\mathscr D(H)$ as above. We denote $H=D_B$.

		The spectral measure of $D_B$ is atomic, supported on the spectrum  
		$$\Lambda:=\sigma(D_B)=\left\{\lambda\in\bc : \det(I-E(\lambda\vec\beta)^{-1}BE(\lambda\vec\alpha))=0\right\}\subseteq\br$$
		which is a discrete unbounded set. For an eigenvalue $\lambda\in\sigma(D_B)$, the eigenspace has dimension at most $n$, and it consists of functions of the form 
		
		\begin{equation}\label{eq7.1.1}
			f(x)= e^{2\pi  i \lambda x} \sum_{i=1}^n c_i\chi_{(\alpha_i,\beta_i)}(x),\mbox{ where }c=(c_i)_{i=1}^n\in\bc^n\mbox{, and }BE(\lambda\vec\alpha)c=E(\lambda\vec\beta)c.
		\end{equation}
		
	\end{theorem}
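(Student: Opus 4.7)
The plan is to follow the classical von Neumann / boundary-form approach for first-order differential operators. Write $\Dmin$ for the closure of $D$ on $C_0^\infty(\Omega)$; its adjoint is the maximal operator $D$ on $\mathscr D$, with $\mathscr D(\Dmin)$ consisting of $f\in\mathscr D$ vanishing at all $2n$ endpoints. Any self-adjoint extension $H$ satisfies $\Dmin\subseteq H\subseteq D$, so its domain is determined by a subspace of $\mathscr D$ which is maximal isotropic for the boundary sesquilinear form.

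First I would compute the boundary form: integration by parts on each $(\alpha_i,\beta_i)$ yields, for $f,g\in\mathscr D$,
\[
\langle Df,g\rangle-\langle f,Dg\rangle=\tfrac{1}{2\pi i}\bigl[\langle f(\vec\beta),g(\vec\beta)\rangle_{\bc^n}-\langle f(\vec\alpha),g(\vec\alpha)\rangle_{\bc^n}\bigr].
\]
The trace map $\gamma:\mathscr D\to\bc^n\oplus\bc^n$, $\gamma(f)=(f(\vec\alpha),f(\vec\beta))$, has kernel $\mathscr D(\Dmin)$ and is surjective via a bump-function construction near each endpoint. Hence self-adjoint extensions correspond bijectively to maximal subspaces $V\subseteq\bc^n\oplus\bc^n$ annihilating the indefinite form $Q(x,y)=\|y\|^2-\|x\|^2$. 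A direct linear-algebra argument shows that these are exactly the graphs $V=\{(x,Bx):x\in\bc^n\}$ of unitary $B\in U(n)$: isotropy implies each projection defines a partial isometry, and maximality forces both projections to be onto. This yields the bijection $H\leftrightarrow B$ and the stated domain description.

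For the spectral description, fix unitary $B$ and solve $(D_B-\lambda)f=0$. On each interval the ODE $\tfrac{1}{2\pi i}f'=\lambda f$ forces $f(x)=c_i e^{2\pi i\lambda x}$, so $f(\vec\alpha)=E(\lambda\vec\alpha)c$ and $f(\vec\beta)=E(\lambda\vec\beta)c$. The boundary condition $Bf(\vec\alpha)=f(\vec\beta)$ rewrites as $c\in\ker\bigl(I-E(\lambda\vec\beta)^{-1}BE(\lambda\vec\alpha)\bigr)$, proving \eqref{eq7.1.1} and the dimension bound $n$. The function $F(\lambda)=\det\bigl(I-E(\lambda\vec\beta)^{-1}BE(\lambda\vec\alpha)\bigr)$ is entire in $\lambda$; since $D_B$ is self-adjoint, $\sigma(D_B)\subseteq\br$, so $F$ does not vanish off $\br$, hence $F\not\equiv 0$ and its zero set is discrete in $\br$. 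Unboundedness follows from compactness of the resolvent of $D_B$ on the finite-measure set $\Omega$ (via the compact Sobolev embedding on each finite interval), which produces infinitely many eigenvalues going to $\pm\infty$.

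The main technical obstacle is Step 1 combined with the classification in Step 2: one must verify that the deficiency indices of $\Dmin$ equal $(n,n)$, each interval contributing $(1,1)$, so that maximal isotropic subspaces of the $2n$-dimensional boundary quotient have dimension exactly $n$, matching $U(n)$. This dimension count is what ensures every self-adjoint extension arises as some $D_B$, rather than merely showing that each unitary $B$ produces a symmetric extension. The surjectivity of $\gamma$, though intuitively clear from cutoff constructions, is the essential ingredient that makes the parametrization exhaustive.
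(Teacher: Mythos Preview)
The paper does not prove this theorem; it is stated in the introduction as a recall of results from the earlier references \cite{DJ15b,DJ23,CDJ25} (see the sentence ``We recall some of the main results in \cite{DJ15b,DJ23}'' immediately preceding it). There is therefore no proof in the present paper to compare your argument against.

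That said, your outline is correct and follows the standard boundary-triple/von Neumann approach, which is essentially what the cited references do. The integration-by-parts computation of the boundary form, the identification of maximal isotropic subspaces of $(\bc^n\oplus\bc^n,\,\|y\|^2-\|x\|^2)$ with graphs of unitaries, the eigenfunction computation, and the entire-function plus compact-resolvent argument for discreteness and unboundedness are all sound. Your remark that the deficiency indices are $(n,n)$, with each interval contributing $(1,1)$ via the $L^2$ solutions $e^{\pm 2\pi x}$, is exactly the dimension count that makes the parametrization by $U(n)$ exhaustive. The only point worth making explicit is that compactness of the resolvent (from the compact embedding $H^1(\alpha_i,\beta_i)\hookrightarrow L^2(\alpha_i,\beta_i)$ on each interval) gives not only infinitely many eigenvalues but also that the spectrum is purely discrete, which is what justifies the assertion that the spectral measure is atomic.
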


	Next, given the self-adjoint extension $D_B$ with boundary matrix $B$, one can define the {\it associated unitary group} $\{U(t)\}_{t\in\br}=\{U_B(t)\}_{t\in\br}$ by
	\begin{equation}
		\label{equb}
		U(t)=\exp(2\pi it D_B),\quad(t\in\br).
	\end{equation}

	 This unitary group acts as translation inside the intervals (see point (ii) in the next theorem), and once it reaches a boundary point, it splits with probabilities defined by the matrix $B$ (point (iii) in the next theorem).

	\begin{theorem}\label{thdju}\cite[Theorem 5.1]{DJ23}
		Let $D_B$ be a self-adjoint extension of the differential operator $D$, with boundary matrix $B$. Let $$U(t)=\exp{(2\pi i t D_B)},\quad (t\in\br),$$ be the associated one-parameter unitary group.
		\begin{enumerate}
			\item The domain $\D_B$ is invariant for $U(t)$ for all $t\in\br$, i.e., if $f\in\mathscr D$ with $B f(\vec\alpha)= f(\vec\beta)$, then $U(t)f\in\mathscr D$ with $B(U(t)f)(\vec\alpha)=(U(t)f)(\vec\beta)$.
			\item Fix $i\in\{1,\dots,n\}$ and let $t\in\br$ such that $(\alpha_i,\beta_i)\cap ((\alpha_i,\beta_i)-t)\neq \ty$. Then, for $f\in L^2(\Omega)$, 
			
			\begin{equation}
				(U(t) f)(x)=f(x+t),\mbox{ for a.e. }x\in (\alpha_i,\beta_i)\cap ((\alpha_i,\beta_i)-t).
				\label{equ1}
			\end{equation}

			In particular,
			\begin{equation}
				(U(\beta_i-x) f)(x)=f(\beta_i),\, (U(\alpha_i-x)f)(x)=f(\alpha_i),\mbox{ for }f\in\D_B, x\in (\alpha_i,\beta_i).
				\label{eq4.2.2}
			\end{equation}
			\item For $f\in\mathscr D_B$, if $x\in (\alpha_i,\beta_i)$ and $t>\beta_i-x$, then
			
			\begin{equation}
				\left[ U(t)f\right](x)=\pi_i\left(B\left[U(t-(\beta_i-x))f\right](\vec\alpha)\right)=\sum_{k=1}^n b_{i,k}[U(t-(\beta_i-x))f](\alpha_i).
				\label{eq4.4.1}
			\end{equation}
			Here $\pi_i:\bc^n\rightarrow\bc$ denotes the projection onto the $i$-th component, i.e., $\pi (x_1,x_2\dots,x_n)=x_i$.
			
		\end{enumerate}
	\end{theorem}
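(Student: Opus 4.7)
The plan is to handle the three parts in order: part (i) follows from general facts about self-adjoint operators and Stone's theorem, part (ii) I would first establish on the dense span of eigenfunctions and then extend by $L^2$ continuity, and part (iii) will drop out of (i) and (ii) combined with the group property and the boundary condition encoded in $B$.

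For (i), I would invoke the standard fact that the domain of a self-adjoint operator is invariant under its associated strongly continuous unitary group. Concretely, for $f\in\mathscr D(D_B)=\mathscr D_B$ and $s\in\br$, the commutation $U(t)U(s)f=U(s)U(t)f$ differentiated at $t=0$ forces $U(s)f\in\mathscr D(D_B)$ with $D_BU(s)f=U(s)D_Bf$, and membership in $\mathscr D_B$ is precisely the boundary condition $B(U(s)f)(\vec\alpha)=(U(s)f)(\vec\beta)$.

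For (ii), since $D_B$ is self-adjoint with atomic spectral measure by Theorem~\ref{th7.1}, the eigenfunctions described in \eqref{eq7.1.1} span $L^2(\Omega)$. For a single eigenfunction $f(x)=e^{2\pi i\lambda x}\sum_{j=1}^n c_j\chi_{(\alpha_j,\beta_j)}(x)$ at eigenvalue $\lambda$, one has $U(t)f=e^{2\pi i\lambda t}f$; and for $x\in(\alpha_i,\beta_i)\cap((\alpha_i,\beta_i)-t)$ both $x$ and $x+t$ lie in $(\alpha_i,\beta_i)$, so
$$f(x+t)=e^{2\pi i\lambda(x+t)}c_i=e^{2\pi i\lambda t}e^{2\pi i\lambda x}c_i=(U(t)f)(x),$$
which is \eqref{equ1} on eigenfunctions. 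For general $f\in L^2(\Omega)$, approximate by finite linear combinations $f_N$ of eigenfunctions; then $U(t)f_N\to U(t)f$ in $L^2(\Omega)$ by unitarity, and the restriction of $f_N(\cdot+t)$ to the overlap strip converges to the restriction of $f(\cdot+t)$, since translation is an isometry of $L^2(\br)$ after extending by zero. Passing to the limit yields \eqref{equ1} in general. Formula \eqref{eq4.2.2} then follows by letting $y\uparrow x$ in $(\alpha_i,x)$: for $f\in\mathscr D_B$, both $f$ and $U(\beta_i-x)f\in\mathscr D_B$ are absolutely continuous on $(\alpha_i,\beta_i)$, so the a.e.\ identity $(U(\beta_i-x)f)(y)=f(y+\beta_i-x)$ from \eqref{equ1} extends to the endpoint and gives $(U(\beta_i-x)f)(x)=f(\beta_i)$; the argument for $\alpha_i$ is symmetric.

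For (iii), given $x\in(\alpha_i,\beta_i)$ and $t>\beta_i-x$, set $s=t-(\beta_i-x)>0$ and $g=U(s)f$. By (i), $g\in\mathscr D_B$, so $g(\vec\beta)=Bg(\vec\alpha)$. Applying the group law $U(t)=U(\beta_i-x)U(s)$ followed by \eqref{eq4.2.2} to $g$,
$$[U(t)f](x)=[U(\beta_i-x)g](x)=g(\beta_i)=(Bg(\vec\alpha))_i=\sum_{k=1}^n b_{i,k}g(\alpha_k)=\sum_{k=1}^n b_{i,k}[U(s)f](\alpha_k),$$
which is \eqref{eq4.4.1}. The main technical hurdle is the density/continuity step in (ii): one has to justify carefully that the pointwise translation identity, initially only verified on eigenfunctions, survives the $L^2$ limit when restricted to the overlap strip, and that the absolute-continuity input needed for the endpoint formulas \eqref{eq4.2.2} is preserved by the flow on $\mathscr D_B$. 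Once (ii) is in place, (i) and (iii) follow formally.
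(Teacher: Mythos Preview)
The paper does not actually prove Theorem~\ref{thdju}; it is stated in the introduction as a recall of results from \cite{DJ15b,DJ23}, so there is no proof in the paper to compare against. Your argument is correct and is essentially the natural one.

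A few remarks. For part~(ii), your eigenfunction approach is clean and exploits exactly the atomic spectral decomposition of Theorem~\ref{th7.1}; the density step you worry about is unproblematic because, for fixed $t$, both $f\mapsto U(t)f$ and $f\mapsto f(\cdot+t)|_{(\alpha_i,\beta_i)\cap((\alpha_i,\beta_i)-t)}$ are bounded linear maps on $L^2$, so the identity passes to the limit. Your argument for \eqref{eq4.2.2} is also right: with $t=\beta_i-x$ fixed, the overlap is $(\alpha_i,x)$, and on it the a.e.\ identity $(U(t)f)(y)=f(y+t)$ holds between two absolutely continuous functions (since $f,U(t)f\in\mathscr D_B$), hence everywhere on $(\alpha_i,x)$; letting $y\to x^-$ gives $(U(t)f)(x)=f(\beta_i-)$, which is $f(\beta_i)$ by definition. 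Part~(iii) then follows exactly as you wrote. (Note the displayed formula \eqref{eq4.4.1} in the paper has a typo: the $\alpha_i$ in the final sum should be $\alpha_k$, which is what your computation correctly produces.)
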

	
	The next question is when is a union of intervals spectral? The condition is that the constants $c_i$ that define the eigenvectors in \eqref{eq7.1.1} are all equal, so that the eigenvectors of $D$ in the domain $\mathscr D_B$ are exponential functions. 
	\begin{theorem}\label{th7.3}\cite[Theorem 6.5, Theorem 6.7]{DJ23}
		The union of intervals $\Omega$ is spectral if and only if there exists an $n\times n$ unitary matrix $B$ with the property that for each $\lambda\in\br$, the equation $BE(\lambda\vec\alpha)c=E(\lambda\vec\beta)c$, $c\in\bc^n$ has either only the trivial solution $c=0$, or only constant solutions of the form $c=a(1,1,\dots,1)$, $a\in\bc$. In this case a spectrum $\Lambda$ of $\Omega$ is the spectrum of the self-adjoint extension $D_B$ associated to the boundary matrix $B$, as in Theorem \ref{th7.1}.
		
		If $\Lambda$ is a spectrum for $\Omega$, then the matrix $B$ is uniquely determined and well-defined by the conditions 
		\begin{equation}
			Be_\lambda(\vec \alpha)=e_\lambda(\vec \beta),\mbox{ for all }\lambda\in\Lambda.
			\label{eqfu4.1}
		\end{equation}
		Moreover 
		\begin{equation}
			\textup{span}\{ e_\lambda(\vec \alpha) : \lambda\in\Lambda\}=\textup{span} \{e_\lambda(\vec\beta):\lambda\in\Lambda\}=\bc^n.
			\label{eqfu4.2}
		\end{equation}
		
		Conversely, if the boundary matrix $B$ has the specified property then a spectrum for $\Omega$ is given by 
		\begin{equation}
			\Lambda=\{\lambda\in\br : Be_\lambda(\vec\alpha)=e_\lambda(\vec\beta)\}.
			\label{eqfu4.3}
		\end{equation}
	\end{theorem}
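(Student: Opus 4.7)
The plan is to use Theorem~\ref{th7.1} as the main bridge: every candidate spectrum should coincide with the spectrum of some self-adjoint extension $D_B$, and the spectrality of $\Omega$ amounts to requiring that all eigenvectors of $D_B$ be honest exponentials rather than piecewise exponentials with varying coefficients.

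\textbf{Sufficiency.} Given a unitary $B$ with the stated property, form the extension $D_B$ from Theorem~\ref{th7.1}. Its spectrum is the discrete set $\Lambda$ in~\eqref{eqfu4.3}, and every eigenvector has the form~\eqref{eq7.1.1}. By hypothesis the only allowed coefficient vectors are constants $c=a(1,\dots,1)$, so each eigenspace is one-dimensional and spanned by $e_\lambda$. Since the spectral measure of $D_B$ is atomic, the spectral theorem yields an orthogonal basis $\{e_\lambda : \lambda\in\Lambda\}$ for $L^2(\Omega)$, i.e.\ $\Lambda$ is a spectrum.

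\textbf{Necessity.} Suppose $\Lambda$ is a spectrum. I would first exploit $\langle e_\lambda,e_\mu\rangle_{L^2(\Omega)}=0$ for $\lambda\ne\mu$: a direct integration rearranges to $\sum_i e^{2\pi i(\lambda-\mu)\beta_i}=\sum_i e^{2\pi i(\lambda-\mu)\alpha_i}$, i.e.\ $\langle e_\lambda(\vec\beta),e_\mu(\vec\beta)\rangle_{\bc^n}=\langle e_\lambda(\vec\alpha),e_\mu(\vec\alpha)\rangle_{\bc^n}$, and each boundary vector has norm $\sqrt{n}$. Hence $e_\lambda(\vec\alpha)\mapsto e_\lambda(\vec\beta)$ extends linearly to a well-defined isometry $B_0:V\to W$, where $V=\textup{span}\{e_\lambda(\vec\alpha)\}$ and $W=\textup{span}\{e_\lambda(\vec\beta)\}$. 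The next and most delicate step is~\eqref{eqfu4.2}. Suppose $v\in V^\perp$, i.e.\ $\sum_i v_i e^{-2\pi i\lambda\alpha_i}=0$ for every $\lambda\in\Lambda$. For $\epsilon>0$ smaller than all $\beta_i-\alpha_i$, the test function $g_\epsilon=\sum_i v_i\chi_{(\alpha_i,\alpha_i+\epsilon)}\in L^2(\Omega)$ satisfies $\langle g_\epsilon,e_\lambda\rangle=c_\lambda(\epsilon)\sum_i v_i e^{-2\pi i\lambda\alpha_i}=0$ for all $\lambda\in\Lambda$, where $c_\lambda(\epsilon)$ is a non-zero elementary factor. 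Completeness of $\{e_\lambda\}$ forces $g_\epsilon\equiv 0$, hence $v=0$; the mirror argument with intervals $(\beta_i-\epsilon,\beta_i)$ gives $W^\perp=0$. Thus $V=W=\bc^n$, and $B_0$ extends uniquely to a unitary $B$ on $\bc^n$ realising~\eqref{eqfu4.1}.

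\textbf{Closing the loop.} With this $B$ in hand, each $e_\lambda$ ($\lambda\in\Lambda$) lies in $\mathscr D_B$ and is an eigenvector of $D_B$, so $\Lambda\subseteq\sigma(D_B)$. Conversely, if $\mu\in\sigma(D_B)$ with eigenvector $f$ of the form~\eqref{eq7.1.1}, self-adjointness of $D_B$ makes $f$ orthogonal to every $e_\lambda$ with $\lambda\ne\mu$; since $\{e_\lambda:\lambda\in\Lambda\}$ is an orthogonal basis, this forces $\mu\in\Lambda$ and $f\in\bc e_\mu$. So the eigenspaces are one-dimensional and generated only by constant-$c$ solutions, yielding both the constant-$c$ property for $B$ and the identification $\Lambda=\sigma(D_B)$ that matches~\eqref{eqfu4.3}; uniqueness of $B$ under~\eqref{eqfu4.1} is immediate from~\eqref{eqfu4.2}. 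The main obstacle is precisely the span identity~\eqref{eqfu4.2}: it is where completeness of the exponential basis has to be cashed into a purely algebraic statement about the boundary vectors, and choosing the correct test function $g_\epsilon$ so that the integral cleanly factors through the desired linear relation is the key technical move.
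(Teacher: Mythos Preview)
The paper does not actually prove Theorem~\ref{th7.3}: it is stated in the Introduction as one of the results \emph{recalled} from \cite{DJ15b,DJ23}, alongside Theorems~\ref{th7.1}, \ref{thdju} and \ref{th7.4}, and no proof is given anywhere in the present paper. So there is no ``paper's own proof'' to compare your proposal against.

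That said, your argument is correct and is essentially the standard one. The sufficiency direction is immediate from Theorem~\ref{th7.1} once one observes that the atomic spectral resolution of $D_B$ has one-dimensional eigenspaces spanned by genuine exponentials. In the necessity direction your three steps are the right ones: (i) the Gram identity $\langle e_\lambda(\vec\beta),e_\mu(\vec\beta)\rangle=\langle e_\lambda(\vec\alpha),e_\mu(\vec\alpha)\rangle$ from orthogonality in $L^2(\Omega)$ gives a well-defined isometry on the span; (ii) the test-function computation with $g_\epsilon=\sum_i v_i\chi_{(\alpha_i,\alpha_i+\epsilon)}$ correctly factors $\langle g_\epsilon,e_\lambda\rangle$ as a nonzero scalar times $\sum_i v_ie^{-2\pi i\lambda\alpha_i}$, and completeness of $\{e_\lambda\}$ forces $v=0$, yielding \eqref{eqfu4.2}; (iii) the final identification $\Lambda=\sigma(D_B)$ and the constant-$c$ property follow cleanly from self-adjointness plus completeness, since any eigenvector for $\mu\notin\Lambda$ would be orthogonal to the whole basis. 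One point worth making explicit in your write-up: the constant-$c$ condition in the theorem is required for \emph{every} $\lambda\in\br$, not only $\lambda\in\Lambda$; for $\lambda\notin\Lambda=\sigma(D_B)$ the kernel is trivial by the very description of $\sigma(D_B)$ in Theorem~\ref{th7.1}, so this case is automatic, but it should be said.
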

	
One of the nice properties of the unitary group $\{U(t)\}$, that we will use in this paper, is that, when the set $\Omega$ is spectral, the unitary group $\{U(t)\}$ acts as translations not only inside intervals, but also when it jumps from one interval to another. More precisely, for any $t\in\br$, and $f\in L^2(\Omega)$,
	$$(U(t)f)(x)=f(x+t)\mbox{ for a.e. }x\in\Omega\cap(\Omega-t).$$
	In other words $(U(t)f)(x)=f(x+t)$ when both $x$ and $x+t$ are in $\Omega$. We call such a unitary group, a {\it group of local translations}.

	\begin{theorem}\label{th7.4}\cite[Theorem 6.8]{DJ23}
		Assume that $\Omega$ is spectral with spectrum $\Lambda$. Then the unitary group $U=U_\Lambda$ associated to $\Lambda$ is a unitary group of local translations. 
		Conversely, if there exists a strongly continuous unitary group of local translations $(U(t))_{t\in\br}$ for $\Omega$, then $\Omega$ is spectral. 
	\end{theorem}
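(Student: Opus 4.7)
The plan is to apply Theorem \ref{th7.3} to the given spectrum $\Lambda$, producing a boundary matrix $B$ for which every $e_\lambda$, $\lambda\in\Lambda$, lies in $\mathscr D_B$ and is an eigenfunction of $D_B$ with eigenvalue $\lambda$. The associated unitary group then acts diagonally on this orthogonal basis: $U_\Lambda(t)e_\lambda=e^{2\pi it\lambda}e_\lambda=e_\lambda(\,\cdot\,+t)$ pointwise on $\br$, so the local translation identity $(U_\Lambda(t)f)(x)=f(x+t)$ is automatic on the dense subspace of finite linear combinations of the $e_\lambda$. To upgrade this to arbitrary $f\in L^2(\Omega)$, I would introduce the ``partial translation'' operator $T_tf(x)=f(x+t)$, viewed as a bounded map $L^2(\Omega)\to L^2(\Omega\cap(\Omega-t))$ of norm at most one, and pass to $L^2$-limits using continuity of $U_\Lambda(t)$ and of restriction to $\Omega\cap(\Omega-t)$.

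\textbf{Converse, showing the generator extends $D$.} By Stone's theorem, $U(t)=\exp(2\pi itH)$ for some self-adjoint $H$ on $L^2(\Omega)$. The key step will be to verify that $H$ extends $D$ on $C_0^\infty(\Omega)$. For $f\in C_0^\infty(\Omega)$ I would choose $\delta>0$ with $\dist(\supp f,\partial\Omega)>\delta$; then for $|t|<\delta$ the local translation property gives $(U(t)f)(x)=f(x+t)$ for a.e.\ $x\in\Omega\cap(\Omega-t)$. The main technical point is upgrading this to an equality in $L^2(\Omega)$: since $\supp f\subset\Omega\cap(\Omega+t)$, a change of variables yields
\[
  \|U(t)f\|_{L^2(\Omega\cap(\Omega-t))}^2 = \|f(\,\cdot\,+t)\|_{L^2(\Omega\cap(\Omega-t))}^2 = \|f\|^2,
\]
which matches $\|U(t)f\|_{L^2(\Omega)}^2=\|f\|^2$ by unitarity, forcing $U(t)f$ to vanish on $\Omega\setminus(\Omega-t)$; the translate $f(\,\cdot\,+t)$ (extended by zero outside $\Omega$) vanishes there too. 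Hence $U(t)f=f(\,\cdot\,+t)$ in $L^2(\Omega)$ for $|t|<\delta$, and a Taylor expansion then gives $\lim_{t\to 0}\tfrac{U(t)f-f}{2\pi it}=Df$ in $L^2(\Omega)$, so $f\in\mathscr D(H)$ with $Hf=Df$.

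\textbf{Converse, concluding spectrality.} Theorem \ref{th7.1} now identifies $H$ as some $D_B$. To conclude via Theorem \ref{th7.3}, I need to show that for every $\lambda\in\sigma(D_B)$ the equation $BE(\lambda\vec\alpha)c=E(\lambda\vec\beta)c$ admits only constant solutions. Fixing such $\lambda$ and writing the corresponding eigenfunction as $f(x)=e^{2\pi i\lambda x}\sum_i c_i\chi_{(\alpha_i,\beta_i)}(x)$, the identity $U(t)f=e^{2\pi it\lambda}f$ combined with local translation yields $c_i=c_j$ on every positive-measure subset of $\{x\in(\alpha_i,\beta_i):x+t\in(\alpha_j,\beta_j)\}$; choosing $t$ close to $\alpha_j-\alpha_i$ guarantees such a subset for every pair $i,j$, so $c$ must be constant. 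The main obstacle in the whole argument will be the boundary step in the converse: converting the a.e.\ identity on the strict subset $\Omega\cap(\Omega-t)$ into a global equality in $L^2(\Omega)$ before differentiating relies essentially on the unitarity of $U(t)$ to rule out any ``leakage'' of $U(t)f$ onto $\Omega\setminus(\Omega-t)$.
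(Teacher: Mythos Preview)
Your argument is correct. Note, however, that in this paper Theorem~\ref{th7.4} is not proved at all: it is one of the results recalled from \cite{DJ15b,DJ23} in the introduction, so there is no in-paper proof to compare against. What you have written is a clean self-contained proof built on the other recalled results (Theorems~\ref{th7.1} and \ref{th7.3}), which is exactly in the spirit of how the original references establish it. The forward direction via the diagonal action $U_\Lambda(t)e_\lambda=e^{2\pi it\lambda}e_\lambda=e_\lambda(\cdot+t)$ together with the bounded partial-translation operator $T_t:L^2(\Omega)\to L^2(\Omega\cap(\Omega-t))$ is the standard route; your density argument is correctly set up. In the converse, the norm-matching step
\[
\|U(t)f\|_{L^2(\Omega\cap(\Omega-t))}^2=\int_{(\Omega+t)\cap\Omega}|f|^2=\|f\|^2=\|U(t)f\|_{L^2(\Omega)}^2
\]
(valid because $\supp f\subset\Omega\cap(\Omega+t)$ for $|t|<\delta$) is exactly the right device to kill the leakage onto $\Omega\setminus(\Omega-t)$, and once $U(t)f=f(\cdot+t)$ holds in $L^2(\Omega)$ for small $t$, the identification $H\supset D|_{C_0^\infty(\Omega)}$ follows immediately, hence $H=D_B$ by Theorem~\ref{th7.1}. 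Your final step---using $U(t)f=e^{2\pi it\lambda}f$ for the piecewise-exponential eigenfunction and the local-translation identity at a single well-chosen $t$ (e.g.\ $t=\alpha_j-\alpha_i$, for which $(\alpha_i,\beta_i)\cap((\alpha_j,\beta_j)-t)$ is a nontrivial interval) to force $c_i=c_j$---is also correct and gives the hypothesis of Theorem~\ref{th7.3}.
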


	{\bf Notations.} Throughout the paper $\Omega$ is the union of intervals in \eqref{eqom}. We denote by $l_i=\beta_i-\alpha_i$, the length of the interval $(\alpha_i,\beta_i)$ of $\Omega$, $1\leq i\leq n$. We denote by $\mathfrak m$ the Lebesgue measure on $\br$. 
	
	If a unitary boundary matrix  $B$ is given, $D_B$ represents the self-adjoint extension of $D$ with domain $\mathscr D_B$, as in Theorem \ref{th7.1} and $\{U(t)\}_{t\in\br}$ is the unitary group associated to it as in \eqref{equb}. $\Lambda$ denotes the spectrum of $D_B$ as in Theorem \ref{th7.1}. 
	
	In the case when $\Omega$ is spectral, $\Lambda$ will denote its spectrum, and $B$ is the unitary boundary matrix associated to it as in Theorem \ref{th7.3}.

	\medskip
	
	{\bf New Contributions.} 	The paper is organized as follows: in Section \ref{sec2}, we answer a question posed in \cite[Section 6 - ``Questions'']{JPT13}, by presenting an explicit formula for the unitary group $\{U(t)\}$ (Theorem \ref{thu2}).

	In Section 3, we study the case when $\Omega$ is a spectral set and we use the formula for $\{U(t)\}$ to determine some geometric properties of $\Omega$. The central result of Section 3 is Theorem \ref{thl1}, which states (among other things), that, for spectral sets, the gaps between the intervals of $\Omega$ have to be sums of lengths of the intervals of $\Omega$. The formulas in Theorem \ref{thl1}(ii) provide some connections between the geometry of the set $\Omega$, encoded in the lengths of its intervals and of its gaps, and the spectral data of the set $\Omega$, encoded in the boundary matrix $B$. 
	
	In Section \ref{sec4}, we consider a restriction that was suggested by Segal to Fuglede: what if the unitary group $\{U(t)\}$ is multiplicative (Definition \ref{defm1})? Fuglede showed in \cite{Fug74}, that (even in higher dimension) when $\Omega$ is connected and spectral and the unitary group $\{U(t)\}$ is multiplicative, then $\Omega$ has spectrum a lattice and it also tiles $\br^d$ by a lattice. We show here that similar statements hold for $\Omega$ a union of intervals: $\{U(t)\}$ is multiplicative if and only if $B$ is a permutation matrix (Theorem \ref{thm4}); in the case when $\Omega$ is spectral and $\{U(t)\}$ is multiplicative, the permutation is a full cycle, $\Omega$ tiles with $\mathfrak m(\Omega)\bz$, and one can read from the permutation, how the set $\Omega$ is translation congruent to an interval with the same spectrum (Theorem \ref{thm5}). At the end of Section \ref{sec4} we prove similar results, when the multiplicative condition is replaced by the Forelli condition (Definition \ref{defo}).

	\section{The formula for $U(t)$}\label{sec2}

	We begin with a simple proposition which shows how our elements are modified when one applies the reflection $t\to-t$. This will help us reduce problems to the case $t$ positive, as we will describe a little later in Remark \ref{reminu}.
	
	\begin{proposition}\label{prminu}
		
		Let $D_B$ and $\{U_B(t)\}$ be the self-adjoint extension and the unitary group associated to the set $\Omega$ and the boundary matrix $B$,  and 	let $D_{B^*}$ and $\{U_{B^*}(t)\}$ be the self-adjoint extension and the unitary group associated to the set $-\Omega$ and the boundary matrix $B^*$. Let $J:L^2(\Omega)\to L^2(-\Omega)$, be the unitary operator
		$$Jf(x)=f(-x),\quad (f\in L^2(\Omega), x\in -\Omega).$$
		\begin{enumerate}
			\item $JD_B=-D_{B^*}J$ and $J\mathscr D(D_B)=\mathscr D(D_{B^*})$. 
			\item $U_{B^*}(t)=JU_B(-t)J^*$ for all $t\in\br$. 
			\item $\{U_B(t)\}$ is a group of local translations if and only if $\{U_{B^*}(t)\}$ is. 
		\end{enumerate}
	
	\end{proposition}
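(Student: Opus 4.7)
The plan is to establish the three parts in order: the heart of the argument is the algebraic intertwining identity in (i), from which (ii) follows by functional calculus, and (iii) by unwinding the formula in (ii).

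For (i), I first observe that $J$ maps the maximal domain $\mathscr{D}$ on $\Omega$ onto the maximal domain on $-\Omega$, since absolute continuity on each interval and $L^{2}$-integrability of the derivative are preserved under $f\mapsto f(-\cdot)$. A direct differentiation gives $\frac{1}{2\pi i}(Jf)'(x)=-\frac{1}{2\pi i}f'(-x)$, so $JD=-DJ$ at the level of maximal operators. To compare boundary conditions, I label the intervals of $-\Omega$ as $(-\beta_i,-\alpha_i)$, so the left endpoints of $-\Omega$ form the vector $-\vec\beta$ and the right endpoints form $-\vec\alpha$; the boundary values of $g=Jf$ are then $g(-\vec\beta)=f(\vec\beta)$ and $g(-\vec\alpha)=f(\vec\alpha)$. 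Hence the condition $B^{*}g(-\vec\beta)=g(-\vec\alpha)$ defining $\mathscr{D}(D_{B^{*}})$ reads $B^{*}f(\vec\beta)=f(\vec\alpha)$, which, by unitarity of $B$, is equivalent to $Bf(\vec\alpha)=f(\vec\beta)$. This shows $J\mathscr{D}(D_B)=\mathscr{D}(D_{B^{*}})$ and, combined with the differential identity, yields $JD_B=-D_{B^{*}}J$.

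For (ii), from (i) we have $-D_{B^{*}}=JD_BJ^{*}$, and since $J$ is unitary the Borel functional calculus gives
\[
U_{B^{*}}(t)=\exp(2\pi i t D_{B^{*}})=\exp(-2\pi i t\, JD_BJ^{*})=J\exp(-2\pi i t D_B)J^{*}=JU_B(-t)J^{*}.
\]
For (iii), assume $\{U_B(t)\}$ is a group of local translations. Given $g\in L^{2}(-\Omega)$ and $y\in(-\Omega)\cap((-\Omega)-t)$, I compute $(U_{B^{*}}(t)g)(y)=(U_B(-t)J^{*}g)(-y)$; since $-y\in\Omega\cap(\Omega+t)$, the local translation property of $U_B$ applied with step $-t$ yields $(U_B(-t)J^{*}g)(-y)=(J^{*}g)(-y-t)=g(y+t)$, which is the local translation property for $U_{B^{*}}$ at $y$. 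The converse follows by the same argument after interchanging $(\Omega,B)$ with $(-\Omega,B^{*})$, since the construction is involutive: $-(-\Omega)=\Omega$ and $(B^{*})^{*}=B$.

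The only subtle point is the bookkeeping in (i): one must use the indexing $(-\beta_i,-\alpha_i)$ for the intervals of $-\Omega$, under which $J$ swaps the left endpoint vector with the right endpoint vector without reordering. With the alternative left-to-right indexing the boundary matrix for $-\Omega$ would be a permutation conjugate of $B^{*}$; the chosen convention removes this conjugation and delivers exactly $B^{*}$ as stated.
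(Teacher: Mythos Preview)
Your proof is correct and follows essentially the same approach as the paper: you verify the intertwining $JD_B=-D_{B^*}J$ via the chain rule and the boundary-condition equivalence $B^*f(\vec\beta)=f(\vec\alpha)\Leftrightarrow Bf(\vec\alpha)=f(\vec\beta)$, then deduce (ii) by functional calculus and (iii) by direct computation. Your explicit remark on the indexing convention for the intervals of $-\Omega$ is a welcome clarification, but otherwise the argument matches the paper's proof step for step.
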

	
	\begin{proof}
It is clear that $J$ is unitary and $J^*f(x)=f(-x)$ for $f\in L^2(-\Omega)$, $x\in\Omega$. 

	For (i), a function $f\in L^2(-\Omega)$ is absolutely continuous with derivative in $L^2(-\Omega)$ if and only if $J^*f$ has the same properties relative to $\Omega$. We check the boundary conditions: $B^*f(-\vec\beta)=f(-\vec\alpha)$ iff $B^*J^*f(\vec\beta)=J^*f(\vec\alpha)$ iff $BJ^*f(\vec\alpha)=J^*f(\vec\beta)$. Thus $\mathscr D(D_{B^*})=J\mathscr{D}(D_B)$. 
	
	For $f\in\mathscr D(D_B)$, $x\in- \Omega$ 
	$$D_{B^*}Jf(x)=\frac1{2\pi i}(f(-x))'=-\frac1{2\pi i}f'(-x)=-JD_Bf(x),$$
	and (i) follows. 
	
	For (ii), since $D_{B^*}=J(-D_B)J^*$, we obtain, by functional calculus, 
	$$U_{B^*}(t)=\exp(2\pi it D_{B^*})=J\exp(-2\pi itD_B)J^*=JU_B(-t)J^*.$$
	
	For (iii), suppose $\{U_B(t)\}$ is a group of local translations and let $x\in-\Omega$ and $t\in\br$ such that $x+t\in-\Omega$ and let $f\in L^2(-\Omega)$. Then $-x$ and $-x-t$ are in $\Omega$ so $U_B(-t)J^*f(-x)=J^*f(-x-t)$; this can be rewritten as $J^*U_{B^*}(t)f(-x)=J^*f(-x-t)$ so $U_{B^*}(t)f(x)=f(x+t)$. Thus $\{U_{B^*}(t)\}$ is a group of local translations. The converse is analogous. 
	\end{proof}
	
	\begin{remark}\label{reminu}
		Proposition \ref{prminu} can be used to deduce results for $U_B(t)$ with $t$ negative, once we have the results for $U_B(t)$ with $t$ positive. The idea is to take $t$ negative and apply the results for the set $-\Omega$, the boundary matrix $B^*$ and $-t$; then we obtain some results or formulas for $U_{B^*}(-t)$; but, with Proposition \ref{prminu}(ii), we obtain results/formulas for $U_B(t)$. 
		
	\end{remark}

In the case when all the lengths of the intervals of $\Omega$ are the same, we can also say a bit more about the self-adjoint extension $D_B$, and about the matrix $B$ when $\Omega$ is spectral. We include this in the next proposition.
\begin{proposition}\label{prm1}
	Suppose the intervals of $\Omega$ have the same length $l_k={\ell}$ for all $1\leq k\leq n$. Let $B$ be a unitary boundary matrix and let $D_B$ be the self-adjoint extension of $D$ associated to the matrix $B$. 
	
	\begin{enumerate}
		\item A point $\lambda$ is in the spectrum $\sigma(D_B)$ of $D_B$ if and only if $e^{2\pi i\lambda {\ell}}$ is in the spectrum $\sigma(B)$ of $B$. Therefore, if $\sigma(B)=\{e^{2\pi i\theta_1},\dots, e^{2\pi i\theta_n}\}$ then 
		\begin{equation}
			\label{eqm1.1}
			\Lambda:=\sigma(D_B)=\frac1{\ell}\left(\left\{\theta_1,\dots,\theta_n\right\}+\bz\right).
		\end{equation}
		\item For a vector $\vec c\in\bc^n$, $BE(\lambda\vec\alpha)\vec c=E(\lambda\vec\beta)\vec c$ if and only if $E(\lambda\vec\alpha)\vec c$ is an eigenvector for $B$ with eigenvalue $e^{2\pi i\lambda {\ell}}$. 
		\item Assume in addition that $B$ is spectral. Then all the eigenvalues of $B$ have multiplicity one and, if $e^{2\pi i \lambda {\ell}}$, $\lambda\in \br$ is an eigenvalue for $B$, then its eigenvector (unique up to a multiplicative constant) is $(e^{2\pi i\lambda\alpha_1},\dots,e^{2\pi i\lambda\alpha_n})^T$.
	\end{enumerate}
	
\end{proposition}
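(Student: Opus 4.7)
The plan is to exploit the single identity that drives everything: when all intervals have the common length $\ell$, we have $\beta_k=\alpha_k+\ell$ for each $k$, so
$$E(\lambda\vec\beta)=e^{2\pi i\lambda\ell}\,E(\lambda\vec\alpha).$$
Once this is in hand, each of the three parts is essentially a direct substitution.

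For part (ii), I would substitute the identity into the boundary equation $BE(\lambda\vec\alpha)\vec c=E(\lambda\vec\beta)\vec c$ and immediately rewrite it as
$$B\bigl(E(\lambda\vec\alpha)\vec c\bigr)=e^{2\pi i\lambda\ell}\bigl(E(\lambda\vec\alpha)\vec c\bigr),$$
which is exactly the eigenvalue equation for $B$ on the vector $E(\lambda\vec\alpha)\vec c$. For part (i), I would plug the identity into the determinant description of the spectrum from Theorem~\ref{th7.1}:
$$\det\bigl(I-E(\lambda\vec\beta)^{-1}BE(\lambda\vec\alpha)\bigr)=\det\bigl(I-e^{-2\pi i\lambda\ell}E(\lambda\vec\alpha)^{-1}BE(\lambda\vec\alpha)\bigr).$$
Since $E(\lambda\vec\alpha)^{-1}BE(\lambda\vec\alpha)$ is similar to $B$, the matrix inside is similar to $e^{-2\pi i\lambda\ell}B$, so its spectrum equals $e^{-2\pi i\lambda\ell}\sigma(B)$. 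Therefore $0$ is the determinant value iff $1\in e^{-2\pi i\lambda\ell}\sigma(B)$, i.e.\ iff $e^{2\pi i\lambda\ell}\in\sigma(B)$. The explicit formula \eqref{eqm1.1} then follows by writing $\sigma(B)=\{e^{2\pi i\theta_k}\}_{k=1}^n$ and solving $e^{2\pi i\lambda\ell}=e^{2\pi i\theta_k}$ modulo $\bz$ for $\lambda$, giving $\lambda\in\frac1{\ell}(\{\theta_1,\ldots,\theta_n\}+\bz)$.

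For part (iii), I would invoke Theorem~\ref{th7.3}: the spectral hypothesis means that for every $\lambda\in\br$, the set of $\vec c$ satisfying $BE(\lambda\vec\alpha)\vec c=E(\lambda\vec\beta)\vec c$ is either $\{0\}$ or the line $\bc\cdot(1,1,\ldots,1)^T$. Combined with (ii), the eigenspace $V_\lambda$ of $B$ for the eigenvalue $e^{2\pi i\lambda\ell}$ is the image under $E(\lambda\vec\alpha)$ of that solution set; so whenever it is nontrivial we get
$$V_\lambda=E(\lambda\vec\alpha)\cdot\bc(1,\ldots,1)^T=\bc\cdot(e^{2\pi i\lambda\alpha_1},\ldots,e^{2\pi i\lambda\alpha_n})^T,$$
which is one-dimensional, yielding both the simplicity of the eigenvalue and the claimed form of the eigenvector.

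Honestly, no step here is a serious obstacle; the whole proposition is a bookkeeping consequence of the equal-length identity. The only mild subtlety to be careful about in (iii) is that the claim is stated for an \emph{arbitrary} $\lambda\in\br$ with $e^{2\pi i\lambda\ell}\in\sigma(B)$, not just for $\lambda\in\Lambda$. This is not actually an issue: since $B$ is unitary every eigenvalue has modulus one, hence is of the form $e^{2\pi i\lambda\ell}$ for some real $\lambda$, and any two such $\lambda$'s differ by an element of $\frac1{\ell}\bz$, so by part (i) at least one of them lies in $\Lambda$ and the preceding argument applies to all of them simultaneously (the eigenspace $V_\lambda$ depends only on the eigenvalue $e^{2\pi i\lambda\ell}$).
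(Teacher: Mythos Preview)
Your proposal is correct and follows essentially the same approach as the paper: both proofs rest on the identity $E(\lambda\vec\beta)=e^{2\pi i\lambda\ell}E(\lambda\vec\alpha)$ and then invoke Theorems~\ref{th7.1} and~\ref{th7.3}. The only cosmetic difference is that for part~(i) the paper uses the eigenvector formulation of $\sigma(D_B)$ from Theorem~\ref{th7.1} directly (existence of nonzero $\vec c$), whereas you route through the determinant formulation and a similarity argument; these are equivalent and equally short.
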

\begin{proof}
	Since the intervals have equal length ${\ell}$, we have $\beta_k=\alpha_k+{\ell}$, for all $1\leq k\leq n$. By Theorem \ref{th7.1}, we have that $\lambda\in\sigma(D_B)$ if and only if there is a nonzero vector $\vec c\in\bc^n$ such that $BE(\lambda\vec \alpha)\vec c=E(\lambda\vec\beta)\vec c$. But $E(\lambda\vec\beta)=e^{2\pi i\lambda {\ell}}E(\lambda\vec\alpha)$, therefore we can rewrite the condition as  $BE(\lambda\vec \alpha)\vec c=e^{2\pi i\lambda {\ell}}E(\lambda\vec \alpha)\vec c$. This means that $E(\lambda\vec \alpha)\vec c$ should be an eigenvector for $B$ with eigenvalue $e^{2\pi i\lambda {\ell}}$. This proves (i) and (ii). 
	
	Assume now, in addition, that $B$ is spectral. Let $\vec x\neq 0$ be an eigenvector for $B$ with eigenvalue $e^{2\pi i\lambda {\ell}}$, $\lambda\in\br$. Then, using (ii), $\vec c:=E(-\lambda\vec\alpha)\vec x$ satisfies the equation  $BE(\lambda\vec\alpha)\vec c=E(\lambda\vec\beta)\vec c$. Since $\Omega$ is spectral, with Theorem \ref{th7.3}, $\vec c$ has to have equal entries $c$. Therefore $x=E(\lambda\vec\alpha)\vec c=c\left(e^{2\pi i\lambda\alpha_1},\dots, e^{2\pi i\lambda\alpha_n}\right)$. 
\end{proof}

Next, we will present the formula for the unitary group $\{U(t)\}$ associated to the union of intervals $\Omega$ and the unitary boundary matrix $B$. Before that, we want to give the reader some intuition. Imagine a point $x$ in $\Omega$, say in the interval $(\alpha_i,\beta_i)$, at time $t=0$; we denote $i_1=i$. It has to travel for $T>0$ seconds total. It starts to move to the right, inside the interval $(\alpha_i,\beta_i)$, after $t$ seconds it is at the point $x+t$, until it reaches the endpoint $\beta_i$ (if $T>\beta_i-x$); at that moment, $\beta_i-x$ seconds have past. Once it goes trough $\beta_i$ it splits into $n$ points, one in each interval $i_2$, $1\leq i_2\leq n$, at the left-endpoint $\alpha_{i_2}$, with a weight $b_{i_1,i_2}$; the numbers $|b_{i_1,i_2}|^2$, $1\leq i_2\leq n$ can be thought of as probabilities for the point at $\beta_{i_1}$ to jump into the point at $\alpha_{i_2}$, since $$\sum_{i_2=1}^n |b_{i_1,i_2}|^2=1.$$
Now each point continues the journey from $\alpha_{i_2}$. There are $T-(\beta_i-x)$ seconds left until time expires. If $T-(\beta_i-x)$ is less than the length of the interval $l_{i_2}=\beta_{i_2}-\alpha_{i_2}$, then the point will stop at the point $\alpha_{i_2}+T-(\beta_i-x)$, and we denote $T-(\beta_i-x)=r(x,t,i_1i_2)$, and we have $0\leq r(x,t,i_2)<l_{i_2}$. It will have weight $b_{i_1,i_2}$. 

If $T-(\beta_i-x)\geq l_{i_2}$, then the point reaches $\beta_{i_2}$, after an extra $l_{i_2}$ seconds and it splits again into $n$ points at $\alpha_{i_3}$, $1\leq i_3\leq n$, with weights $b_{i_2,i_3}$, so the combined weight will be $b_{i_1,i_2}b_{i_2,i_3}$. At the moment of the second split $(\beta_i-x)+l_{i_2}$ seconds have passed, and so $T-((\beta_i-x)+l_{i_2})$ seconds are remaining.

The process continues until the time expires. Thus, one has to split the interval of time $T$ into pieces: the first piece is $\beta_i-x$ required for the point to get out of the interval $(\alpha_i,\beta_i)$, then the next pieces will be the lengths of the intervals: $l_{i_2}$, $l_{i_3}$, \dots, $l_{i_{m-1}}$ so that $T>(\beta_i-x)+l_{i_2}+l_{i_3}+\dots +l_{i_{m-1}}$. The last interval has length $l_{i_m}$ smaller than the remainder of time $r:=T-((\beta_i-x)+l_{i_2}+l_{i_3}+\dots +l_{i_{m-1}})$. So, on this path, the point stops at $\alpha_{i_m}+r$ in the interval $(\alpha_{i_m},\beta_{i_m})$. It will carry the weight $b_\omega=b_{i_1,i_2}b_{i_2,i_3}\dots b_{i_{m-1},i_m}$.

With this intuition, we construct the following definition:

\begin{definition}\label{defu1}{\bf [$t$ positive]}
	Let $x\in \Omega$, $x\in (\alpha_i,\beta_i)$ and $t>0$. An {\it admissible path} for $x$ and $t$ is a sequence of indices $\omega=i_1i_2\dots i_m$, $i_k\in\{1,\dots,n\}$ for all $1\leq k\leq m$, with the following properties:
	\begin{enumerate}
		\item $i_1=i$. 
		\item If $t<\beta_i-x$ then $m=1$
		\item If $t\geq \beta_i-x$ then $m\geq 2$ and 
		$$(\beta_i-x)+l_{i_2}+l_{i_3}+\dots+l_{i_{m-1}}\leq t<(\beta_i-x)+l_{i_2}+l_{i_3}+\dots+l_{i_{m-1}}+l_{i_m}.$$
	\end{enumerate}
	If $t>\beta_i-x$, we denote by 
	\begin{equation}
		\label{equ1.1}
		r(x,t,\omega):=t-\left((\beta_i-x)+l_{i_2}+l_{i_3}+\dots+l_{i_{m-1}}\right),
	\end{equation}
	and note that $0\leq r(x,t,\omega)<l_{i_m}$. We call $r(x,t,\omega)$ the {\it remainder} of the path $\omega$. We call $m$ the {\it length} of $\omega$. 
	Also, we denote by 
	\begin{equation}
		\label{equ1.2}
		b_{\omega}:=b_{i_1,i_2}b_{i_2,i_3}\dots b_{i_{m-1},i_m}.
	\end{equation}

	We call $\alpha_{i_m}+r(x,t,\omega)$ the {\it end} of the path $\omega$ (starting at $x$), and we denote it by $\End(x,t,\omega)$.

	Let $\mathcal P_{x,t}$ be the set of all admissible paths for $x$ and $t$, and let $\Ends(x,t)$ be the set of all ends of the admissible paths for $x$ and $t$, 
	$$\Ends(x,t)=\{\End(x,t,\omega): \omega\in\mathcal P_{x,t}\}.$$
	
\end{definition}

\begin{theorem}\label{thu2}
Let $\{U(t)\}$ be the unitary group associated to the union of intervals $\Omega$ and boundary matrix $B$.	Let $f\in\mathscr D_B$ and $t>0$ . Let $x\in\Omega$, $x\in (\alpha_i,\beta_i)$. 
	
	If $t<\beta_i-x$, then 
	\begin{equation}
		\label{equ2.1}
		[U(t)f](x)=f(x+t).
	\end{equation}
	If $t\geq\beta_i-x$,	
	\begin{equation}\label{equ2.2}
			[U(t)f](x)=\sum_{\omega=i_1\dots i_m\in\mathcal P_{x,t}}b_\omega f(\alpha_{i_m}+r(x,t,\omega))=\sum_{\omega\in\mathcal P_{x,t}}b_\omega f(\End(x,t,\omega)).
	\end{equation}
 If $f\in L^2(\Omega)$ and $t>0$ is fixed, then the formulas above hold for almost every $x\in\Omega$.
\end{theorem}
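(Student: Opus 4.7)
The plan is to prove the formula first for $f \in \mathscr D_B$ by an induction that iterates the one-step recursion of Theorem \ref{thdju}(iii), and then to extend to $L^2(\Omega)$ by a density/boundedness argument.

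For $f \in \mathscr D_B$, the function $U(s)f$ lies in $\mathscr D_B$ for every $s$, so it is absolutely continuous on each subinterval and the boundary values $[U(s)f](\alpha_k)$ are well defined as right-limits. I would induct on the quantity $M(x,t) := \max\{|\omega| : \omega \in \mathcal P_{x,t}\}$, which is bounded by $2 + t/\lmin$ (with $\lmin := \min_i l_i$), since any admissible path of length $m \geq 2$ forces $(m-2)\lmin \leq t$. The base case $M(x, t) = 1$ corresponds exactly to $t < \beta_i - x$: the unique admissible path is $(i)$, with $b_{(i)} = 1$ (empty product) and $\End(x, t, (i)) = x + t$, so \eqref{equ2.2} reduces to \eqref{equ1} of Theorem \ref{thdju}(ii). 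For $M(x, t) \geq 2$, set $s := t - (\beta_i - x) \geq 0$ and apply Theorem \ref{thdju}(iii) to write
\[ [U(t)f](x) = \sum_{k=1}^n b_{i,k}\,[U(s)f](\alpha_k). \]
Each $[U(s)f](\alpha_k)$ is then computed by the inductive hypothesis applied at points $y \in (\alpha_k, \beta_k)$ (where $M(y, s) < M(x, t)$), followed by the limit $y \to \alpha_k^+$. The key combinatorial identity is that $\omega = i_1 i_2 \dots i_m \mapsto \omega^* := i_2 i_3 \dots i_m$ is a bijection between $\{\omega \in \mathcal P_{x,t} : i_2 = k\}$ and $\mathcal P_{\alpha_k, s}$, under which $b_\omega = b_{i,k}\,b_{\omega^*}$ and $\End(x,t,\omega) = \End(\alpha_k, s, \omega^*)$; this last identity follows from $r(x,t,\omega) = s - (l_{i_2}+\cdots+l_{i_{m-1}}) = r(\alpha_k, s, \omega^*)$. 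Substituting and summing over $k$ yields \eqref{equ2.2}.

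For the final a.e.\ assertion when $f \in L^2(\Omega)$, note that for fixed $t > 0$ the admissibility inequalities of Definition \ref{defu1} partition $\Omega$ into finitely many subintervals on each of which $\mathcal P_{x,t}$ is constant and $\End(x,t,\omega) = x + c_\omega$ for a constant $c_\omega \in \br$ depending only on $\omega$. The right-hand side of \eqref{equ2.2} therefore defines a bounded operator $T_t$ on $L^2(\Omega)$ which is a finite linear combination of localized translations. Since $T_t$ agrees with the bounded operator $U(t)$ on the dense subspace $\mathscr D_B \subset L^2(\Omega)$, the two coincide on all of $L^2(\Omega)$, giving \eqref{equ2.2} a.e.

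The main obstacle I expect is the careful bookkeeping in the path bijection together with the treatment of the boundary evaluation $[U(s)f](\alpha_k)$ at exceptional values of $s$ for which an admissibility inequality degenerates into equality; these exceptional configurations lie on a measure-zero set of $(x,t)$ and can be absorbed using the strong continuity of $s \mapsto U(s)f$ and the absolute continuity of $U(s)f$ on each component of $\Omega$.
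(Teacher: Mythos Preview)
Your proposal is correct but organizes the induction differently from the paper. The paper fixes a step size $0<\epsilon<\lmin$ and inducts on $k=\lfloor t/\epsilon\rfloor$, using the factorization $U(t)=U(\epsilon)U(t-\epsilon)$; at each step one either stays in the same interval (case $\epsilon<\beta_i-x$) or crosses exactly one boundary (case $\epsilon\geq\beta_i-x$), and in both cases the new evaluation point is strictly interior to some $(\alpha_j,\beta_j)$, so no boundary limits are ever needed. Your induction on $M(x,t)$ instead peels off the first boundary crossing via Theorem~\ref{thdju}(iii) and then must evaluate $[U(s)f](\alpha_k)$ as a right-limit. This is legitimate, but the parenthetical claim ``where $M(y,s)<M(x,t)$'' is not automatic and deserves a line of proof: any length-$m'$ path $kj_2\dots j_{m'}\in\mathcal P_{y,s}$ that persists as $y\to\alpha_k^+$ forces $l_k+l_{j_2}+\dots+l_{j_{m'-1}}\leq s$, whence $ikj_2\dots j_{m'}\in\mathcal P_{x,t}$ (here it matters that the lower admissibility inequality in Definition~\ref{defu1}(iii) is non-strict), so $M(x,t)\geq m'+1$. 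Since only finitely many index sequences are in play, one can indeed choose $y$ close enough that both the bijection $\omega\mapsto\omega^*$ and the inequality hold simultaneously.

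For the $L^2$ extension your bounded-operator argument is cleaner than the paper's: the paper approximates $f$ by functions in $C_0^\infty(\Omega)\subset\mathscr D_B$, extracts a subsequence converging pointwise a.e.\ in both $f_k\to f$ and $U(t)f_k\to U(t)f$, and then checks that the exceptional null set is stable under the finitely many translations $x\mapsto\End(x,t,\omega)$. Your observation that the right-hand side of \eqref{equ2.2} is, for fixed $t$, a finite linear combination of localized translations (hence a bounded operator agreeing with $U(t)$ on the dense $\mathscr D_B$) bypasses this entirely. Both proofs ultimately rest on the same two ingredients: the one-step recursion of Theorem~\ref{thdju} and the combinatorics of how admissible paths concatenate.
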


\begin{proof}
First, consider $f\in \mathscr D_B$. 	Let $0<\epsilon<\min_{j} l_j$. Write $t=k\epsilon+r$ with $k\in\bn\cup\{0\}$ and $0\leq r<\epsilon$. We will prove our theorem by induction on $k$. 
	
	Suppose first that $0\leq t\leq \epsilon$. Let $x\in \Omega$, $x\in (\alpha_i,\beta_i)$. We have two cases: $t<\beta_i-x$ and $t\geq \beta_i-x$. 
	
	In the first case, $t<\beta_i-x$ and so $x+t\in(\alpha_i,\beta_i)$, therefore we can use Theorem \ref{thdju} and obtain that 
	$$U(t)f(x)=f(x+t).$$
	
	In the second case, $t\geq\beta_i-x$, and with Theorem \ref{thdju} we have 
	
\begin{equation}\label{equ2.3}
		[U(t)f](x)=\sum_{i_2=1}^n b_{i,i_2}[U(t-(\beta_i-x))f](\alpha_{i_2})=\sum_{i_2=1}^nb_{i,i_2}f(\alpha_{i_2}+ (t-(\beta_i-x))).
\end{equation}
	Note that $0\leq t-(\beta_i-x)<l_{i_2}$ because $t<\epsilon<\min l_j$. Thus the admissible paths $\omega$ for $x$ and $t$, in this case have length $m=2$ and $r(x,t,\omega)=t-(\beta_i-x)$. This shows that the formula for $U(t)$ is true for $0\leq t\leq \epsilon$. This covers the initial step in the induction, $k=0$. 
	
	Assume the formula is true for any $t<k\epsilon$ and take $k\epsilon\leq t<(k+1)\epsilon$. 
	
	Again, take $x\in (\alpha_i,\beta_i)$. We have 
	$$[U(t)f](x)=[U(\epsilon)U(t-\epsilon)f](x).$$
	We know the formula works for $U(\epsilon)$ and for $U(t-\epsilon)$ (since $0\leq t-\epsilon<k\epsilon$). (Note that $\{U(t)\}$ leaves $\mathscr D_B$ invariant (Theorem \ref{thdju})and therefore $U(t-\epsilon)f\in\mathscr D_B$).  We can assume also that $t>\beta_i-x$, otherwise we stay inside the interval $(\alpha_i,\beta_i)$ and the formula is known to be true. 
	
	Suppose first that $\epsilon<\beta_i-x$. Then  $x+\epsilon\in (\alpha_i,\beta_i)$ and 
	
	\begin{equation}
		\label{equ2.4}
			[U(t)f](x)=[U(\epsilon)(U(t-\epsilon)f)] (x)=[U(t-\epsilon)f](x+\epsilon)=\sum_{\omega=i_1\dots i_m\in\mathcal P_{x+\epsilon, t-\epsilon}}b_\omega f(\alpha_{i_m}+r(x+\epsilon,t-\epsilon,\omega)).		
	\end{equation}

	A path $\omega=i_1\dots i_m$ with $i_1=i$ is admissible for $x+\epsilon$ and $t-\epsilon$ if and only if 
	 $$(\beta_i-(x+\epsilon))+l_{i_2}+\dots  +l_{i_{m-1}}\leq t-\epsilon<(\beta_i-(x+\epsilon))+l_{i_2}+\dots  +l_{i_{m-1}}+l_{i_m}.$$
	 A path $\omega=i_1\dots i_m$ with $i_1=i$ is admissible for $x$ and $t$ if and only if 
	  $$(\beta_i-x)+l_{i_2}+\dots  +l_{i_{m-1}}\leq t<(\beta_i-x)+l_{i_2}+\dots  +l_{i_{m-1}}+l_{i_m}.$$
	  Clearly these conditions are the same. Thus $\mathcal P_{x+\epsilon, t-\epsilon}=\mathcal P_{x,t}$.

	  Let's compare the remainders. If $\omega=i_1\dots i_m$ is in $\mathcal P_{x+\epsilon,t-\epsilon}$ then 
	  $$r(x+\epsilon,t-\epsilon,\omega)=t-\epsilon -\left((\beta_i-(t-\epsilon))+l_{i_2}+\dots+l_{i_{m-1}}\right)=r(x,t,\omega).$$
	  
	  Hence, equation \eqref{equ2.4} can be rewritten 
	  $$[U(t)f](x)=\sum_{\omega=i_1\dots i_m\in \mathcal P_{x,t}}b_\omega f(\alpha_{i_m}+r(x,t,\omega)).$$

	  For the second case, $\epsilon\geq \beta_i-x$. Now, we use \eqref{equ2.3} for $\epsilon$ and we have, with $i_1=i$, 
	  $$[U(t)f](x)=[U(\epsilon)(U(t-\epsilon)f)](x)=\sum_{i_2=1}^n b_{i_1, i_2} [U(t-\epsilon) f](\alpha_{i_2}+(\epsilon-(\beta_i-x)))$$
	  \begin{equation}
\label{equ2.5}
 =\sum_{i_2=1}^nb_{i_1,i_2}\sum_{\omega=i_2\dots i_m\in\mathcal P_{\alpha_{i_2}+(\epsilon-(\beta_i-x)), t-\epsilon}}b_\omega f(\alpha_m+r(\alpha_{i_2}+(\epsilon-(\beta_i-x)), t-\epsilon,\omega)).
	  \end{equation}

	   A path $\tilde \omega=i_1i_2\dots i_m$ with $i_1=i$ is admissible for $x$ and $t$ if and only if 
	 $$(\beta_i-x)+l_{i_2}+\dots  +l_{i_{m-1}}\leq t<(\beta_i-x)+l_{i_2}+\dots  +l_{i_{m-1}}+l_{i_m}.$$
	 
	 A path $\omega=i_2\dots i_m$ is admissible for $\alpha_{i_2}+(\epsilon-(\beta_i-x))$ and $t-\epsilon$ if and only if 
	  $$(\beta_{i_2}-(\alpha_{i_2}+(\epsilon-(\beta_i-x))))+l_{i_3}+\dots  +l_{i_{m-1}}\leq t-\epsilon <(\beta_{i_2}-(\alpha_{i_2}+(\epsilon-(\beta_i-x))))+l_{i_3}+\dots  +l_{i_{m-1}}+l_{i_m}.$$
	  
	 Canceling the $\epsilon$'s and using $\beta_{i_2}-\alpha_{i_2}=l_{i_2}$, we see that $\tilde \omega=i_1i_2\dots i_m$ is admissible for $x$ and $t$ if and only if 
	 $\omega=i_2\dots i_m$ is admissible for $\alpha_{i_2}+(\epsilon-(\beta_i-x))$ and $t-\epsilon$.
	 
	 Let's look at the remainder 
	 $$r(\alpha_{i_2}+(\epsilon-(\beta_i-x)), t-\epsilon,\omega)=t-\epsilon-\left(\beta_{i_2}-(\alpha_{i_2}+(\epsilon-(\beta_i-x)))+l_{i_3}+\dots+l_{i_{m-1}}\right)$$
	 $$=t-\left((\beta_i-x)+l_{i_2}+\dots+l_{i_{m-1}}\right)=r(x,t,\tilde\omega).$$
	 
	 Also, note that 
	 $$b_{\tilde \omega}=b_{i_1,i_2}b_\omega.$$
	 Thus, equation \eqref{equ2.5} can be rewritten as 
	 $$[U(t)f](x)=\sum_{\tilde \omega\in \mathcal P_{x,t}}b_{\tilde\omega} f(\alpha_{i_m}+r(x,t,\tilde\omega)).$$
	 This proves that the formula for $U(t)$ works also when $t<\epsilon$. By induction, the conclusion follows.

	 Consider now the case when $f\in L^2(\Omega)$. Fix again $t>0$. We can approximate $f$ in $L^2(\Omega)$ by a sequence of functions $\{f_k\}$ is $C_0^\infty(\Omega)\subseteq \mathscr D_B$. The formulas are true everywhere for the $U(t)f_k$. We have that $\{U(t)f_k\}$ converges in $L^2$ to $U(t)f$. We can extract a subsequence which we relabel $\{f_k\}$ so that $\{f_k\}$ converges pointwise a.e. and in $L^2$ to $f$, and $\{U(t)f_k\}$ converges pointwise a.e. and in $L^2$ to $U(t)f$. Let $N$ be the measure zero set where one of these pointwise convergences does not hold. The set $\mathcal N$ of points $x$ with the property that $x\in N$ or $\End(x,t,\omega)\in N$ for some $\omega\in \mathcal P_{x,t}$ is a finite union of null sets, and therefore it is null. 
	 
	 Indeed, let $\omega=i_1\dots i_m$. If $\omega\in\mathcal P_{x,t}$ and $\End(x,t,\omega)\in N$, then $\alpha_j+r(x,t,\omega)\in N$. But
	 $$r(x,t,\omega)=t-(\beta_i-x+l_{i_2}+\dots+l_{i_{m-1}}).$$
	 Then $$\End(x,t,\omega)=\alpha_j+t-(\beta_i-x+l_{i_2}+\dots+l_{i_{m-1}})\in N$$
	 and this implies 
	 $$x\in N+\beta_i+l_{i_2}+\dots+l_{i_{m-1}}-\alpha_j-t,$$
	 and this is a null set. Note also that, for a fixed $t$, the length of the path $\omega$ is bounded, and therefore there are finitely many such paths, and it follows that $\mathcal N$ is indeed null.

	 Finally, taking the limit in the formulas for $U(t)f_k$ we obtain the formula for $U(t)f$ for any $x$ outside $\mathcal N$.
	
\end{proof}

\begin{corollary}
	\label{cor4.5}
	Let $t>0$ and $f\in \mathscr D_B$. 
	If $x\in\Omega$, $x\in (\alpha_i,\beta_i)$ and $0<t-(\beta_i-x)<\lmin=\min_k l_k$ then 
	$$[U(t)f](x)=\sum_{j=1}^n b_{i,j}f(\alpha_j+t-(\beta_i-x)).$$
	If $f$ is in $L^2(\Omega)$, then this formula holds for almost every such point $x$.
\end{corollary}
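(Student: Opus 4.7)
The plan is to read off this corollary directly from the formula in Theorem \ref{thu2}. The hypothesis $t-(\beta_i-x)>0$ puts us in the regime $t>\beta_i-x$, so the relevant equation is the second displayed formula \eqref{equ2.2}, and everything reduces to enumerating the admissible paths $\omega=i_1 i_2\dots i_m\in\mathcal P_{x,t}$ with $i_1=i$.

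First I would use the admissibility condition
$$(\beta_i-x)+l_{i_2}+\dots+l_{i_{m-1}}\leq t<(\beta_i-x)+l_{i_2}+\dots+l_{i_{m-1}}+l_{i_m},$$
which, after subtracting $\beta_i-x$, becomes
$$l_{i_2}+\dots+l_{i_{m-1}}\leq t-(\beta_i-x)<l_{i_2}+\dots+l_{i_{m-1}}+l_{i_m}.$$
Here the assumption $t-(\beta_i-x)<\lmin$ is crucial: if $m\geq 3$, the left-hand side would contain at least one full length $l_{i_k}\geq\lmin$, violating the strict upper bound $t-(\beta_i-x)<\lmin\leq l_{i_k}$. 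Hence the only admissible paths have length $m=2$, and any choice of $i_2\in\{1,\dots,n\}$ yields an admissible path, since the remaining condition $0\leq t-(\beta_i-x)<l_{i_2}$ is guaranteed by $\lmin\leq l_{i_2}$.

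For each such path $\omega=i\,i_2$, the definitions give $b_\omega=b_{i,i_2}$ and $r(x,t,\omega)=t-(\beta_i-x)$, so $\End(x,t,\omega)=\alpha_{i_2}+t-(\beta_i-x)$. Plugging into \eqref{equ2.2} yields
$$[U(t)f](x)=\sum_{i_2=1}^n b_{i,i_2}\,f\!\bigl(\alpha_{i_2}+t-(\beta_i-x)\bigr),$$
which, after relabeling $i_2$ as $j$, is exactly the claimed identity. For the $L^2(\Omega)$ extension, Theorem \ref{thu2} already furnishes the formula a.e., and restricting to the measurable set of points $(x,i)$ satisfying the geometric hypothesis $0<t-(\beta_i-x)<\lmin$ preserves the null-set conclusion.

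There is no real obstacle here; the only subtlety is the bookkeeping that excludes paths of length $\geq 3$, and this is forced cleanly by the $\lmin$ bound. The corollary is essentially a ``one-step'' specialization of Theorem \ref{thu2}, matching the intuition that when the remaining time after exiting $(\alpha_i,\beta_i)$ is shorter than every interval length, the flow can split at $\beta_i$ but cannot exit any of the destination intervals.
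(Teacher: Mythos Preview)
Your proof is correct and follows essentially the same approach as the paper's: both invoke Theorem \ref{thu2} and use the bound $t-(\beta_i-x)<\lmin$ to force every admissible path to have length $m=2$, after which the formula drops out immediately. Your write-up is slightly more explicit in checking that every choice of $i_2$ actually gives an admissible path, but the argument is the same.
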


\begin{proof}
	Since $0<t-(\beta_i-x)<\lmin$, if $\omega=i_1\dots i_m$ is admissible for $x$ and $t$, then $i_1=i$ and $(\beta_i-x)+l_{i_2}+\dots+l_{i_{m-1}}\leq t<(\beta_i-x)+\lmin$. This means that $m=2$, and so $\omega=ij$, $r(x,t,\omega)=t-(\beta_i-x)$, and $b_\omega=b_{i,j}$. 
\end{proof}

\begin{corollary}
	\label{cor4.6}
	Suppose all the intervals of $\Omega$ have the same length ${\ell}$. If $p\geq 1$ is an integer, then the following properties must hold:
	\begin{enumerate}
		\item If $x\in (\alpha_i,\beta_i)$ and $t>0$ such that $(p-1){\ell}\leq t-(\beta_i-x)< p\ell$, then $r(x,t,\omega)=t-(\beta_i-x)-(p-1){\ell}=:r(x,t)$ for every $\omega\in \mathcal P_{x,t}$. 
		\item For every $x\in (\alpha_i,\beta_i)$ and $t>0$ such that $(p-1){\ell}<t-(\beta_i-x)< p{\ell}$ we have
		$$[U(t)f](x)=\sum_{j=1}^n [B^p]_{i,j}f(\alpha_j+r(x,t)).$$

	\end{enumerate}
\end{corollary}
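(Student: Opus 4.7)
The strategy is to substitute into the formula of Theorem \ref{thu2} and exploit the fact that all interval lengths $l_k$ collapse to the common value $\ell$. The key observation is that, under this hypothesis, the admissibility condition in Definition \ref{defu1}(iii) becomes an inequality purely on the length $m$ of the path. Once $m$ is pinned down, the remainder is automatically the same for every admissible path, and the sum in \eqref{equ2.2} becomes a sum over length-$(p+1)$ words which is recognized as a matrix power.

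For (i), since $p \geq 1$ forces $t - (\beta_i - x) \geq 0$, every admissible path $\omega = i_1 i_2 \cdots i_m \in \mathcal P_{x,t}$ has $m \geq 2$. Substituting $l_{i_k} = \ell$ into the admissibility inequality
\[
(\beta_i - x) + l_{i_2} + \cdots + l_{i_{m-1}} \leq t < (\beta_i - x) + l_{i_2} + \cdots + l_{i_{m-1}} + l_{i_m}
\]
reduces it to $(m-2)\ell \leq t - (\beta_i - x) < (m-1)\ell$. Comparing this with the hypothesis $(p-1)\ell \leq t - (\beta_i - x) < p\ell$ forces $m = p+1$ for every admissible path. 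The definition \eqref{equ1.1} of the remainder then yields
\[
r(x,t,\omega) = t - \bigl((\beta_i - x) + (p-1)\ell\bigr),
\]
which does not depend on $\omega$; this is (i), and we denote the common value by $r(x,t)$.

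For (ii), the stronger strict hypothesis makes part (i) applicable, and $\mathcal P_{x,t}$ consists of all words $\omega = i_1 i_2 \cdots i_{p+1}$ with $i_1 = i$ and $i_2,\dots,i_{p+1}$ arbitrary in $\{1,\dots,n\}$. Grouping the sum \eqref{equ2.2} by the last index $j := i_{p+1}$ gives
\[
[U(t)f](x) = \sum_{j=1}^n \Biggl(\sum_{i_2,\dots,i_p = 1}^n b_{i, i_2}\, b_{i_2, i_3}\cdots b_{i_p, j}\Biggr) f(\alpha_j + r(x,t)),
\]
and the inner sum is precisely the $(i,j)$-entry of $B^p$ by the definition of matrix multiplication.

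There is no substantive obstacle here; the argument is purely combinatorial on top of Theorem \ref{thu2}. The only subtlety worth flagging is the boundary case $t - (\beta_i - x) = (p-1)\ell$ in (i), where $r(x,t) = 0$ places the path endpoint exactly at a left endpoint $\alpha_j$; but this is a measure-zero issue already absorbed by Theorem \ref{thu2} when $f \in L^2(\Omega)$, and part (ii) excludes it anyway via its strict left inequality.
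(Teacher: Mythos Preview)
Your proof is correct and follows essentially the same approach as the paper's: in both, the equal-length hypothesis collapses the admissibility inequality to $(m-2)\ell \le t-(\beta_i-x) < (m-1)\ell$, forcing $m=p+1$ and making the remainder path-independent, after which the sum in Theorem~\ref{thu2} is grouped by the final index to produce the $(i,j)$-entry of $B^p$. Your additional remark on the boundary case $t-(\beta_i-x)=(p-1)\ell$ is a nice clarification not made explicit in the paper.
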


\begin{proof}
	
	(i) If $\omega=i_1\dots i_m\in \mathcal P_{x,t}$ then 
	$$(\beta_i-x)+l_{i_2}+\dots+l_{i_{m-1}}=(\beta_i-x)+(m-2){\ell}\leq t<(\beta_i-x)+l_{i_2}+\dots+l_{i_m}=(\beta_i-x)+(m-1){\ell},$$
	and therefore $r(x,t,\omega)=t-(\beta_i-x)-(m-2){\ell}$.
	Since, by assumption we have $(p-1){\ell}\leq t-(\beta_i-x)<p{\ell}$, we know that we must have $m-1=p$, so $m=p+1$. This proves (i). 
	
	(ii) Suppose $x\in (\alpha_i,\beta_i)$ and $(p-1){\ell}\leq t-(\beta_i-x)<p{\ell}$. Using the formula in Theorem \ref{thu2}, we write 
	$$[U(t)f](x)=\sum_{\omega\in\mathcal P_{x,t}}b_\omega f(\alpha_{i_m}+r(x,t))$$
	$$=\sum_{j=1}^n\left(\sum_{1\leq i_2,\dots,i_{m-1}\leq n}b_{i,i_2}b_{i_2,i_3}\dots b_{i_{m-2},i_{m-1}}b_{i_{m-1},j}\right)f(\alpha_j+r(x,t))$$
	$$=\sum_{j=1}^n[B^p]_{i,j}f(\alpha_j+r(x,t)),$$
	as desired. 
\end{proof}

Similar results can be obtained for negative values of $t$. Now the point will go through the left-endpoints $\alpha_{i_k}$ and the weights are given by the adjoint matrix $B^*=(b_{i,j}^*)=(\cj b_{j,i})$.

\begin{definition}
	\label{def2.4}{\bf [$t$ negative]}
	Let $x\in\Omega$, $x\in(\alpha_i,\beta_i)$ and $t<0$. An {\it admissible path} for $x$ and $t$ is sequence of indices  $\omega=i_1i_2\dots i_m$, $i_k\in\{1,\dots,n\}$ for all $1\leq k\leq m$, with the following properties
	\begin{enumerate}
		\item $i_1=i$. 
		\item If $t>\alpha_i-x$ then $m=1$. 
		\item If $t\leq\alpha_i-x$ then $m\geq 2$ and 
		$$(\alpha_i-x)-l_{i_2}-l_{i_3}-\dots-l_{i_{m-1}}\geq t >(\alpha_i-x)-l_{i_2}-l_{i_3}-\dots-l_{i_{m-1}}-l_{i_m}.$$
	\end{enumerate}
	If $t\leq\alpha_i-x$, we denote by 
	$$r(x,t,\omega):=(\alpha_i-x)-l_{i_2}-l_{i_3}-\dots-l_{i_{m-1}}-t,$$
	and note that $0\leq r(x,t,\omega)<l_{i_m}$. We call $r(x,t,\omega)$ the {\it remainder} of the path $\omega$. We call $m$ the {\it length} of $\omega$.

We denote by 
$$b_\omega:=b^*_\omega:=b^*_{i_1,i_2}b^*_{i_2,i_3}\dots b^*_{i_{m-1},i_m}.$$

	We call $\beta_{i_m}-r(x,t,\omega)$ the {\it end} of the path $\omega$ (starting at $x$), and we denote it by $\End(x,t,\omega)$.

Let $\mathcal P_{x,t}$ be the set of all admissible paths for $x$ and $t$, and let $\Ends(x,t)$ be the set of all ends of the admissible paths for $x$ and $t$, 
$$\Ends(x,t)=\{\End(x,t,\omega): \omega\in\mathcal P_{x,t}\}.$$

Let $\mathcal P_{x,t}$ be the set of admissible paths for $x$ and $t$. 
\end{definition}

The next theorem can be obtained by applying Theorem \ref{thu2} to the set $-\Omega$ with boundary matrix $B^*$, as described in Remark \ref{reminu}.

\begin{theorem}\label{th2.5}
	Let $t<0$ and $f\in \mathscr D_B$. Let $x\in\Omega$, $x\in (\alpha_i,\beta_i)$.
	
	If $t>x-\alpha_i$ then 
	$$[U(t)f](x)=f(x+t).$$
	If $t\leq x-\alpha_i$, then 
	\begin{equation}
		\label{equ2.9}
		[U(t)f](x)=\sum_{\omega=i_1\dots i_m\in\mathcal P_{x,t}}b_\omega^* f(\beta_{i_m}-r(x,t,\omega)).
	\end{equation}
	If $f\in L^2(\Omega)$ then the formulas above hold for almost every $x\in\Omega$.
\end{theorem}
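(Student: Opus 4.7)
The plan is to deduce Theorem \ref{th2.5} from Theorem \ref{thu2} via the reflection symmetry of Proposition \ref{prminu}, as indicated in Remark \ref{reminu}. Fix $t<0$ and $f\in\mathscr{D}_B$, set $s:=-t>0$, and let $x\in(\alpha_i,\beta_i)$. Proposition \ref{prminu}(ii) gives
\[
[U_B(t)f](x) = [J^*U_{B^*}(s)Jf](x) = [U_{B^*}(s)(Jf)](-x),
\]
so everything reduces to evaluating $U_{B^*}(s)$ at the point $-x\in-\Omega$ for positive time $s$, a situation covered by Theorem \ref{thu2}.

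The next step is to describe $-\Omega$ as a labeled union of intervals compatible with the matrix $B^*$. The proof of Proposition \ref{prminu} records the boundary condition $B^*f(-\vec\beta)=f(-\vec\alpha)$ on $-\Omega$, which means the $k$-th interval of $-\Omega$, in the labeling compatible with $B^*$, has left endpoint $\tilde\alpha_k:=-\beta_k$ and right endpoint $\tilde\beta_k:=-\alpha_k$, with length $\tilde l_k=l_k$. These intervals are not in increasing spatial order, but the formula in Theorem \ref{thu2} depends only on the labeling being compatible with the boundary matrix, not on spatial order, so the theorem applies directly. (Alternatively one can reindex by the order-reversing permutation and conjugate $B^*$ accordingly; the final formula is unchanged.)

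Applying Theorem \ref{thu2} to $Jf$ at $(-x,s)$ gives a sum over admissible paths $\tilde\omega=\tilde i_1\cdots\tilde i_m$ for $(-x,s)$ in $-\Omega$. I would then check that setting $i_k:=\tilde i_k$ identifies these with the admissible paths for $(x,t)$ in the sense of Definition \ref{def2.4}: the admissibility inequality transforms, using $\tilde\beta_{\tilde i}-(-x)=x-\alpha_i$, $\tilde l_{\tilde i_k}=l_{i_k}$ and $s=-t$, into the inequality of Definition \ref{def2.4}(iii); the remainder $\tilde r(-x,s,\tilde\omega)=s-(x-\alpha_i)-l_{i_2}-\dots-l_{i_{m-1}}$ equals $r(x,t,\omega)$; the end $\tilde\alpha_{\tilde i_m}+\tilde r=-\beta_{i_m}+r(x,t,\omega)$ is carried by $J$ back to $\beta_{i_m}-r(x,t,\omega)=\End(x,t,\omega)$; and the weight $\tilde b_{\tilde\omega}$ is literally $b^*_\omega$ since the boundary matrix on $-\Omega$ is $B^*$. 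Assembling these identifications in the sum produces \eqref{equ2.9}. The in-interval case (the analog of the condition separating $m=1$ from $m\geq 2$) falls out similarly from the $m=1$ branch of Theorem \ref{thu2}, yielding $[U_B(t)f](x) = (Jf)(-x+s)=f(x+t)$. Finally, the extension to $f\in L^2(\Omega)$ is inherited directly from the corresponding $L^2$ conclusion of Theorem \ref{thu2} applied to $Jf\in L^2(-\Omega)$, since $J$ is unitary and preserves a.e. statements.

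The main obstacle is the bookkeeping: one must track carefully that admissibility, remainder, end, and path weight all transform correctly under the reflection $J$ and the swap of the roles of left/right endpoints that it induces. These are direct substitutions, but the sign changes (turning the ``$\leq t<$'' chain of Theorem \ref{thu2} into the ``$\geq t>$'' chain of Definition \ref{def2.4}) have to be tracked consistently to land on the formulas precisely as stated.
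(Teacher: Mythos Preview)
Your proposal is correct and follows exactly the route the paper indicates (the sentence preceding the theorem and Remark \ref{reminu}): reflect via $J$, apply Theorem \ref{thu2} to $-\Omega$ with boundary matrix $B^*$ at $(-x,s)$ with $s=-t>0$, and translate the data of Definition \ref{defu1} into that of Definition \ref{def2.4}. Your handling of the labeling issue (that Theorem \ref{thu2} only needs the interval labels to match the boundary matrix, not spatial order) is a legitimate point the paper leaves implicit, and your verification of the dictionary for admissibility, remainders, ends, and weights is accurate.
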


\section{Spectral sets}\label{sec3}
In this section we focus on the case when our union of intervals $\Omega$ is spectral. In this case, the unitary group $\{U(t)\}$ has the local translation property so $[U(t)f](x)=f(x+t)$ when both $x$ and $x+t$ are in $\Omega$, even when these two points are in different intervals (Theorem \ref{th7.4}). We couple this property with the formula for $U(t)f$ from Theorem \ref{thu2} to extract some geometric properties of our set $\Omega$.

\begin{theorem}\label{thl1}
	Suppose that the set $\Omega$ with boundary matrix $B$ is spectral.  
	\begin{enumerate}
		\item All gaps $\alpha_j-\beta_i>0$, $i\neq j$ are sums of lengths of intervals of $\Omega$. 
		\item   Let $x\in\Omega$, $x\in (\alpha_i,\beta_i)$, $t\in\br$ and $x+t\in(\alpha_j,\beta_j)$ with $i\neq j$.  The following formulas hold:
		\begin{equation}
			\label{eql1.1}
			\sum\{b_\omega: \omega\in\mathcal P_{x,t} , \End(x,t,\omega)=x+t\}=1.
		\end{equation}
		
		\begin{equation}
			\label{eql1.2}
				\sum\{b_\omega: \omega\in\mathcal P_{x,t} , \End(x,t,\omega)=e\}=0,\mbox{ for all }e\in\Ends(x,t), e\neq x+t. 
		\end{equation}

	\end{enumerate}
\end{theorem}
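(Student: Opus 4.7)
The plan is to combine the explicit formula for $U(t)$ from Theorem \ref{thu2} (and Theorem \ref{th2.5} for $t<0$) with the local translation property from Theorem \ref{th7.4}. Under the spectral hypothesis, both formulas must agree, so for $x\in(\alpha_i,\beta_i)$ with $x+t\in(\alpha_j,\beta_j)$, $i\neq j$, and for $f$ in a dense subclass of $L^2(\Omega)$ (say $\mathscr D_B$),
\[
f(x+t)\;=\;\sum_{\omega\in\mathcal P_{x,t}} b_\omega\, f(\End(x,t,\omega))\;=\;\sum_{e\in\Ends(x,t)} S(x,t,e)\, f(e),
\]
where $S(x,t,e):=\sum\{b_\omega:\omega\in\mathcal P_{x,t},\,\End(x,t,\omega)=e\}$. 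This identity holds for almost every $x$ in the open set $\{x\in(\alpha_i,\beta_i): x+t\in(\alpha_j,\beta_j)\}$.

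First I would pick a generic $x_0$ in this open set at which the inequalities in Definition \ref{defu1} are all strict; outside a finite exceptional set this is automatic, and for $x$ in a small neighborhood of $x_0$ the set $\mathcal P_{x,t}$ is constant equal to $\mathcal P_{x_0,t}$, the coefficients $b_\omega$ are unchanged, and each end shifts rigidly: $\End(x,t,\omega)=\End(x_0,t,\omega)+(x-x_0)$. Writing $h=x-x_0$, the identity becomes $f(x_0+t+h)=\sum_{e\in\Ends(x_0,t)} S(x_0,t,e)\,f(e+h)$ for a.e.\ small $h$ and all $f$ in the dense subclass, hence by density for continuous $f$.

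I then test against bump functions. If $x_0+t\notin\Ends(x_0,t)$, then the points $\{x_0+t+h\}$ and $\{e+h\}_{e\in\Ends(x_0,t)}$ are separated by a positive distance for small $h$; choosing a continuous $f$ supported near $x_0+t+h_0$ and vanishing near every $e+h_0$ gives a nonzero left side and zero right side, a contradiction. So $x_0+t\in\Ends(x_0,t)$. Next, taking $f$ concentrated near $x_0+t+h_0$ and vanishing near the other translated ends forces $S(x_0,t,x_0+t)=1$, and taking $f$ concentrated near $e+h_0$ for $e\neq x_0+t$ and vanishing at all other translates forces $S(x_0,t,e)=0$. This proves (ii) at $x_0$; since the generic set is dense in the open set in question, the formulas extend to every such $x$ by reindexing paths relative to the interval containing $x$.

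For part (i), the membership $x_0+t\in\Ends(x_0,t)$ means there is an admissible path $\omega=i_1 i_2\cdots i_m$ with $i_1=i$ and $\alpha_{i_m}+r(x_0,t,\omega)=x_0+t$. Since $x_0+t\in(\alpha_j,\beta_j)$ and $\alpha_{i_m}+r\in[\alpha_{i_m},\beta_{i_m})$, this forces $i_m=j$ and $r(x_0,t,\omega)=x_0+t-\alpha_j$. Substituting into \eqref{equ1.1} and simplifying yields
\[
\alpha_j-\beta_i \;=\; l_{i_2}+l_{i_3}+\cdots+l_{i_{m-1}},
\]
a sum of lengths of intervals of $\Omega$ (nonempty because the gap is positive forces $m\geq 3$). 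Picking pairs $(x_0,t)$ appropriately for each pair $i\neq j$ with $\alpha_j>\beta_i$ yields (i). The case $t<0$ is entirely analogous using Theorem \ref{th2.5}, or by invoking Proposition \ref{prminu} via Remark \ref{reminu}. The main obstacle is the passage from the a.e.\ identity to a rigid pointwise linear relation among translates of $f$: this is resolved by the local-constancy of $\mathcal P_{x,t}$ at generic $x$ together with the rigid shift behavior of the ends, which is what allows the bump-function argument to extract the coefficients.
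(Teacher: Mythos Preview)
Your approach is essentially the same as the paper's: equate the explicit path formula for $U(t)$ with the local-translation value $f(x+t)$, then read off the coefficients by evaluating at suitable test functions. The paper, however, works directly at a single fixed $x$ rather than in a neighborhood of a generic $x_0$: since for $f\in\mathscr D_B$ both Theorem~\ref{thu2} and the local-translation identity hold \emph{pointwise} (these functions are absolutely continuous and $U(t)$ preserves $\mathscr D_B$), one can simply construct an $f\in\mathscr D_B$ with prescribed values on the finite set $\{x+t\}\cup\Ends(x,t)$ and compatible boundary data $Bf(\vec\alpha)=f(\vec\beta)$, and the equalities \eqref{eql1.1}--\eqref{eql1.2} drop out immediately. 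This makes your genericity/local-constancy layer unnecessary, and in particular sidesteps your final step ``the formulas extend to every such $x$ by reindexing paths,'' which as written is not justified (at non-generic $x$ the set $\mathcal P_{x,t}$ can change discretely, so density alone does not give the pointwise statement). Your derivation of (i) from $x+t\in\Ends(x,t)$ and your handling of $t<0$ via Remark~\ref{reminu} match the paper exactly.
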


\begin{proof}
	Take $t$ positive first. 
		Since $\Omega$ is spectral, for a function $f\in\mathscr D_B$, with Theorem \ref{thu2},
	\begin{equation}
		\label{eql1}
			f(x+t)=[U(t)f](x)=\sum_{e\in \Ends(x,t)}\sum_{\substack{\omega\in \mathcal P_{x,t}\\ \End(x,t,\omega)=e}}b_\omega f(e).
	\end{equation}
We prove first that $x+t\in \Ends(x,t)$. If not, take a function $f\in\mathscr D_B$ with support in $\Omega$  such that $f(x+t)=1$ and $f(e)=0$ for all $e\in \Ends(x,t)$, and such that $f$ is zero at all the endpoints of the intervals, which means the boundary conditions $Bf(\vec\alpha)=f(\vec\beta)$ are satisfied. Then the left-hand side of \eqref{eql1} is 1 and the right-hand side is zero, a contradiction. 

Thus $x+t\in \Ends(x,t)$. This means that there exists an admissible path $\omega=i_1\dots i_m\in\mathcal P_{x,t}$ with $x+t=\alpha_{i_m}+r(x,t,\omega)$ and $i_m=j$ (since $x+t\in (\alpha_j,\beta_j)$). Then 
$$t=(\beta_i-x)+l_{i_2}+\dots+l_{i_{m-1}}+r(x,t,\omega)=(\beta_i-x)+l_{i_2}+\dots+l_{i_{m-1}}+(x+t-\alpha_{j}).$$
Simplifying:
$$\alpha_j-\beta_i=l_{i_2}+\dots+l_{i_{m-1}},$$
and (i) follows.

Take now in \eqref{eql1} a function $f\in\mathscr D_B$ which is zero at the endpoints of the intervals, and $f(x+t)=1$ and $f(e)=0$ for all $e\in\Ends(x,t)$, $e\neq x+t$. Plug this in \eqref{eql1}, we obtain \eqref{eql1.1}. 

Now take an endpoint $e_0\in \Ends(x,t)$ , $e_0\neq x+t$. Take a continuous function $f\in\mathscr D_B$, with $f(e_0)=1$, $f(x+t)=0$ and $f(e)=0$ for all $e\in \Ends(x,t)$, $e\neq e_0$. If $e_0$ is not an endpoint of an interval, then one can make $f$ zero at all the endpoints of the intervals, to make sure the boundary conditions $Bf(\vec\alpha)=f(\vec\beta)$ are satisfied. If $e_0$ is an endpoint, note that by our conventions in Definition \ref{defu1}, for $t>0$, all points in $\Ends(x,t)$ cannot be the right-endpoints $\beta_k$. Therefore $e_0$ has to be one of the $\alpha$'s,  $e_0=\alpha_{i_0}$, and any other point in $\Ends(x,t)$, if it is also an endpoint of an interval, then it has to be one of the $\alpha$'s. Thus, we can construct a function $f\in\mathscr D_B$, which is 1 at $e_0$, 0 at all other points in $\Ends(x,t)$, $f(x+t)=0$, and we can make sure that the values of $f$ at the $\beta$'s are correct so that the boundary condition $Bf(\vec\alpha)=f(\vec \beta)$ are satisfied, which amounts to $f(\vec\beta)=B\delta_{i_0}$ (just use a piecewise linear continuous function, or some smooth interpolation).

 Finally, if we plug $f$ in \eqref{eql1}, we obtain \eqref{eql1.2}. 

For the case when $t$ is negative, one can apply the positive case to the set $-\Omega$ with boundary matrix $B^*$ and $-t$, as explained in Remark \ref{reminu}, and then use Proposition \ref{prminu}.
\end{proof}

\begin{remark}
An alternative proof of Theorem \ref{thl1}(i) was independently obtained by Kolountzakis, Lev, and Matolcsi in \cite[Theorem 3.1]{KLM25} around the time this paper was written.
\end{remark}

\begin{proposition}
	\label{prg}
	Suppose $\Omega$ is spectral. Assume two of the intervals of $\Omega$ have a common endpoint, $\alpha_{i+1}=\beta_i$. Then $b_{i,i+1}=1$, and $b_{i,j}=0$ for $j\neq i+1$, $b_{j,i+1}=0$ for $j\neq i$. 
\end{proposition}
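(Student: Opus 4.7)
The strategy is to apply Theorem \ref{thl1} to a pair $(x,t)$ for which the only short admissible path jumps from the interval $(\alpha_i,\beta_i)$ directly across the shared boundary into $(\alpha_{i+1},\beta_{i+1})$. Pick any $x\in(\alpha_i,\beta_i)$ together with $t>0$ such that $s:=t-(\beta_i-x)$ satisfies $0<s<\lmin$ (for instance $x=(\alpha_i+\beta_i)/2$ and $t=(\beta_i-x)+\lmin/2$). The hypothesis $\alpha_{i+1}=\beta_i$ immediately gives $x+t=\beta_i+s=\alpha_{i+1}+s\in(\alpha_{i+1},\beta_{i+1})$, so the pair $(x,t)$ lives in the regime of Theorem \ref{thl1}(ii), with $x$ and $x+t$ in distinct intervals.

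The next step is to enumerate $\mathcal P_{x,t}$. Because $s<\lmin$, Corollary \ref{cor4.5} tells us that every admissible path has length exactly two; they are precisely $\omega_j=ij$ for $j=1,\dots,n$, with weight $b_{\omega_j}=b_{i,j}$ and end point $\End(x,t,\omega_j)=\alpha_j+s$. Since the $\alpha_j$ are pairwise distinct, these $n$ end points are distinct, and the unique one coinciding with $x+t$ is $\alpha_{i+1}+s$, achieved only by $\omega_{i+1}$. Plugging this into \eqref{eql1.1} yields $b_{i,i+1}=1$, and plugging each other end point into \eqref{eql1.2} yields $b_{i,j}=0$ for every $j\neq i+1$.

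To finish off the column statement, invoke the unitarity of $B$: its column indexed by $i+1$ has unit Euclidean norm, so $\sum_{k=1}^n|b_{k,i+1}|^2=1$. Combined with $|b_{i,i+1}|=1$ this forces $b_{k,i+1}=0$ for every $k\neq i$. I do not anticipate a genuine obstacle; the content is entirely local and the heavy lifting is supplied by Corollary \ref{cor4.5} and Theorem \ref{thl1}. The one bookkeeping point that must be checked is simply that the bound $s<\lmin$ rules out admissible paths of length three or more, which is immediate from the inequality defining admissibility, since any second index $i_2$ would contribute at least $l_{i_2}\geq\lmin>s$.
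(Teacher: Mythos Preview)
Your argument is correct and follows essentially the same approach as the paper: choose $x\in(\alpha_i,\beta_i)$ and $t>0$ with $0<t-(\beta_i-x)<\lmin$, observe that all admissible paths have length two, identify the unique one ending at $x+t$ as $i(i+1)$, and apply \eqref{eql1.1} together with unitarity of $B$. The only cosmetic difference is that the paper deduces $b_{i,j}=0$ for $j\neq i+1$ directly from unitarity of the $i$-th row rather than from \eqref{eql1.2}, but this is immaterial.
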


\begin{proof}
	Let $\epsilon>0$ with $2\epsilon<\lmin:=\min_k l_k$. Let $x=\beta_i-\epsilon\in(\alpha_i,\beta_i)$ and let $t=2\epsilon$, so that $x+t=\alpha_{i+1}+\epsilon\in (\alpha_{i+1},\beta_{i+1})$. Let $\omega_0=i(i+1)$. $\omega_0$ is an admissible path for $x$ and $t$ with $\End(x,t,\omega)=\alpha_{i+1}+\epsilon=x+t$. 
	
	If $\omega=i_1\dots i_m$ is an admissible path in $\mathcal P_{x,t}$ with $\End(x,t,\omega)=x+t=\alpha_{i+1}+\epsilon$, then $i_1=i$, $i_m=i+1$, and 
	$$2\epsilon=t=\epsilon+l_{i_2}+\dots+l_{i_{m-1}}+\epsilon.$$
	This implies that $m=2$ and so $\omega=i(i+1)$. From \eqref{eql1.1}, we get that $b_{i,i+1}=b_{\omega_0}=1$. Since $B$ is unitary, the $i$-th row and the $i+1$-st column have norm one, which implies that all the other entries in this row and this column are zero.
\end{proof}

\begin{proposition}
	\label{prl3}
	Suppose $\Omega$ is spectral. Suppose one of the gaps, $\alpha_{i+1}-\beta_i$ is equal to the minimal length $\lmin=\min_k l_k$. 
	\begin{enumerate}
		\item The following relation holds:
		\begin{equation}
			\label{eql3.1}
			\sum_{j: l_j=\lmin}b_{i,j}b_{j,i+1}=1.
		\end{equation}
		\item $b_{i,j}=0$ and $b_{j,i+1}=0$, for all $j$ with $l_j>\lmin$
	\end{enumerate}
\end{proposition}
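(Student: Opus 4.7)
The plan is to adapt the approach of Proposition~\ref{prg}, but now with a gap equal to $\lmin$ rather than $0$, so that admissible paths of lengths $2$ and $3$ both appear. Fix small $\epsilon,\delta>0$ with $\delta<\lmin$ and also $\delta<l_j-\lmin$ for every $j$ with $l_j>\lmin$. Set $x=\beta_i-\epsilon\in(\alpha_i,\beta_i)$ and $t=\lmin+\epsilon+\delta$, so that $x+t=\alpha_{i+1}+\delta\in(\alpha_{i+1},\beta_{i+1})$. Since $\Omega$ is spectral, Theorem~\ref{th7.4} gives $[U(t)f](x)=f(x+t)$, and this will be combined with Theorem~\ref{thl1}.

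The first step is to enumerate the admissible paths $\omega=i_1\ldots i_m$ for $x$ and $t$. Using $i_1=i$ and $\beta_i-x=\epsilon$, the defining inequality becomes $l_{i_2}+\cdots+l_{i_{m-1}}\leq\lmin+\delta<l_{i_2}+\cdots+l_{i_m}$. Since each length is $\geq\lmin$ and $\delta<\lmin$, lengths $m\geq 4$ are impossible (they would force a sum of at least $2\lmin$ of lengths to be $\leq\lmin+\delta$). Therefore only two types of paths occur: length-$2$ paths $\omega=ij$ with $l_j>\lmin+\delta$ (so $l_j>\lmin$), with end $\alpha_j+\lmin+\delta$ and weight $b_{i,j}$; and length-$3$ paths $\omega=ikp$ with $l_k=\lmin$, with end $\alpha_p+\delta$ and weight $b_{i,k}b_{k,p}$.

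For part (i), I apply \eqref{eql1.1} at $x+t=\alpha_{i+1}+\delta$. A length-$2$ path would end there only if $\alpha_j=\alpha_{i+1}-\lmin=\beta_i$, which is forbidden by the strict ordering in \eqref{eqom}. Hence only length-$3$ paths $\omega=ik(i+1)$ with $l_k=\lmin$ contribute, giving $\sum_{k:\,l_k=\lmin}b_{i,k}b_{k,i+1}=1$. For part (ii), fix $j$ with $l_j>\lmin$ and apply \eqref{eql1.2} at $e=\alpha_j+\lmin+\delta$. The path $\omega=ij$ contributes $b_{i,j}$. A length-$3$ path $\omega=ikp$ could contribute only if $\alpha_p=\alpha_j+\lmin$; but $\alpha_p\leq\alpha_j+\lmin\leq\alpha_j+l_j=\beta_j$ combined with $\alpha_p>\beta_{p-1}$ forces $p\leq j$, while $\alpha_p>\alpha_j$ forces $p>j$, a contradiction. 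Hence $b_{i,j}=0$.

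The identity $b_{j,i+1}=0$ for $l_j>\lmin$ follows by the analogous argument applied to negative $t$. The cleanest route is to invoke Proposition~\ref{prminu} and Remark~\ref{reminu}: apply the already-proven positive-$t$ case to the reflected set $-\Omega$ with boundary matrix $B^*$, in which the same gap $\lmin$ reappears between the reversed labels of intervals $i+1$ and $i$; translating the conclusion back yields $\overline{b_{j,i+1}}=0$. Alternatively, one can run the same combinatorial enumeration directly with $x\in(\alpha_{i+1},\beta_{i+1})$, $t<0$, and $x+t\in(\alpha_i,\beta_i)$ via Theorem~\ref{th2.5}, where the length-$3$ coincidence $\beta_p=\beta_j-\lmin$ is again ruled out by the strict ordering of endpoints. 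I don't anticipate a serious obstacle: the core of the proof is the observation that for $\delta<\lmin$ only paths of length $\leq 3$ exist, and the strict interleaving of the $\alpha$'s and $\beta$'s prevents any accidental coincidence between ends of length-$2$ and length-$3$ paths; the only care needed is choosing $\delta$ smaller than each of the finitely many positive values $l_j-\lmin$.
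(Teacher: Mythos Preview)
Your proof is correct and follows essentially the same approach as the paper's: choose a point just left of $\beta_i$, a time that crosses the gap of length $\lmin$, enumerate the admissible paths (which have length at most $3$), and apply equations \eqref{eql1.1} and \eqref{eql1.2} of Theorem~\ref{thl1}. The only cosmetic differences are that the paper uses a single parameter $\epsilon$ (your $\epsilon+\delta$ is their $2\epsilon$), and for $b_{j,i+1}=0$ the paper works directly with negative $t$ via Theorem~\ref{th2.5} rather than first invoking the reflection of Proposition~\ref{prminu}; your alternative route via Theorem~\ref{th2.5} is exactly the paper's argument.
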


\begin{proof}
	Let $\epsilon>0$ and $2\epsilon<\lmin$. Let $x=\beta_i-\epsilon\in (\alpha_i,\beta_i)$, $t=\lmin+2\epsilon$ so that $x+t=\alpha_{i+1}+\epsilon\in(\alpha_{i+1},\beta_{i+1})$. 
	This shows that, if $l_j=\lmin$ then the path $\omega_j=ij(i+1)$ is admissible with $\End(x,t,\omega_j)=x+t$. 
	
	If $\omega=i_1\dots i_m$ is an admissible path $\omega\in \mathcal P_{x,t}$ with $\End(x,t,\omega)=x+t$, then we must have $i_1=i$, $i_m=i+1$, $r(x,t,\omega)=\epsilon$, and 
	$$\lmin+2\epsilon=t=\epsilon+l_{i_2}+\dots+l_{i_{m-1}}+\epsilon.$$
	This means that $m=3$, and $l_{i_2}=\lmin$, and so $\omega=\omega_{i_2}$. We have $b_{\omega_j}=b_{i,j}b_{j,i+1}$. From \eqref{eql1.1}, we obtain \eqref{eql3.1}.

	For (ii), take $\epsilon$ small enough so that $\lmin +2\epsilon< l_j$ for all $j$ with $l_j>\lmin$. For an index $j$ with $l_j>\lmin$, we can write 
	$$\lmin+2\epsilon=t=\epsilon+(\lmin+\epsilon),$$ 
	and we can consider $\lmin+\epsilon$ as a remainder inside the interval $(\alpha_j,\beta_j)$; thus $\omega_0=ij$ is an admissible path $\omega_0\in\mathcal P_{x,t}$ and $\End(x,t,\omega_0)=\alpha_j+(\lmin+\epsilon)$.
	
	If $\omega=i_1\dots i_m$ is an admissible path with $\End(x,t,\omega)=\End(x,t,\omega_0)=\alpha_j+(\lmin+\epsilon)$, then $i_1=i$, $i_m=j$, $r(x,t,\omega)=\lmin+\epsilon$ and 
	$$\lmin+2\epsilon=t=\epsilon+l_{i_2}+\dots+l_{i_{m-1}}+(\lmin+\epsilon).$$
	This implies that $m=2$, and so $\omega=ij=\omega_0$. With \eqref{eql1.2}, we obtain that $b_{i,j}=b_{\omega_0}=0$. 
	
	Take now, $x=\alpha_{i+1}+\epsilon$, $t=\lmin+2\epsilon$, so that $x-t=\beta_i-\epsilon\in(\alpha_i,\beta_i)$. For an index $j$ with $l_j>\lmin$, we can write 
	$$-t=-\epsilon-(\lmin+\epsilon),$$ 
	and we can consider $\lmin+\epsilon$ as a remainder in the interval $(\alpha_j,\beta_j)$ for a path for $x$ and $-t$. Therefore the path $\omega_0=(i+1)j$ is admissible $\omega_0\in \mathcal P_{x,-t}$ and $\End(x,t,\omega_0)=\beta_j-(\lmin+\epsilon)$. 
	
	If $\omega=i_1\dots i_m\in\mathcal P_{x,-t}$ with $\End(x,t,\omega)=\beta_j-(\lmin+\epsilon)$, then $i_1=i+1$, $i_m=j$, $r(x,-t,\omega)=\lmin+\epsilon$ and 
	$$-t=-\epsilon-l_{i_2}-\dots-l_{i_{m-1}}-(\lmin+\epsilon).$$
	This forces $m=2$ and therefore $\omega=(i+1)j=\omega_0$. Using again \eqref{eql1.2}, we obtain $b_{i+1,j}^*=b_{\omega_0}^*=0$ so $b_{j,i+1}=0$.

\end{proof}

\begin{proposition}\label{prd}
	Suppose $\Omega$ is spectral and has at least two intervals. Then $|b_{k,k}|\neq 1$ for all $1\leq k\leq n$. 
\end{proposition}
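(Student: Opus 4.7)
The first step will be to note that unitarity of $B$ forces the $k$-th row and column to be supported only on the diagonal entry: since row $k$ has $\ell^2$-norm $1$, $|b_{k,k}|=1$ gives $b_{k,j}=0$ for all $j\neq k$, and similarly column $k$ gives $b_{j,k}=0$ for all $j\neq k$. This is the structural obstruction that will conflict with the local translation property.

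The plan is then to combine this with the local translation property (Theorem~\ref{th7.4}) and the path formula of Theorem~\ref{thu2} to reach a contradiction. Since $n\geq 2$, there is some index $j\neq k$; assume first that $j>k$, so $\alpha_j>\beta_k$. I would choose $x\in(\alpha_k,\beta_k)$ and $t>0$ with $x+t\in(\alpha_j,\beta_j)$. By Theorem~\ref{thl1}(ii), equation~\eqref{eql1.1},
\[
\sum\{b_\omega : \omega\in\mathcal P_{x,t},\ \End(x,t,\omega)=x+t\}=1,
\]
so some admissible path $\omega=i_1i_2\cdots i_m$ must satisfy $i_1=k$, $\End(x,t,\omega)\in(\alpha_{i_m},\beta_{i_m})=(\alpha_j,\beta_j)$ (hence $i_m=j$), and $b_\omega=b_{k,i_2}b_{i_2,i_3}\cdots b_{i_{m-1},i_m}\neq 0$. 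But every factor must be nonzero; since $b_{k,\ell}=0$ for $\ell\neq k$, the first factor $b_{k,i_2}\neq 0$ forces $i_2=k$, then $b_{k,i_3}\neq 0$ forces $i_3=k$, and iterating gives $i_m=k$, contradicting $i_m=j$.

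For the remaining case $j<k$, the plan is to reduce to the positive-$t$ case via Proposition~\ref{prminu}: apply the same argument on $-\Omega$ with boundary matrix $B^*$, using that $|b^*_{k,k}|=|b_{k,k}|=1$ so the row/column vanishing above holds for $B^*$, and that the reflection $x\mapsto -x$ reverses the order of the intervals so the pair $(k,j)$ with $j<k$ on $\Omega$ becomes a pair in the correct order on $-\Omega$. The main obstacle I anticipate is simply keeping track of the sign of $t$ and the orientation of the intervals; but Proposition~\ref{prminu} absorbs that cleanly, leaving only the single one-shot application of the path formula above as the substantive step.
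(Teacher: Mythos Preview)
Your proof is correct and follows essentially the same approach as the paper: unitarity forces the $k$-th row and column of $B$ to vanish off the diagonal, and then equation~\eqref{eql1.1} produces an admissible path with nonzero weight whose indices are forced, step by step, to all equal $k$, contradicting $i_m=j\neq k$.

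The only difference is your handling of the case $j<k$. The paper does not split into cases: it simply takes any $j\neq i$ and any $t\in\br$ with $x+t\in(\alpha_j,\beta_j)$, and the iteration works uniformly. For $t<0$ the weight $b_\omega$ is by Definition~\ref{def2.4} equal to $b^*_{i_1,i_2}\cdots b^*_{i_{m-1},i_m}=\overline{b_{i_2,i_1}}\cdots\overline{b_{i_m,i_{m-1}}}$, so $b_\omega\neq 0$ forces $b_{i_2,k}\neq 0$, and the \emph{column} vanishing (which you already recorded) gives $i_2=k$, etc. Your detour through Proposition~\ref{prminu} is correct but unnecessary; you already have the ingredient that makes the negative-$t$ case go through directly.
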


\begin{proof}
	Suppose $|b_{i,i}|=1$ for some $1\leq i\leq n$. Then, since each row and column of $B$ has norm one, it follows that $b_{i,k}=0$ for all $1\leq k\leq n$ with $k\neq i$. 
	
	Take now $j\neq i$, $x\in (\alpha_i,\beta_i)$ and $t\in\br$ such that $x+t\in (\alpha_j,\beta_j)$. With \eqref{eql1.1}, 
	$$\sum\left\{b_\omega : \omega\in\mathcal P_{x,t}, \End(x,t,\omega)=x+t\right\}=1.$$

	However, if $\omega=i_1\dots i_m\in \mathcal P_{x,t}$ and $\End(x,t,\omega)=x+t$ then $i_1=i$, $i_m=j$. If, in addition $b_\omega\neq 0$, then $b_{i_1,i_2}\neq 0$ so $i_2$ has to be $i$. By induction, $i_3=\dots= i_m=i$ so $i=j$, a contradiction. 
	\end{proof}

	\begin{proposition}
		\label{prb1}
		Suppose $\Omega$ is spectral and $b_{i,j}=e^{2\pi i\theta_0}$ for some $i,j\in\{1,\dots,n\}$ and $\theta_0\in\br$. 
		\begin{enumerate}
			\item The spectrum $\Lambda$ of $\Omega$ is contained in $\frac{1}{\alpha_j-\beta_i}(\bz-\theta_0)$. 
			\item The sets $\{\Omega+k(\alpha_j-\beta_i)\bz: k\in\bz\}$ are disjoint. 
			\item Let $E$ be a subset of $\Omega$ of positive measure and $k\in\bz$. Let 
		$$\tilde \Omega=(\Omega\setminus E)\cup (E+k(\alpha_j-\beta_i)).$$	
		Then $\tilde \Omega$ is spectral with the same spectrum $\Lambda$.
		\end{enumerate}
	\end{proposition}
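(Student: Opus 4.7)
The plan is to exploit the fact that, since $B$ is unitary with $|b_{i,j}| = 1$, the entry $b_{i,j}$ is the unique non-zero entry in both row $i$ and column $j$ of $B$. Write $d := \alpha_j - \beta_i$. For (i), I would apply Theorem \ref{th7.3}: for each $\lambda \in \Lambda$, $Be_\lambda(\vec\alpha) = e_\lambda(\vec\beta)$; reading off the $i$-th coordinate collapses the sum to the single term $b_{i,j} e^{2\pi i \lambda \alpha_j} = e^{2\pi i\lambda \beta_i}$, which rearranges to $e^{2\pi i(\theta_0 + \lambda d)} = 1$, i.e.\ $\theta_0 + \lambda d \in \bz$, which is (i).

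For (ii), the central observation is that (i) forces $U(kd)$ to act as a unimodular scalar on $L^2(\Omega)$, and this is incompatible with the local translation property of Theorem \ref{th7.4} whenever $\Omega$ and $\Omega + kd$ overlap. Indeed, each $e_\lambda$ is an eigenvector of $D_B$ with eigenvalue $\lambda$, so $U(kd)e_\lambda = e^{2\pi i\lambda kd} e_\lambda$, and (i) yields $e^{2\pi i\lambda kd} = e^{-2\pi i k\theta_0}$, a constant independent of $\lambda$. Since $\{e_\lambda\}_{\lambda\in\Lambda}$ is an orthogonal basis of $L^2(\Omega)$, this gives $U(kd) = e^{-2\pi ik\theta_0} I$. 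Supposing for contradiction that $\mathfrak m(\Omega \cap (\Omega - kd)) > 0$ for some $k \neq 0$, I would choose $A \subseteq \Omega \cap (\Omega - kd)$ of positive measure and diameter smaller than $|kd|$, so that $A \cap (A + kd) = \emptyset$; then for $f = \chi_A$ and a.e.\ $x \in A$, the local translation property gives $(U(kd)f)(x) = f(x+kd) = 0$, while the scalar identity gives $(U(kd)f)(x) = e^{-2\pi ik\theta_0}$, contradicting $|e^{-2\pi ik\theta_0}| = 1$.

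For (iii), the plan is to construct a unitary $W: L^2(\Omega) \to L^2(\tilde\Omega)$ that carries each $e_\lambda$ to $e_\lambda|_{\tilde\Omega}$. By (ii), $E + kd$ is disjoint from $\Omega$, so $\tilde\Omega = (\Omega\setminus E) \sqcup (E + kd)$ with $\mathfrak m(\tilde\Omega) = \mathfrak m(\Omega)$. Setting
\[
W(f)(x) = \begin{cases} f(x), & x\in \Omega\setminus E,\\ e^{-2\pi i k\theta_0} f(x - kd), & x \in E + kd, \end{cases}
\]
the map $W$ is unitary since translation is measure-preserving and the modulating constant has modulus $1$. On $E + kd$ the identity
\[
W(e_\lambda)(x) = e^{-2\pi i k\theta_0} e^{2\pi i \lambda(x - kd)} = e^{-2\pi i k(\theta_0 + \lambda d)} e^{2\pi i\lambda x} = e^{2\pi i \lambda x}
\]
uses $\theta_0 + \lambda d \in \bz$ from (i); on $\Omega \setminus E$ the equality is trivial. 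Hence $W$ sends the orthogonal basis $\{e_\lambda\}_{\lambda\in\Lambda}$ of $L^2(\Omega)$ to the family $\{e_\lambda|_{\tilde\Omega}\}_{\lambda\in\Lambda}$, which is therefore an orthogonal basis of $L^2(\tilde\Omega)$, proving $\tilde\Omega$ is spectral with the same spectrum $\Lambda$.

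The main obstacle I anticipate is in (ii): because $U(kd)$ is a unimodular multiple of the identity, a careless choice of test function yields no information (taking $f \equiv 1$ only gives a contradiction when $e^{-2\pi ik\theta_0} \neq 1$). The key trick is to pick $A$ of small enough diameter that $A$ and $A + kd$ are disjoint, so that the local-translation value $0$ clashes with the scalar value of modulus $1$ uniformly in the phase.
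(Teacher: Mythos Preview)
Your arguments for (i) and (iii) are essentially identical to the paper's (the paper calls your operator $W$ by $\mathcal T$ and packages (iii) as a separate Lemma~\ref{lemmov}, but the construction and verification are the same).

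For (ii) you take a genuinely different route. The paper proves a stand-alone Lemma~\ref{lemdis}: assuming $\Lambda\subseteq\frac1a(\bz-\theta_0)$ and $E=\Omega\cap(\Omega+ka)$ has positive measure, it builds the nonzero function $f=e^{-2\pi ik\theta_0}\chi_E-\chi_{E-ak}$ and checks directly that $\ip{f}{e_\lambda}=0$ for every $\lambda\in\Lambda$, contradicting completeness. This is purely Fourier-analytic and works for an arbitrary spectral set, not just unions of intervals. Your argument instead exploits the unitary group: from (i) you get $U(kd)=e^{-2\pi ik\theta_0}I$, and then the local-translation property of Theorem~\ref{th7.4} forces $f(x+kd)=e^{-2\pi ik\theta_0}f(x)$ on the overlap, which you contradict with the small-diameter trick for $A$. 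Your approach is a nice illustration of how the scalar action of $U(kd)$ encodes the disjointness, and it stays closer to the paper's operator-theoretic theme; the paper's approach has the advantage of being self-contained (no appeal to $\{U(t)\}$ or to Theorem~\ref{th7.4}) and hence reusable in the general measurable setting. Both are correct.
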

	
	\begin{proof}
		By Theorem \ref{th7.3}, if $\lambda\in\Lambda$, then 
		\begin{equation}\label{eqb1.1}
			Be_\lambda(\vec\alpha)= e_\lambda(\vec\beta).
		\end{equation}
		
		 Since $|b_{i,j}|=1$ and $B$ is unitary, we have $b_{i,k}=0$ for all $k\neq j$. Then, considering the $i$-th component in \eqref{eqb1.1}: $b_{i,j}e_\lambda(\alpha_j)=e_\lambda(\beta_i)$, which means that $\theta_0+\lambda(\alpha_j-\beta_i)\in\bz$ and (i) follows. 
		
For (ii) we use a lemma:

\begin{lemma}
	\label{lemdis}
	If a set $\Omega$ is spectral with a spectrum $\Lambda$ contained in $\frac{1}{a}(\bz-\theta_0)$ for some $a\in\br$, $a\neq 0$, and $\theta_0\in\br$, then the sets 
	$\{\Omega+ka : k\in\bz\}$ are disjoint . 
\end{lemma}

\begin{proof}
	Suppose, for contradiction, that for some $k\in\bz$, $k\neq 0$, the set $E=\Omega\cap (\Omega+ka)$ has positive Lebesgue measure. Then $E\subset\Omega$ and $E-ak\subset\Omega$. Define the function $f=e^{-2\pi ik\theta_0}\chi_E-\chi_{E-ak}\in L^2(\Omega)$. For any $\lambda\in\Lambda$, we have 
	$$\ip{f}{e_\lambda}_{L^2(\Omega)}=\int_\Omega f(x)e^{-2\pi i\lambda x}\,dx=\int_E e^{-2\pi i(k\theta_0+\lambda x)}\,dx-\int_{E-ak}e^{-2\pi i\lambda x}\,dx$$
	$$=\int_Ee^{-2\pi i(k\theta_0+\lambda x)}\,dx -\int_{E-ak} e^{-2\pi i(k\theta_0+\lambda(x+ak))}\,dx\, (\mbox{since }(\lambda a +\theta_0) k\in\bz))$$
	$$=\int_Ee^{-2\pi i(k\theta_0+\lambda x)}\,dx-\int_Ee^{-2\pi i(k\theta_0+\lambda y)}\,dy=0.\, (\mbox{with the substitution }y=x+ak).$$

	Thus $\ip{f}{e_\lambda}=0$ for all $\lambda$ in the spectrum $\Lambda$; but this implies that $f=0$, a contradiction. 
	
\end{proof}

With (i) and Lemma \ref{lemdis}, we obtain (ii). 

For (iii), we use another lemma. 

\begin{lemma}
	\label{lemmov}
	Suppose $\Omega$ is a spectral set with spectrum $\Lambda$ contained in $\frac1a(\bz-\theta_0)$, $a\in \br$, $a\neq 0$, $\theta_0\in\br$. Let $E$ be a subset of $\Omega$ with positive measure. Let 
	$$\tilde \Omega=(\Omega\setminus E)\cup(E+ka).$$
	Then $\tilde \Omega$ has the same spectrum $\Lambda$. 
\end{lemma}

\begin{proof}
	Of course, we can assume that $k\neq 0$. 
	First of all, from Lemma \ref{lemdis}, we know that, $E+ka$ is disjoint from $\Omega$. 
	
	Define the operator $\mathcal T:L^2(\Omega)\to L^2(\tilde\Omega)$, 
	\begin{equation}
		\label{eqmov}
		\mathcal Tf(x)=\left\{
		\begin{array}{cc}
			f(x),& x\in\Omega\setminus E,\\
			e^{-2\pi i k\theta_0}f(x-ka),&x\in E+ka.
			\end{array}\right.
	\end{equation}
	
	It is clear that the operator is unitary. Also, for all $\lambda\in\Lambda$, $\mathcal T e_\lambda=e_\lambda$, since $k(\lambda a+\theta_0)\in\bz$. Thus $\Lambda$ is a spectrum for $\tilde \Omega$. 
\end{proof}

With Lemma \ref{lemmov}, the proof of Proposition \ref{prb1} is complete. 
	\end{proof}
\begin{corollary}
	\label{cormov}
		Suppose $\Omega$ is spectral and $b_{i,j}=e^{2\pi i\theta_0}$ for some $i,j\in\{1,\dots,n\}$ and $\theta_0\in\br$. Let $\tilde\Omega$ be the union of intervals obtained from $\Omega$ by moving the interval $(\alpha_j,\beta_j)$ at the end of the interval $(\alpha_i,\beta_i)$; so 
		$$\tilde \Omega=\bigcup_{k\neq i,j}(\alpha_k,\beta_k)\bigcup (\alpha_i,\alpha_i+l_i+l_j).$$
		Then $\tilde \Omega$ has the same spectrum as $\Omega$. 
		Similarly, when the interval $(\alpha_i,\beta_i)$ is moved at the beginning of the interval $(\alpha_j,\beta_j)$.
	
		\end{corollary}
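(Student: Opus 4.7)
The plan is to reduce the corollary to a direct application of Proposition \ref{prb1}(iii), using a well-chosen subset $E$ and integer shift $k$. Specifically, for the first assertion (moving interval $j$ to the right end of interval $i$), I take
$$E=(\alpha_j,\beta_j)\subset\Omega,\qquad k=-1,$$
so that $E+k(\alpha_j-\beta_i)=E+(\beta_i-\alpha_j)=(\beta_i,\beta_i+l_j)$. By Proposition \ref{prb1}(iii), the set
$$\tilde\Omega_0:=(\Omega\setminus (\alpha_j,\beta_j))\cup (\beta_i,\beta_i+l_j)$$
is spectral with the same spectrum $\Lambda$ as $\Omega$. It then remains to identify $\tilde\Omega_0$ with the $\tilde\Omega$ written in the statement, which amounts to the observation that
$$(\alpha_i,\beta_i)\cup(\beta_i,\beta_i+l_j)=(\alpha_i,\alpha_i+l_i+l_j)\setminus\{\beta_i\},$$
so the two sets agree up to a single point and hence produce the same $L^2$ space and the same spectrum.

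The only real thing to check is that the union $\tilde\Omega_0$ is a valid disjoint union of open intervals, i.e., that the translated block $(\beta_i,\beta_i+l_j)$ does not overlap any of the remaining intervals of $\Omega$. This is exactly the content of Proposition \ref{prb1}(ii): the translates of $\Omega$ by integer multiples of $\alpha_j-\beta_i$ are pairwise disjoint, so in particular $\Omega$ and $\Omega+(\beta_i-\alpha_j)$ are disjoint; since $(\beta_i,\beta_i+l_j)\subset \Omega+(\beta_i-\alpha_j)$, it is automatically disjoint from $\Omega\setminus(\alpha_j,\beta_j)$. This is the step I expect to be the main (mild) obstacle, since without it the proposed $\tilde\Omega$ might fail to be a disjoint union of intervals.

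For the second assertion (moving interval $i$ to the left end of interval $j$), the same argument applies with the symmetric choice
$$E=(\alpha_i,\beta_i),\qquad k=+1,$$
so that $E+(\alpha_j-\beta_i)=(\alpha_j-l_i,\alpha_j)$, and Proposition \ref{prb1}(iii) gives that
$$(\Omega\setminus(\alpha_i,\beta_i))\cup(\alpha_j-l_i,\alpha_j)$$
is spectral with spectrum $\Lambda$; combining the adjacent intervals $(\alpha_j-l_i,\alpha_j)$ and $(\alpha_j,\beta_j)$ into the single interval $(\alpha_j-l_i,\beta_j)$ (again, up to one point) yields the claim.
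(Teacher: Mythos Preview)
Your proof is correct and follows exactly the same approach as the paper's: the paper's entire proof reads ``This follows from Proposition \ref{prb1}, with $E=(\alpha_j,\beta_j)$,'' and you have simply spelled out the choice $k=-1$ (respectively $k=+1$ for the second assertion), the identification of the resulting set with $\tilde\Omega$ up to a single point, and the disjointness check via part (ii). Your added details are all accurate and make the one-line proof explicit.
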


\begin{proof}
This follows from Proposition \ref{prb1}, with $E=(\alpha_j,\beta_j)$.
\end{proof}

	\section{Multiplicative unitary groups}\label{sec4}
	In this section we will impose some extra restrictions on the unitary group $\{U(t)\}$ and see how this affects the geometry of $\Omega$ and the boundary matrix $B$. The first one, the multiplicative condition, was proposed by Segal to Fuglede, in order to obtain a more precise form of Fuglede's result Theorem \ref{th2.1}. Fuglede proved in his seminal paper \cite{Fug74} that (in any dimension) if a connected set $\Omega$ is spectral and the associated unitary group satisfies the multiplicative condition, then $\Omega$ is a fundamental domain for a lattice and the spectrum of $\Omega$ is the dual lattice. We will prove that if our union of intervals is spectral and the unitary group is multiplicative, then $\Omega$ tiles with $L\bz$, where $L=\mathfrak m(\Omega)$, $\Omega$ has spectrum $\frac1L\bz$, and we obtain detailed information about the matrix $B$ and the geometry of $\Omega$.  
	\begin{definition}
		\label{defm1}
		For $t\in\br$, we say that $U(t)$ is {\it multiplicative} if for all $f,g\in L^2(\Omega)$ with $fg\in L^2(\Omega)$, 
		$$U(t)(fg)=U(t)f\cdot U(t) g.$$
		We also define the set 
		$$\mathscr T_B=\{t\in\br : U(t)\mbox{ is multiplicative}\}.$$ 
		If $\mathscr T_B=\br$ then we say that the unitary group $\{U(t)\}$ is {\it multiplicative}. 
	\end{definition}
	
	\begin{lemma}
		\label{lemm2}
		Let $t\in\br$. $U(t)$ is multiplicative if and only if for any $f,g\in L^\infty(\Omega)$, $U(t)fg=U(t)f\cdot U(t)g$.
	\end{lemma}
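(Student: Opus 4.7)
The forward direction is immediate because $\Omega$ has finite Lebesgue measure, so $L^\infty(\Omega)\subset L^2(\Omega)$ and any product of two $L^\infty$ functions is again in $L^\infty(\Omega)\subset L^2(\Omega)$; thus the multiplicativity hypothesis on $L^2$ with $fg\in L^2$ covers the $L^\infty$ case. The content is the reverse direction, where we must boost multiplicativity on $L^\infty$ to multiplicativity on all $f,g\in L^2(\Omega)$ with $fg\in L^2(\Omega)$. The natural tool is truncation plus a subsequence argument, using the fact that $U(t)$ is a bounded (in fact unitary) operator on $L^2(\Omega)$.

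Concretely, for $f,g\in L^2(\Omega)$ with $fg\in L^2(\Omega)$, I would set $f_n=f\cdot\chi_{\{|f|\le n\}}$ and $g_n=g\cdot\chi_{\{|g|\le n\}}$; both sequences lie in $L^\infty(\Omega)$, converge pointwise a.e.\ to $f$ and $g$, and converge in $L^2(\Omega)$ by dominated convergence (using $|f_n|\le|f|$, $|g_n|\le|g|$). Moreover $f_n g_n \to fg$ pointwise a.e.\ and $|f_n g_n|\le|fg|\in L^2(\Omega)$, so $f_n g_n\to fg$ in $L^2(\Omega)$ by dominated convergence. Applying the hypothesis, for every $n$ we have the $L^2$-identity
\[
U(t)(f_n g_n) = U(t)f_n \cdot U(t)g_n.
\]

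Now I would pass to the limit. By continuity of $U(t)$ on $L^2(\Omega)$, the left-hand side converges to $U(t)(fg)$ in $L^2(\Omega)$, and $U(t)f_n\to U(t)f$, $U(t)g_n\to U(t)g$ in $L^2(\Omega)$. Extracting successive subsequences I can arrange pointwise a.e.\ convergence of $U(t)(f_{n_k}g_{n_k})$ to $U(t)(fg)$ and of $U(t)f_{n_k}$, $U(t)g_{n_k}$ to $U(t)f$, $U(t)g$ respectively. Taking the pointwise limit in the identity above yields $U(t)(fg)(x)=U(t)f(x)\cdot U(t)g(x)$ almost everywhere, which is the required equality in $L^2(\Omega)$.

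There is no real obstacle here; the only mild care-point is that products of $L^2$ functions need not be $L^2$, which is precisely why the hypothesis of the lemma restricts to pairs with $fg\in L^2(\Omega)$ and why one must truncate both factors simultaneously before invoking the $L^\infty$ multiplicativity.
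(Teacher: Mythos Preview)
Your proof is correct and follows essentially the same approach as the paper: truncation $f_n=f\chi_{\{|f|\le n\}}$, $g_n=g\chi_{\{|g|\le n\}}$, dominated convergence to get $f_n\to f$, $g_n\to g$, $f_ng_n\to fg$ in $L^2(\Omega)$, then continuity of $U(t)$ and extraction of an a.e.\ convergent subsequence to pass to the limit in the identity $U(t)(f_ng_n)=U(t)f_n\cdot U(t)g_n$.
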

	
	\begin{proof}
	The direct implication is clear. For the converse, let $f,g\in L^2(\Omega)$ with $fg\in L^2(\Omega)$. Let 
	$$f_n=f\chi_{\{x\in \Omega: |f(x)|\leq n\}},\quad g_n=g\chi_{\{x\in \Omega: |g(x)|\leq n\}},\quad(n\in\bn).$$
	Then $\lim f_n=f$ pointwise and $|f_n-f|^2\leq |f|^2$, therefore, by the Lebesgue Dominated Convergence Theorem, $\lim f_n=f$ in $L^2(\Omega)$. The same is true for $\{g_n\}$. 
	
 Also, $\lim f_ng_n=fg$ pointwise and $|f_ng_n-fg|^2\leq |2fg|^2$, and with the Lebesgue Dominated Convergence Theorem, $\lim f_ng_n=fg$ in $L^2(\Omega)$. 
 
 Then, $U(t)(f_ng_n)$ converges to $U(t)(fg)$ in $L^2(\Omega)$. On the other hand, $U(t)f_n$ converges to $U(t)f$ in $L^2(\Omega)$ and similarly for $U(t)g_n$. Therefore, replacing the sequence by a subsequence, we get that $\lim U(t)f_n=U(t)f$ and $\lim U(t)g_n=U(t)g$ pointwise, and so 
 $$\lim U(t)(f_ng_n)=\lim U(t)f_n\cdot U(t)g_n=U(t)f\cdot U(t)g\mbox{ pointwise a.e.}.$$
 Thus $U(t)fg=U(t)f\cdot U(t)g.$
	
	\end{proof}
	\begin{proposition}
		\label{prm2}
		The set $\mathscr T_B$ is a closed subgroup of $\br$, so $\mathscr T_B=\br$ or $\mathscr{T}_B=p\bz$ for some $p\geq 0$. 
	\end{proposition}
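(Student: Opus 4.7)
My plan is to establish that $\mathscr T_B$ is (a) a subgroup of $(\br,+)$ and (b) topologically closed in $\br$; the classification of closed subgroups of $\br$ (each is $\{0\}$, $\br$, or $a\bz$ for some $a > 0$) will then immediately give the conclusion.

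The identity $0$ lies in $\mathscr T_B$ because $U(0) = I$ trivially satisfies $I(fg) = If \cdot Ig$. For closure under addition, suppose $t, s \in \mathscr T_B$ and $f, g \in L^2(\Omega)$ with $fg \in L^2(\Omega)$. Multiplicativity of $U(s)$ gives $U(s)(fg) = U(s)f \cdot U(s)g$, and the product on the right therefore lies in $L^2$ (it equals $U(s)(fg)$, which is in $L^2$ by unitarity). This lets me apply multiplicativity of $U(t)$ to the pair $(U(s)f,\, U(s)g)$, yielding
$$
U(t+s)(fg) \;=\; U(t)\bigl(U(s)f\cdot U(s)g\bigr) \;=\; U(t+s)f\cdot U(t+s)g,
$$
so $t+s \in \mathscr T_B$.

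For closure under negation I would invoke the Koopman representation of a multiplicative unitary on $L^2$ of a finite measure space. Specifically, applying multiplicativity to $1 = 1\cdot 1$ and $\chi_E = \chi_E^2$ forces $U(t)1 = 1$ and $U(t)\chi_E = \chi_{F_t(E)}$ for some measurable $F_t(E)\subset \Omega$ with $\mathfrak m(F_t(E)) = \mathfrak m(E)$ (using $L^2$-isometry). Multiplicativity then makes $E \mapsto F_t(E)$ a measure-preserving Boolean homomorphism on the Borel $\sigma$-algebra of $\Omega$. Since $\Omega$ is a standard Borel space, a classical theorem lifts this to an almost-everywhere defined measure-preserving point transformation $T_t:\Omega \to \Omega$ with $U(t)f = f\circ T_t$ on $L^\infty$ and, by density, on all of $L^2$. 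Unitarity of $U(t)$ (not merely isometry) makes $T_t$ a.e.\ invertible, and $U(-t) = U(t)^{-1}$ is then the Koopman operator of $T_t^{-1}$, which is automatically multiplicative; hence $-t \in \mathscr T_B$.

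For topological closedness, the key tool is that every multiplicative unitary is automatically an $L^\infty$-isometry. Iterating multiplicativity on $L^\infty$ gives $U(t)(f^{2^k}) = (U(t)f)^{2^k}$, so $\|U(t)f\|_{2^{k+1}} = \|f\|_{2^{k+1}}$ for all $k\geq 0$, and letting $k\to\infty$ on the finite-measure set $\Omega$ yields $\|U(t)f\|_\infty = \|f\|_\infty$ (together with $U(t)f \in L^\infty$, by the standard Chebyshev bound on $\{|U(t)f| > M\}$). Now if $t_n \in \mathscr T_B$ with $t_n \to t$ and $f, g \in L^\infty(\Omega)$, then strong continuity gives $U(t_n)f \to U(t)f$ and $U(t_n)g \to U(t)g$ in $L^2$, hence pointwise a.e.\ along a subsequence; since $|U(t_n)f\cdot U(t_n)g| \leq \|f\|_\infty\|g\|_\infty$ uniformly, dominated convergence on the finite-measure set $\Omega$ gives $U(t_n)f\cdot U(t_n)g \to U(t)f\cdot U(t)g$ in $L^2$. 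Combining with $U(t_n)(fg) \to U(t)(fg)$, the identity $U(t_n)(fg) = U(t_n)f\cdot U(t_n)g$ passes to the limit, and Lemma \ref{lemm2} then yields $t \in \mathscr T_B$. The main obstacle is the third paragraph — cleanly justifying the Koopman representation and the a.e.\ invertibility of $T_t$ — which I would handle by citing standard ergodic-theoretic results rather than rebuilding them from scratch.
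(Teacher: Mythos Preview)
Your proof is correct, but it takes a substantially heavier route than the paper's, and in one place you work much harder than necessary even within your own framework.

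The paper's argument rests on a single structural fact specific to this setting: by the explicit formula in Theorem~\ref{thu2}, each $U(s)$ maps $L^\infty(\Omega)$ into $L^\infty(\Omega)$. With that in hand, and using Lemma~\ref{lemm2} to reduce to $L^\infty$ test functions, closure under addition is immediate, and closure under negation is the one-line computation
\[
U(t)\bigl[U(-t)f\cdot U(-t)g\bigr]=U(t)U(-t)f\cdot U(t)U(-t)g=fg,
\]
valid because $U(-t)f,\,U(-t)g\in L^\infty$ and $t\in\mathscr T_B$. Applying $U(-t)$ to both sides gives $-t\in\mathscr T_B$. Closedness is handled exactly as you do, the $L^\infty$ bounds coming for free from Theorem~\ref{thu2}.

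Your argument, by contrast, never invokes Theorem~\ref{thu2} and is therefore more general: it would prove the same statement for any strongly continuous unitary group on $L^2$ of a standard finite measure space. The price is the Koopman-representation machinery you bring in for negation. But notice that this detour is avoidable even on your own terms: the $L^\infty$-isometry you establish in the closedness step (via $U(t)(f^{2^k})=(U(t)f)^{2^k}$) already gives $U(-t)f,\,U(-t)g\in L^\infty$, and then the paper's one-line trick above finishes negation without any appeal to point realizations or a.e.\ invertibility of $T_t$. So your third paragraph can be replaced entirely by two sentences once your fourth paragraph is in hand.
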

	
	\begin{proof}
$\mathscr T_B$ is nonempty because $U(0)=I_{L^2(\Omega)}$ so $0\in\mathscr T_B$.		We use Lemma \ref{lemm2}.
		Let $t,s\in\mathscr T_B$. Thus, for all $f,g\in L^\infty(\Omega)$, we have 
		$$U(s+t)(fg)=U(s)[U(t)(fg)]=U(s)[U(t)f\cdot U(t) g]=U(s)U(t)f\cdot U(s)U(t)g=U(s+t)f\cdot U(s+t)g.$$
		This means that $t+s\in\mathscr T_B$. Note that, by Theorem \ref{thu2}, if $h\in L^\infty(\Omega)$ then $U(s)h\in L^\infty(\Omega)$, for any $s\in\br$. 
	
	If $f,g\in L^\infty(\Omega)$, then $U(-t)f,U(-t)g\in L^\infty(\Omega)$. Then 
	$$U(t)[U(-t)f\cdot U(-t)g]=U(t)U(-t)f\cdot U(t)U(-t)g=fg,$$
	therefore $U(-t)f\cdot U(-t)g=U(-t)(fg)$. This shows that $-t\in\mathscr T_B$ and $\mathscr T_B$ is a group. 
	
	To see that $\mathscr T_B$ is closed, let $\{t_n\}$ be a sequence in $\mathscr T_B$ with $t_n\to t$ for some $t\in\br$. Since $\{U(t)\}$ is strongly continuous, we know that, for $f,g\in L^\infty(\Omega)$, 
	$U(t)(fg)=\lim U(t_n)(fg)$ in $L^2(\Omega)$. Also $\lim U(t_n)(fg)=\lim U(t_n)f\cdot U(t_n)g=U(t)f\cdot U(t)g$ in $L^2(\Omega)$. Therefore $U(t)(fg)=U(t)f\cdot U(t)g$ and $t\in \mathscr T_B$. 
	
	\end{proof}

	\begin{definition}
		\label{defm3}
		
		For a permutation $\sigma\in S_n$ -- the symmetric group on $n$ symbols --  we denote by $P^\sigma$ the permutation matrix obtained from the identity matrix $I_n$ by permuting the columns according to the permutation $\sigma$, 
		$$P^\sigma_{i,j}=\left\{\begin{array}{cc}
			1&,\mbox{ if }\sigma(i)=j,\\
			0&,\mbox{ otherwise.}
		\end{array}\right.$$
	\end{definition}

	\begin{theorem}\label{thm4}
		The set $\mathscr T_B=\br$ if and only if $B$ is a permutation matrix. 
	\end{theorem}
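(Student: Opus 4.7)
The plan is to exploit the explicit path-sum formula in Theorem \ref{thu2} together with its local version in Corollary \ref{cor4.5}, and to use Lemma \ref{lemm2} to reduce the multiplicativity condition to test functions in $L^\infty(\Omega)$. Both directions will follow by inspecting what the path-sum does to pointwise products of characteristic functions.

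For the ``if'' direction, suppose $B = P^\sigma$ for some $\sigma \in S_n$. Since each entry $b_{i,j}$ is $0$ or $1$, the weight $b_\omega = b_{i_1,i_2}\cdots b_{i_{m-1},i_m}$ of a path $\omega = i_1 \cdots i_m$ equals $1$ exactly when $i_{k+1} = \sigma(i_k)$ at every step, and $0$ otherwise. For fixed $x \in (\alpha_i,\beta_i)$ and $t > 0$, the sequence $i, \sigma(i), \sigma^2(i),\ldots$ is determined, and the admissibility condition on $t$ pins down a unique admissible path $\omega_0 \in \mathcal{P}_{x,t}$ with $b_{\omega_0} = 1$. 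Thus $[U(t)f](x) = f(\End(x,t,\omega_0))$, i.e.\ $U(t)$ is pure evaluation at a single point. Multiplicativity is then immediate:
$$[U(t)(fg)](x) = (fg)(\End(x,t,\omega_0)) = [U(t)f](x)\cdot [U(t)g](x).$$
The case $t<0$ follows identically using Theorem \ref{th2.5}, or via Proposition \ref{prminu} applied to $B^* = P^{\sigma^{-1}}$, which is again a permutation matrix.

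For the ``only if'' direction, assume $\mathscr T_B = \br$. Fix $i \in \{1,\dots,n\}$, pick $x \in (\alpha_i,\beta_i)$ near $\beta_i$ and $t > 0$ such that $s := t-(\beta_i - x) \in (0,\lmin)$. By Corollary \ref{cor4.5}, for a.e.\ such $x$,
$$[U(t)f](x) = \sum_{j=1}^{n} b_{i,j}\,f(\alpha_j + s).$$
For indices $j,k$ and small $\delta > 0$, test with $f = \chi_{(\alpha_j,\alpha_j+\delta)}$ and $g = \chi_{(\alpha_k,\alpha_k+\delta)}$, both in $L^\infty(\Omega)$. On the positive-measure set of $x$'s with $s \in (0,\delta)$, one computes $[U(t)f](x) = b_{i,j}$ and $[U(t)g](x) = b_{i,k}$. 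When $j \neq k$, $fg \equiv 0$ so $U(t)(fg) = 0$, and multiplicativity forces $b_{i,j}b_{i,k} = 0$. When $j = k$, $fg = f$, yielding $b_{i,j}^2 = b_{i,j}$ and hence $b_{i,j} \in \{0,1\}$. Consequently each row of $B$ has at most one nonzero entry, which must equal $1$; together with unitarity of $B$ (which forces norm-one rows and columns), $B$ is a permutation matrix.

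The only real subtlety is that the path-sum formula holds only almost everywhere for a fixed $t$, while the multiplicativity hypothesis is an $L^2$ identity. This is handled painlessly: since characteristic functions are used and the derived identities on the scalars $b_{i,j}$ must hold on a set of positive measure, they are genuine equalities of constants, and no measure-zero exceptional set can interfere. I expect no deeper obstacle beyond this routine bookkeeping.
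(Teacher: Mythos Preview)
Your proposal is correct and follows essentially the same approach as the paper. The only cosmetic differences are: in the ``if'' direction you apply the full path-sum formula of Theorem~\ref{thu2} for every $t>0$ directly (observing that for $B=P^\sigma$ exactly one admissible path carries weight $1$), whereas the paper checks multiplicativity only for $0<t<\lmin$ via Corollary~\ref{cor4.5} and then invokes the closed-subgroup property of Proposition~\ref{prm2}; in the ``only if'' direction you test with characteristic functions $\chi_{(\alpha_j,\alpha_j+\delta)}$ while the paper uses slightly more general $L^\infty$ functions satisfying $f_{j_0}(\alpha_j+s)=\delta_{j,j_0}$, but the algebraic conclusion $b_{i,j}b_{i,k}=0$ and $b_{i,j}^2=b_{i,j}$ is reached identically.
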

	
	\begin{proof}
		Suppose that $B=P^\sigma$ for some permutation $\sigma\in S_n$. For $f\in L^\infty(\Omega)$, and for almost every $x\in (\alpha_i,\beta_i)$ and $0<t<\lmin=\min_k l_k$, by Corollary \ref{cor4.5}, we have that 
		$$[U(t)f](x)=\left\{\begin{array}{cc}
		f(x+t),&\mbox{ if }(\beta_i-x)>t,\\
		f(\alpha_{\sigma(i)}+t-(\beta_i-x)),&\mbox{ if } (\beta_i-x)<t,
		\end{array}\right.$$
	Then, for all $f,g\in L^\infty(\Omega)$, and for a.e. $x\in(\alpha_i,\beta_i)$, 
	
		$$[U(t)](fg)=\left\{\begin{array}{cc}
			f(x+t)g(x+t),&\mbox{ if }(\beta_i-x)>t,\\
			f(\alpha_{\sigma(i)}+t-(\beta_i-x))g(\alpha_{\sigma(i)}+t-(\beta_i-x)),&\mbox{ if } (\beta_i-x)<t,
		\end{array}\right.$$
		$$=U(t)f\cdot U(t)g.$$
		
		Thus $U(t)$ is multiplicative for $0<t<\lmin$. With Proposition \ref{prm2}, we get that $\mathscr T_B=\br$.

		Conversely, assume that $\mathscr T_B=\br$. Let $0<t<\lmin$ and $f,g\in L^\infty(\Omega)$. Then for a.e.  $x\in (\alpha_i,\beta_i)$ with $t-(\beta_i-x)<\lmin$, by Corollary \ref{cor4.5}, we have
		$$[U(t)(fg)](x)=\sum_{j=1}^n b_{i,j}f(\alpha_j+t-(\beta_i-x))g(\alpha_j+t-(\beta_i-x)).$$
 Since $U(t)$ is multiplicative we must also have that 
		$$[U(t)(fg)](x)=[U(t)f](x)\cdot [U(t)g](x)$$
		$$=\sum_{j=1}^n b_{i,j}f(\alpha_j+t-(\beta_i-x))\cdot \sum_{j=1}^nb_{i,j} g(\alpha_j+t-(\beta_i-x))$$
		$$=\sum_{j,j'=1}^n b_{i,j}b_{i,j'}f(\alpha_j+t-(\beta_i-x))g(\alpha_{j'}+t-(\beta_i-x)).$$
		For each $1\leq j_0\leq n$ we can choose $f_{j_0}\in L^\infty(\Omega)$ such that $f_{j_0}(\alpha_j+t-(\beta_i-x))=\delta_{j,j_0}$ for $x\in (\alpha_i,\beta_i)$ with $t-(\beta_i-x)<\lmin$ for all $1\leq j\leq n$. Using $f=f_{j_0}$, $g=f_{j_0'}$, for some $j_0\neq j_0'$ in $\{1,\dots,n\}$, in the two formulas above, we obtain 
		$$b_{i,j_0}b_{i,j_0'}=[U(t)(f_{j_0}f_{j_0'})](x)=0.$$
		This means that there can be at most one index $1\leq j_0\leq n$ with $b_{i,j}\neq 0$. If we use $f=g=f_{j_0}$ in the two formulas above we obtain 
		$$b_{i,j_0}=[U(t)(f_{j_0}^2)](x)=(b_{i,j_0})^2,$$ 
		implying that $b_{i,j_0}=1$. We denote by $\sigma(i):=j_0$. 
		
		Suppose  now that for $i\neq i'$ we have $\sigma(i)=\sigma(i')=j$. This means that $b_{i,j}=b_{i',j}=1$. This is impossible since $B$ is unitary and so the $j$-th column has norm 1. Thus $\sigma$ is a permutation and $B=P^\sigma$. 
	\end{proof}
	
	\begin{definition}
		\label{defm4}
		Let $L>0$. Two measurable subsets $A$ and $B$ are said to be {\it translation congruent modulo} $L\bz$, if there exist measurable partitions $\{A_i\}_{i\in I}$ of $A$ and $\{B_i\}_{i\in I}$ of $B$, with $I$ at most countable, and integers $k_i\in \bz$ such that $A_i+k_i=B_i$ up to measure zero. 
	\end{definition}
	
	\begin{theorem}
		\label{thm5}
		Suppose $\Omega$ is spectral and $\{U(t)\}$ is multiplicative. 
		\begin{enumerate}
			\item The matrix $B=P^\sigma$ where $P$ is a cycle of length $n$. 
			\item Let $L:=\mathfrak m(\Omega)=l_1+\dots+l_n$. The spectrum $\Lambda=\frac{1}{L}\bz$ and $\Omega$ tiles with tiling set $L\bz$. 
			\item Translate the intervals of $\Omega$ as follows: move the interval $(\alpha_{\sigma(1)},\beta_{\sigma(1)})$ at the end of $(\alpha_1,\beta_1)$, that is 
			$$(\alpha_{\sigma(1)},\beta_{\sigma(1)})+(\beta_1-\alpha_{\sigma(1)})=(\beta_1,\alpha_1+l_1);$$
			move the interval  $(\alpha_{\sigma^2(1)},\beta_{\sigma^2(1)})$ at the end of the previous one, that is 
			$$(\alpha_{\sigma^2(1)},\beta_{\sigma^2(1)})+(\beta_1-\alpha_{\sigma(1)})+(\beta_{\sigma(1)}-\alpha_{\sigma^2(1)})=(\alpha_1+l_1,\alpha_1+l_1+l_2);$$
			and so on, at the end move the interval  $(\alpha_{\sigma^{n-1}(1)},\beta_{\sigma^{n-1}(1)})$ at the end of the previous one, that is 
				$$(\alpha_{\sigma^{n-1}(1)},\beta_{\sigma^{n-1}(1)})+(\beta_1-\alpha_{\sigma(1)})+(\beta_{\sigma(1)}-\alpha_{\sigma^2(1)})+\dots+(\beta_{\sigma^{n-2}(1)}-\alpha_{\sigma^{n-1}(1)})$$$$=(\alpha_1+l_1+l_2+\dots+l_{n-1},\alpha_1+l_1+l_2+\dots+l_n).$$
The union is $(\alpha_1,\alpha_1+L)$ (except for finitely many endpoints). All the translations in the previous moves are in $L\bz$. They show how $\Omega$ is translation congruent to $(\alpha_1,\alpha_1+L)$ modulo $L\bz$. 
		\end{enumerate}
	\end{theorem}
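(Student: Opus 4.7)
The plan is to prove (i) first, then derive (ii) and (iii) together as essentially a consequence of the cycle structure combined with the spectrality of $\Omega$.

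\textbf{Part (i).} Multiplicativity together with Theorem \ref{thm4} immediately gives $B = P^\sigma$ for some $\sigma \in S_n$, so the boundary condition $Bf(\vec\alpha) = f(\vec\beta)$ reads $f(\alpha_{\sigma(i)}) = f(\beta_i)$. An eigenvector $f = e^{2\pi i\lambda x}\sum c_j\chi_{(\alpha_j,\beta_j)}$ of $D_B$ therefore satisfies the recursion
\begin{equation*}
c_{\sigma(i)} = c_i\, e^{2\pi i\lambda(\beta_i - \alpha_{\sigma(i)})}, \qquad i = 1,\dots,n.
\end{equation*}
Decomposing $\sigma$ into disjoint cycles $C_1,\dots,C_s$ with lengths $L_\ell = \sum_{i\in C_\ell} l_i$, a single traversal of cycle $C_\ell$ forces the consistency condition $\lambda L_\ell \in \bz$; moreover the $\lambda$-eigenspace decomposes as a direct sum across cycles and has dimension equal to $\#\{\ell : \lambda L_\ell \in \bz\}$. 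If $s\geq 2$, apply this with $\lambda_0 = 1/L_1 \neq 0$: either some other $C_\ell$ also contributes and the eigenspace has dimension $\geq 2$, or only $C_1$ contributes and the unique (up to scalar) eigenvector has $c_i\neq 0$ for $i\in C_1$ and $c_i=0$ for $i\notin C_1$. Both cases contradict Theorem \ref{th7.3}, since spectrality forces every eigenspace to be either $\{0\}$ or $\bc(1,\dots,1)$. Hence $s=1$ and $\sigma$ is a single $n$-cycle.

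\textbf{Parts (ii) and (iii).} With $\sigma$ an $n$-cycle, $L_1 = l_1 + \dots + l_n = L$, so $\Lambda = \sigma(D_B) = \frac{1}{L}\bz$, proving the spectrum claim in (ii). Writing the cycle starting at $1$ as $1, \sigma(1),\ldots,\sigma^{n-1}(1)$ and iterating the recursion, $c_{\sigma^k(1)} = c_1\exp(2\pi i\lambda T_k)$, where
\begin{equation*}
T_k \;=\; \sum_{j=1}^{k}\bigl(\beta_{\sigma^{j-1}(1)} - \alpha_{\sigma^j(1)}\bigr)
\end{equation*}
is exactly the cumulative translation described in (iii). By Theorem \ref{th7.3}, spectrality of $\Omega$ forces $c_{\sigma^k(1)} = c_1$ for every $k$ and every $\lambda \in \Lambda$; evaluating at $\lambda = 1/L$ yields $T_k \in L\bz$ for $k=1,\dots,n-1$, which is the translation assertion in (iii). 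A direct computation shows $\alpha_{\sigma^k(1)} + T_k = \alpha_1 + l_1 + l_{\sigma(1)} + \dots + l_{\sigma^{k-1}(1)}$, so the translated intervals concatenate (up to finitely many endpoints) to $(\alpha_1,\alpha_1+L)$, establishing the translation-congruence statement in (iii). For the tiling claim in (ii), the interval $(\alpha_1,\alpha_1+L)$ tiles $\br$ by $L\bz$; since each piece of $\Omega$ is translated into it by an element of $L\bz$, the same partition shows that $\Omega$ itself tiles $\br$ by $L\bz$.

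\textbf{Main obstacle.} The substantial step is the cycle-decomposition dichotomy in part (i): one must simultaneously rule out the \emph{non-overlapping spectra} scenario (in which one cycle produces a one-dimensional but non-constant eigenvector) and the \emph{overlapping spectra} scenario (in which two cycles produce a two-dimensional eigenspace). The key leverage is the rigid form of Theorem \ref{th7.3} (the eigenspace must be \emph{exactly} $\bc(1,\dots,1)$ or $\{0\}$), which kills both scenarios at once. Once (i) is in hand, parts (ii) and (iii) reduce to walking around the unique cycle and transcribing the spectrality constraint into a divisibility condition on the cumulative gap sums.
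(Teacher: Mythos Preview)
Your proof is correct, and it takes a genuinely different route from the paper's.

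For part (i), the paper uses the unitary-group machinery: it invokes Theorem~\ref{thl1} (the formula \eqref{eql1.1} for sums of $b_\omega$ along admissible paths) to show that for any pair $i<j$ there is a path $i_1\dots i_m$ with $i_1=i$, $i_m=j$ and $b_\omega\neq 0$, which for a permutation matrix forces $j=\sigma^{m-1}(i)$ and hence a single cycle. Your argument is purely spectral-theoretic: you read off the eigenvector recursion $c_{\sigma(i)}=c_i e^{2\pi i\lambda(\beta_i-\alpha_{\sigma(i)})}$ directly from $BE(\lambda\vec\alpha)c=E(\lambda\vec\beta)c$, decompose by cycles, and use the rigidity in Theorem~\ref{th7.3} (eigenspace is $\{0\}$ or $\bc(1,\dots,1)$) to rule out $s\ge 2$ via a single well-chosen eigenvalue $\lambda_0=1/L_1$. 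This avoids the path formalism entirely.

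For parts (ii) and (iii), the paper introduces the dual group $\Lambda^*=\{t:t\lambda\in\bz\text{ for all }\lambda\in\Lambda\}$, appeals to Proposition~\ref{prb1} to place each $\alpha_{\sigma(i)}-\beta_i$ in $\Lambda^*$, and then uses Lemma~\ref{lemmov} iteratively to transport the spectrum to the interval $(\alpha_1,\alpha_1+L)$, finally identifying $\Lambda=\frac1L\bz$ from the known spectra of intervals together with $0\in\Lambda$. You instead compute $\sigma(D_B)=\frac1L\bz$ directly from the single-cycle consistency condition, and obtain $T_k\in L\bz$ by evaluating the recursion at $\lambda=1/L$ and invoking Theorem~\ref{th7.3} once more. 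Your approach is more self-contained (it does not use Proposition~\ref{prb1}, Lemma~\ref{lemdis}, or Lemma~\ref{lemmov}) and arguably more transparent, while the paper's approach has the virtue of illustrating the general lemmas it has just developed and transfers verbatim to the Forelli case (Theorem~\ref{thf4}).
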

	
	\begin{proof}
		We know from Theorem \ref{thm4} that $B=P^\sigma$ for some permutation $\sigma$. We just have to show that $\sigma$ is a cycle of length $n$. 
		
	Consider $i<j$ in $\{1,\dots, n\}$. Let $x\in (\alpha_i,\beta_i)$ and $t>0$ so that $x+t\in (\alpha_j,\beta_j)$. With Theorem \ref{thl1}, we know that 
	$$\sum\{b_\omega : \omega\in \mathcal P_{x,t}, \End(x,t,\omega)=x+t\}=1.$$
	Thus, there is a path $\omega=i_1\dots i_m$ with $i_1=i$, $i_m=j$ and $b_\omega=b_{i_1,i_2}\dots b_{i_{m-1},i_m}\neq 0$. But then $i_{k}=\sigma(i_{k-1})$ for all $2\leq k\leq m$, and therefore $j=\sigma^{m-1}(i)$. Thus all the points belong to the same cycle of $\sigma$, hence $\sigma$ is a cycle of length $n$. This proves (i).  
	
	Consider now 
	$$\Lambda^*=\{t\in\br: t\lambda\in\bz \mbox{ for all }\lambda\in\Lambda\}.$$
	Clearly, $\Lambda^*$ is a closed subgroup of $\br$.
	
	Note that for any $1\leq i\leq n$, since $b_{i,\sigma(i)}=1$, with Proposition \ref{prb1}(i), we have that $\Lambda$ is contained in $\frac{1}{\alpha_{\sigma(i)}-\beta_{i}}\bz$. This implies that $\alpha_{\sigma(i)}-\beta_i$ is in $\Lambda^*$. This shows that all the translations that appear in (iii) are in $\Lambda^*$. 
	
	Since $\Lambda^*$ is a nontrivial closed subgroup of $\br$, $\Lambda^*=a\bz$ for some $a>0$. Then $\lambda a\in\bz$ for all $\lambda\in\Lambda$ so $\Lambda$ is a subset of $\frac1a\bz$. Using Lemma \ref{lemmov} at each step in (iii), we obtain that after each move, we have a set with the same spectrum $\Lambda$ as $\Omega$, and therefore the final set which is the interval $(\alpha_1,\alpha_1+L)$ has the same spectrum $\Lambda$. 
	
	The spectra of an interval of length $L$ are $\frac1L\bz+\lambda_0$ for some $\lambda_0\in\br$. However, since 
	$P^\sigma(e_0(\vec\alpha))=e_0(\vec \beta)$, we get with Theorem \ref{th7.3} that $0\in\Lambda$. Thus $\Lambda=\frac1L\bz$. From the previous argument, all the translations in (iii) are in $\Lambda^*=L\bz$ and so $\Omega$ is translation congruent to $(\alpha_1,\alpha_1+L)$ modulo $L\bz$, which implies that $\Omega$ tiles by $L\bz$, a fact that can also be obtained from Fuglede's original paper \cite{Fug74}: a set has spectrum a lattice if and only if it tiles with the dual lattice. 
	\end{proof}

	\begin{theorem}
		\label{thm6}
		Suppose all the intervals of $\Omega$ have the same length $l_k={\ell}$ for all $1\leq k\leq n$ and that $\mathscr T_B\neq\{0\}$. If $t_0\in\mathscr T_B$, $t_0>0$  and $p \geq 1$ is an integer such that $(p-1)\ell< t_0\leq p\ell$, then $B^p$ is a permutation matrix.  
	\end{theorem}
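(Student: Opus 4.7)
The plan is to imitate the converse direction of Theorem \ref{thm4}, but with Corollary \ref{cor4.6}(ii) replacing Corollary \ref{cor4.5}, so that the matrix $B^p$ plays the role previously played by $B$. Since all intervals have the same length $\ell$, Corollary \ref{cor4.6}(ii) expresses $U(t_0)$ as a finite sum whose coefficients are the entries of $B^p$, and then multiplicativity of $U(t_0)$ forces each row of $B^p$ to have at most one nonzero entry, equal to $1$. Unitarity of $B^p$ will close the argument.

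First I would locate, for each $1\leq i\leq n$, an open sub-interval $I_i\subset(\alpha_i,\beta_i)$ of positive measure on which Corollary \ref{cor4.6}(ii) applies with the given $t_0$ and $p$. For $x\in(\alpha_i,\beta_i)$ we have $\beta_i-x\in(0,\ell)$, so $t_0-(\beta_i-x)\in(t_0-\ell,t_0)$. If $t_0=p\ell$ this lies in $((p-1)\ell,p\ell)$ for every $x\in(\alpha_i,\beta_i)$, and if $(p-1)\ell<t_0<p\ell$ the required strict inequalities $(p-1)\ell<t_0-(\beta_i-x)<p\ell$ hold exactly when $x\in(\beta_i-(t_0-(p-1)\ell),\beta_i)$, which is a nonempty open sub-interval. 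On $I_i$ set $r(x):=t_0-(\beta_i-x)-(p-1)\ell\in(0,\ell)$; as $x$ ranges over $I_i$ the point $\alpha_j+r(x)$ traces a sub-interval $J_{i,j}\subset(\alpha_j,\beta_j)$.

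Next, for $f,g\in L^\infty(\Omega)$ and $x\in I_i$, Corollary \ref{cor4.6}(ii) gives
\begin{equation*}
[U(t_0)(fg)](x)=\sum_{j=1}^n[B^p]_{i,j}\,f(\alpha_j+r(x))\,g(\alpha_j+r(x)),
\end{equation*}
while, using Lemma \ref{lemm2} and $t_0\in\mathscr T_B$,
\begin{equation*}
[U(t_0)f](x)\cdot[U(t_0)g](x)=\sum_{j,j'=1}^n[B^p]_{i,j}[B^p]_{i,j'}\,f(\alpha_j+r(x))\,g(\alpha_{j'}+r(x)).
\end{equation*}
Setting these equal for a.e. $x\in I_i$ and choosing, for each $j_0$, the test function $f_{j_0}:=\chi_{(\alpha_{j_0},\beta_{j_0})}\in L^\infty(\Omega)$ (so that $f_{j_0}(\alpha_j+r(x))=\delta_{j,j_0}$ since the intervals are pairwise disjoint), I would take $f=f_{j_0},g=f_{j_0'}$ with $j_0\ne j_0'$ to conclude $[B^p]_{i,j_0}[B^p]_{i,j_0'}=0$, and $f=g=f_{j_0}$ to conclude $[B^p]_{i,j_0}=([B^p]_{i,j_0})^2$, hence $[B^p]_{i,j_0}\in\{0,1\}$. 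Therefore each row of $B^p$ has at most one nonzero entry, and that entry equals $1$.

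Finally, since $B$ is unitary, so is $B^p$ (because $(B^p)^*=(B^*)^p=B^{-p}$ and thus $B^p(B^p)^*=I$). Each row of $B^p$ then has norm $1$, which combined with the previous step forces exactly one entry equal to $1$ in each row; the analogous column condition gives a genuine permutation matrix. The main obstacle is really the book-keeping in the first step, ensuring that Corollary \ref{cor4.6}(ii) applies on a set of positive measure in every interval even in the boundary case $t_0=p\ell$; once that is in place, the rest is a direct transcription of the converse half of Theorem \ref{thm4}.
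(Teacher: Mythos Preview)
Your proposal is correct and follows essentially the same approach as the paper: both replicate the converse half of Theorem~\ref{thm4}, substituting Corollary~\ref{cor4.6}(ii) for Corollary~\ref{cor4.5} so that the entries of $B^p$ appear as coefficients, and then use test functions supported in single intervals together with unitarity of $B^p$ to force a permutation matrix. Your version is slightly more explicit in identifying the sub-interval $I_i$ on which the corollary applies (including the borderline case $t_0=p\ell$) and in choosing the test functions as characteristic functions, but these are cosmetic differences.
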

	
	\begin{proof}
We replicate the proof of Theorem \ref{thm4} using Corollary \ref{cor4.6} instead. Let $f,g\in L^\infty(\Omega)$. 
For a.e. $x\in (\alpha_i,\beta_i)$ such that $(p-1){\ell}<t_0-(\beta_i-x)<p{\ell}$, using Corollary \ref{cor4.6}, and setting $r(x)=t-(\beta_i-x)-(p-1){\ell}$, we have
$$[U(t_0)(fg)](x)=\sum_{j=1}^n[B^p]_{i,j}f(\alpha_j+r(x))g(\alpha_j+r(x)).$$

Since $U(t_0)$ is multiplicative, we must also have that 
$$[U(t_0)(fg)](x)=[U(t_0)f](x)\cdot [U(t_0)g](x)=\sum_{j=1}^n[B^p]_{i,j}f(\alpha_j+r(x))\cdot\sum_{j=1}[B^p]_{i,j} g(\alpha_j+r(x)).$$
For each $1\leq j_0\leq n$, chose $f\in L^\infty(\Omega)$ such that for a.e. $x\in (\alpha_i,\beta_i)$ with $(p-1)\ell<t_0-(\beta_i-x)<p\ell$, we have $f(\alpha_{j}+r(x))=\delta_{j,j_0}$ for all $1\leq j\leq n$. Use $f=f_{j_0}$, $g=f_{j_0'}$ in the two formulas above and we get
$$[B^p]_{i,j_0}[B^p]_{i,j_0'}=[U(t)(f_{j_0}f_{j_0'})](x)=0,$$ 
for all $j_0\neq j_0'$. Hence, there can be at most one index $1\leq j_0:=\sigma(i)\leq n$ such that $[B^p]_{i,j_0}\neq 0$. Since $B$ is unitary, so is $B^p$, and therefore, there is exactly one index such that $|[B^p]_{i,\sigma(i)}|=1$ and $[B^p]_{i,j}=0$ for all $j\neq \sigma(i)$. Next, observe that if we take $j_0=j_0'=\sigma(i)$, we get 
$$[B^p]_{i,\sigma(i)}=[U(t_0)(f_{j_0}^2)](x)=[B^p]_{i,\sigma(i)}^2,$$
implying that $[B^p]_{i,\sigma(i)}=1$. If $\sigma(i)=\sigma(i')=j$, then $[B^p]_{i,j}=[B^p]_{i',j}=1$, which contradicts the fact that $B^p$ is unitary. Thus $\sigma$ is a permutation and $B$ is a permutation matrix. 

	\end{proof}
	
	\begin{remark}
		\label{rem8}
		Assume that $\Omega$ has equal length intervals, has measure 1, and that the spectrum $\Lambda$ is rational, $\Lambda\subset\mathbb Q$. (It is an open question whether any such spectral set of measure 1 has a rational spectrum). We know from \cite{BM11,Kol12,IK13}, that the spectrum has to be periodic and the period $p$ is a positive integer. This will imply that 
		$$\Lambda=\{\lambda_1,\dots,\lambda_{r}\}+p\bz,$$
		for some rational numbers $\lambda_1,\dots,\lambda_r$. Then all the rational numbers in $\Lambda$ have a common denominator, which we denote by $D$. With Proposition \ref{prm1}(i), we get that the eigenvalues of $B$ have to be of the form $\lambda_B=e^{2\pi i\lambda \ell}$; since $\mathfrak m(\Omega)=1$, we have that $\ell=1/n$. Then $\lambda_B^{Dn}=1$, for all eigenvalues $\lambda_B$ of $B$. With the Spectral Theorem for the unitary matrix $B$, we get that $B^{Dn}=I_n$.
		
		Conversely, in the same situation -- when $\Omega$ is spectral, of measure 1, and with equal length intervals -- if $\mathscr T_B\neq\{0\}$, then with Theorem \ref{thm6}, $B^p$ is a permutation matrix. Then $B^{pk}=I_n$ for some integer $k\geq 0$ ($k$ a common multiple of the lengths of the cycles of the permutation). Then the eigenvalues of $B$ are roots of order $pk$ of unity, and, with Proposition \ref{prm1}(i), we get that $\Lambda$ is rational.

	\end{remark}

In what follows we consider the Forelli condition on $\{U(t)\}$, and we will see that we will obtain similar results under this restriction as for the multiplicative case. The Forelli condition was introduced in this context by Jorgensen in \cite{Jor82}, where it is proved (in any dimension) that a connected spectral set with the Forelli condition has to be a fundamental domain for a lattice. 

\begin{definition}
	\label{defo}
	For $g\in L^\infty(\Omega)$, let $M_f$ be the multiplication operator on $L^2(\Omega)$, $M_gf=gf$, for all $f\in L^2(\Omega)$. Let 
	$$\mathcal M:=\{M_g : g\in L^\infty(\Omega)\}.$$
	For $t\in\br$, we say that $U(t)$ satisfies the {\it Forelli condition} if 
	$$U(t)\mathcal M U(t)^*=\mathcal M.$$
	In this case $U(t)$ induces an automorphism on $\mathcal M$, which we denote by $\gamma_t$: 
	$$U(t)M_fU(t)^*=M_{\gamma_t(f)},\quad(f\in L^\infty(\Omega)).$$

	Let
	$$\mathscr F_B:=\{t\in\br : U(t)\mbox{ satisfies the Forelli condition}\}.$$
	We say that $\{U(t)\}$ satisfies the Forelli condition if $\mathscr F_B=\br$.
\end{definition}

\begin{proposition}
	\label{prf1}
	$\mathscr F_B$ is a closed subgroup of $\br$. 
\end{proposition}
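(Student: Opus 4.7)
The plan mimics the proof of Proposition \ref{prm2}. First I would check that $0\in\mathscr F_B$ (trivial, since $U(0)=I$ and $I\mathcal M I^*=\mathcal M$). Then for closure under addition: if $s,t\in\mathscr F_B$, then using the group property $U(s+t)=U(s)U(t)$,
\begin{equation*}
U(s+t)\mathcal M U(s+t)^*=U(s)U(t)\mathcal M U(t)^*U(s)^*=U(s)\mathcal M U(s)^*=\mathcal M,
\end{equation*}
so $s+t\in\mathscr F_B$. For closure under inverses: the condition $U(t)\mathcal M U(t)^*=\mathcal M$ rearranges to $\mathcal M=U(t)^*\mathcal M U(t)=U(-t)\mathcal M U(-t)^*$, since $U(-t)=U(t)^*$. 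Hence $-t\in\mathscr F_B$.

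For the topological closure, let $\{t_n\}\subset\mathscr F_B$ with $t_n\to t$. Fix $g\in L^\infty(\Omega)$; I want to show $U(t)M_gU(t)^*\in\mathcal M$ (and, by the same reasoning applied to $t_n$ replaced by $-t_n$ inside $\mathscr F_B$, also $U(t)^*M_gU(t)\in\mathcal M$). For any $f\in L^2(\Omega)$, strong continuity of $\{U(s)\}$ gives $U(t_n)^*f\to U(t)^*f$ in $L^2$; since $g$ is bounded, $M_g$ is a bounded operator, so $g\cdot U(t_n)^*f\to g\cdot U(t)^*f$ in $L^2$; and then
\begin{equation*}
\|U(t_n)M_gU(t_n)^*f-U(t)M_gU(t)^*f\|\leq \|M_gU(t_n)^*f-M_gU(t)^*f\|+\|(U(t_n)-U(t))M_gU(t)^*f\|\to 0.
\end{equation*}
Thus $U(t_n)M_gU(t_n)^*\to U(t)M_gU(t)^*$ in the strong operator topology.

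The remaining point, which is the only real content of the argument, is that $\mathcal M$ is SOT-closed. This is standard: $\mathcal M$ is a maximal abelian $*$-subalgebra of $B(L^2(\Omega))$, in particular $\mathcal M=\mathcal M'$, and commutants are always SOT-closed. Alternatively, if $M_{g_n}\to T$ strongly, then for every $h\in L^\infty(\Omega)$ and every $f\in L^2(\Omega)$ we have $M_hM_{g_n}f=M_{g_n}M_hf\to M_hTf$ and $=TM_hf$ in the limit, so $T$ commutes with every $M_h$, hence $T\in\mathcal M'=\mathcal M$. Applying this to the SOT-convergent sequence $\{U(t_n)M_gU(t_n)^*\}\subset\mathcal M$ gives $U(t)M_gU(t)^*\in\mathcal M$; repeating with $U(t_n)^*M_gU(t_n)\in\mathcal M$ (which holds since $t_n\in\mathscr F_B$ forces $U(t_n)^*\mathcal M U(t_n)=\mathcal M$) gives $U(t)^*M_gU(t)\in\mathcal M$. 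Combining these two inclusions yields $U(t)\mathcal M U(t)^*=\mathcal M$, i.e.\ $t\in\mathscr F_B$, completing the proof. The main (mild) obstacle is just recording that $\mathcal M$ is SOT-closed; the rest is a direct transcription of the argument for Proposition \ref{prm2}.
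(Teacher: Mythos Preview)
Your proof is correct and follows essentially the same approach as the paper's: the group axioms are checked directly from $U(s+t)=U(s)U(t)$ and $U(-t)=U(t)^*$, and closedness is obtained by combining strong continuity of $\{U(t)\}$ with the fact that $\mathcal M$ is SOT-closed (the paper simply invokes that $\mathcal M$ is a von Neumann algebra, while you spell out the MASA/commutant argument). Your treatment of the two inclusions needed for the equality $U(t)\mathcal M U(t)^*=\mathcal M$ is the same as the paper's ``repeat the argument for $U(-t)$'' step.
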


\begin{proof}
	Let $t_1,t_2\in\mathscr F_B$ and $g\in L^\infty(\Omega)$. Then 	
	$$U(t_1+t_2)M_gU(t_1+t_2)^*=U(t_1)U(t_2)M_gU(t_2)^*U(t_1)^*=U(t_1)M_{\gamma_{t_2}(g)}U(t_1)^*=M_{\gamma_{t_1}(\gamma_{t_2}(g))}.$$
	This means that $t_1+t_2\in \mathscr F_B$. 
	
	If $t\in\mathscr F_B$, then $U(t)\mathcal MU(t)^*=\mathcal M$, which implies that $\gamma_t$ is indeed onto and one-to-one, so it has an inverse, and if $g=\gamma_t(f)$, for some $f\in L^\infty(\Omega)$, then 
	$$U(-t)M_{g}U(t)=M_f=M_{\gamma_t^{-1}(g)}.$$
	In particular, $-t\in \mathscr F_B$ and therefore $\mathscr F_B$ is a subgroup of $\br$. 
	
	We prove that $\mathscr F_B$ is closed. Let $t_n\in\mathscr F_B$ with $t_n\to t$. Let $g\in L^\infty(\Omega)$.  Since $\{U(t)\}$ is strongly convergent, 
	$M_{\gamma_{t_n}(g)}=U(t_n)M_gU(t_n)^*$ converges strongly to $U(t)M_gU(t)^*$. Since $\mathcal M$ is a von Neumann algebra, it is closed in the strong operator topology and therefore, $U(t)M_gU(t)^*\in\mathcal M$. This proves $U(t)\mathcal MU(t)^*\subseteq \mathcal M$. Repeating the argument for $U(-t)$, we obtain the reverse inclusion and therefore $U(-t)$ satisfies the Forelli condition, so $t\in\mathscr F_B$ and $\mathscr F_B$ is closed.  
\end{proof}

\begin{definition}
	\label{def2}
	We say that the unitary matrix $B$ is a {\it weighted permutation matrix} if there exists a permutation $\sigma$ of $\{1,\dots, n\}$ such that $|b_{i,\sigma(i)}|=1$ for all $1\leq i\leq n$ (which implies that $b_{i,j}=0$ for all $j\neq \sigma(i)$).
\end{definition}

\begin{theorem}
	\label{thf3}
	$\{U(t)\}$ satisfies the Forelli condition if and only if $B$ is a weighted permutation matrix. 
\end{theorem}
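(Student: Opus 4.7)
The plan is to prove the two implications separately, using the explicit formula for $U(t)$ from Theorem \ref{thu2} (together with Corollary \ref{cor4.5} for short times).

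\textbf{Weighted permutation $\Rightarrow$ Forelli.} Suppose $b_{i,\sigma(i)}=e^{2\pi i\theta_i}$ for some permutation $\sigma$ and phases $\theta_i\in\br$. Then for every $x\in\Omega$ and $t>0$, the only admissible path $\omega=i_1 i_2\dots i_m\in\mathcal P_{x,t}$ with $b_\omega\neq 0$ is the deterministic one $i_k=\sigma^{k-1}(i)$, and its weight $b_\omega$ is a product of unimodular numbers, hence unimodular. By Theorem \ref{thu2}, $U(t)$ takes the form $[U(t)f](x)=c(x,t)\,f(\phi_t(x))$ where $c(\cdot,t)\in L^\infty(\Omega)$ has modulus one almost everywhere and $\phi_t\colon\Omega\to\Omega$ is a measurable map. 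The analogous statement for $t<0$ follows from Theorem \ref{th2.5}, and unitarity of $U(t)$ together with $U(-t)=U(t)^{-1}$ forces $\phi_t$ to be a measure-preserving bijection a.e.\ with $\phi_t^{-1}=\phi_{-t}$ a.e. A short computation then gives $[U(t)^* g](y)=\overline{c(\phi_{-t}(y),t)}\,g(\phi_{-t}(y))$, and the unimodular phases cancel to produce
$$[U(t)M_g U(t)^* f](x) = g(\phi_t(x))\,f(x),$$
so $U(t)M_gU(t)^*=M_{g\circ\phi_t}\in\mathcal M$ for every $g\in L^\infty(\Omega)$.

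\textbf{Forelli $\Rightarrow$ weighted permutation.} Assume $\{U(t)\}$ satisfies the Forelli condition and fix $t\in(0,\lmin)$. For an arbitrary $j_0\in\{1,\dots,n\}$, take $g=\chi_{(\alpha_{j_0},\alpha_{j_0}+t)}\in L^\infty(\Omega)$ and let $h=\gamma_t(g)\in L^\infty(\Omega)$, so that $U(t)(gf)=h\cdot U(t)f$ for every $f\in L^2(\Omega)$. I would evaluate both sides at a point $x\in(\beta_i-t,\beta_i)$ using Corollary \ref{cor4.5}: the left-hand side collapses to $b_{i,j_0}\,f(\alpha_{j_0}+t-(\beta_i-x))$ since, among the points $\alpha_j+t-(\beta_i-x)$ for $j=1,\dots,n$, only the one with $j=j_0$ lies in the support of $g$; the right-hand side equals $h(x)\sum_{j=1}^n b_{i,j}\,f(\alpha_j+t-(\beta_i-x))$. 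Because the points $\alpha_j+t-(\beta_i-x)$ lie in the pairwise disjoint intervals $(\alpha_j,\alpha_j+t)\subset(\alpha_j,\beta_j)$ as $x$ ranges over $(\beta_i-t,\beta_i)$, choosing $f$ supported successively in each of these intervals isolates each term and yields: if $b_{i,j_0}\neq 0$ then $h(x)=1$ on $(\beta_i-t,\beta_i)$ and $b_{i,j}=0$ for all $j\neq j_0$. Hence every row of $B$ contains at most one nonzero entry, and since $B$ is unitary each row has exactly one entry of modulus one; applying the same norm-one condition to columns forces the resulting map $i\mapsto j$ to be a bijection, so $B$ is a weighted permutation matrix.

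\textbf{Main obstacle.} The forward direction is essentially a formal calculation once the twisted-translation structure of $U(t)$ is in hand; the only subtlety is justifying that $\phi_t$ is a genuine measure-preserving bijection, which comes from unitarity and the group property $U(t)U(-t)=I$. The real work is in the backward direction, where the key tactical choice is $g=\chi_{(\alpha_{j_0},\alpha_{j_0}+t)}$ with $t<\lmin$: this puts the argument $\alpha_j+t-(\beta_i-x)$ appearing in Corollary \ref{cor4.5} exactly inside $(\alpha_j,\alpha_j+t)$ as $x$ varies over $(\beta_i-t,\beta_i)$, so that varying $f$ separately in each of the $n$ disjoint intervals $(\alpha_j,\alpha_j+t)$ decouples the contributions of the entries $b_{i,1},\dots,b_{i,n}$ and delivers the row-sparsity of $B$.
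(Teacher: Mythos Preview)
Your proposal is correct and follows the same overall strategy as the paper (reduce to the short-time formula from Corollary~\ref{cor4.5} and decouple the entries $b_{i,j}$ by appropriate choices of test functions), but with two tactical differences worth noting.

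For the forward direction, the paper only verifies the Forelli condition for $0<t<\lmin$ by writing down $\gamma_t(g)$ explicitly, and then invokes Proposition~\ref{prf1} (that $\mathscr F_B$ is a closed subgroup of $\br$) to conclude $\mathscr F_B=\br$. Your argument via the global twisted-translation form $[U(t)f](x)=c(x,t)f(\phi_t(x))$ works, but the cocycle cancellation $c(x,t)\,c(\phi_t(x),-t)=1$ and the a.e.\ bijectivity of $\phi_t$ need to be spelled out (they do follow from $U(t)U(-t)=I$ as you indicate). The paper's route is shorter because it reuses Proposition~\ref{prf1}; yours is more self-contained.

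For the backward direction, both proofs evaluate $U(t)M_gf=M_{\gamma_t(g)}U(t)f$ on $(\beta_i-t,\beta_i)$ via Corollary~\ref{cor4.5}. The paper picks a $g$ that is $1$ at $\alpha_{j_1}+t-(\beta_i-x)$ and $0$ at the other $\alpha_j+t-(\beta_i-x)$, then chooses
\[
f(\alpha_j+t-(\beta_i-x))=\cj{b_{i,j}}\,\bigl(\cj{g(\alpha_j+t-(\beta_i-x))}-\cj{\gamma_t(g)(x)}\bigr)
\]
to force a sum of squares $\sum_j|b_{i,j}|^2|g(\alpha_j+\cdots)-\gamma_t(g)(x)|^2=0$. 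Your choice $g=\chi_{(\alpha_{j_0},\alpha_{j_0}+t)}$ followed by varying $f$ over the disjoint strips $(\alpha_j,\alpha_j+t)$ is cleaner: once $b_{i,j_0}\neq 0$ forces $h=1$ a.e.\ on $(\beta_i-t,\beta_i)$, the remaining test functions immediately kill $b_{i,j_1}$ for $j_1\neq j_0$. Both arguments deliver the same row-sparsity conclusion; yours avoids the somewhat opaque sum-of-squares trick.
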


\begin{proof}
	Assume that $B$ is a weighted permutation matrix with permutation $\sigma$. Let $0<t<\lmin=\min_k l_k$. For $g\in L^\infty(\Omega)$ define 
	$$\gamma_t(g)(x)=\left\{\begin{array}{cc}
		g(x+t),&\mbox{if }x\in(\alpha_i,\beta_i-t)\mbox{ for some $i$};\\
		b_{i,\sigma(i)}g(\alpha_{\sigma(i)}+t-(\beta_i-x)),&\mbox{if }x\in(\beta_i-t,\beta_i)\mbox{ for some $i$}.
		\end{array}\right.$$
Let $f\in L^2(\Omega)$.		We check that $U(t)M_gf=M_{\gamma_t(g)}U(t)f$. Let $x\in\Omega$. For a.e. $x\in(\alpha_i,\beta_i-t)$ we have 
$$U(t)M_gf(x)=g(x+t)f(x+t)=M_{\gamma_t(g)}U(t)f(x)$$
For a.e. $x\in(\beta_i-t,\beta_i)$, by Corollary \ref{cor4.5}, we have
$$U(t)M_gf(x)=b_{i,\sigma(i)}g(\alpha_{\sigma(i)}+t-(\beta_i-x))f(\alpha_{\sigma(i)}+t-(\beta_i-x))=M_{\gamma_t(g)}U(t)f(x).$$
Thus $U(t)M_fU(t)^*=M_{\gamma_t(g)}$, and therefore $t\in\mathscr F_B$. Since $\mathscr F_B$ is a closed subgroup of $\br$, it is either a lattice or $\br$, so $\mathscr F_B$ has to be $\br$. 

For the converse, assume that $\{U(t)\}$ satisfies the Forelli condition. Let $0<t<\lmin$. Let $g\in L^\infty(\Omega)$ and $f\in L^2(\Omega)$. With Corollary \ref{cor4.5}, since $U(t)M_gf=M_{\gamma_t(g)}U(t)f$, we have for almost every $x\in(\beta_i-t,\beta_i)$, 
\begin{equation}
	\label{eqf*1}
	\sum_{j=1}^nb_{i,j}f(\alpha_j+t-(\beta_i-x))g(\alpha_j+t-(\beta_i-x))=\gamma_t(g)(x)\sum_{j=1}^nb_{i,j}f(\alpha_j+t-(\beta_i-x)).
\end{equation}
We prove that there can not be two indices $j_1\neq j_2$ with $b_{i,j_1}, b_{i,j_2}\neq 0$. If not, then pick $g\in L^\infty(\Omega)$ such that for  $x\in(\beta_i-t,\beta_i)$, $g(\alpha_{j_1}+t-(\beta_i-x))=1$ and $g(\alpha_j+t-(\beta_i-x))=0$ for $j\neq j_1$. Pick also $f\in L^2(\Omega)$ such that for $x\in (\beta_i-t,\beta_i)$, 
$$f(\alpha_j+t-(\beta_i-x))=\cj b_{i,j}\left(\cj g(\alpha_j+t-(\beta_i-x))-\cj{\gamma_t(g)(x)}\right),\quad (1\leq j\leq n).$$
Plugging these into \eqref{eqf*1} we obtain for a.e. $x\in (\beta_i-t,\beta_i)$,
$$\sum_{j=1}^n|b_{i,j}|^2\left|g(\alpha_j+t-(\beta_i-x))-\gamma_t(g)(x)\right|^2=0.$$
Since $b_{i,j_1},b_{i,j_2}\neq 0$, we must have 
$$\gamma_t(g)(x)=g(\alpha_{j_1}+t-(\beta_i-x))=1\mbox{ and }\gamma_t(g)(x)=g(\alpha_{j_2}+t-(\beta_i-x))=0,$$
		which is a contradiction. 
		
		Thus, there is only one index $j$ such that $b_{i,j}\neq 0$ and we denote it by $\sigma(i)$. Since $B$ is unitary, we must have $|b_{i,\sigma(i)}|=1$. To prove that $\sigma$ is a permutation, we use the same argument as at the end of the proof of Theorem \ref{thm4}.

\end{proof}

\begin{theorem}
	\label{thf4}
	Suppose $\Omega$ is spectral and it satisfies the Forelli condition. Let $L:=\mathfrak m(\Omega)=l_1+\dots+l_n$. Then:
	\begin{enumerate}
		\item $B$ is a weighted permutation matrix for some cycle $\sigma$ of length $n$. There exists $\theta_0\in \br$ such that 
		\begin{equation}
			\label{eqf4.1}
			b_{i,\sigma(i)}=e^{2\pi i\frac{\theta_0}L(\alpha_{\sigma(i)}-\beta_i)},\quad(1\leq i\leq n).
		\end{equation}
		\item All the numbers $\alpha_{\sigma(i)}-\beta_i$ are in $L\bz$. 
		\item $\Lambda=\frac1{L}(\bz-\theta_0)$ and $\Omega$ tiles by $L\bz$. 
		\item The statements in Theorem \ref{thm5}(iii) hold. 
	\end{enumerate}
\end{theorem}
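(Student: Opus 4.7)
The plan is to mirror the proof of Theorem \ref{thm5}, substituting the weighted permutation matrix (produced here by Theorem \ref{thf3}) for the honest permutation matrix used there. First, Theorem \ref{thf3} gives $b_{i,\sigma(i)}=e^{2\pi i\eta_i}$ for some permutation $\sigma$ and some $\eta_i\in\br$, with $b_{i,j}=0$ for $j\neq\sigma(i)$. To see that $\sigma$ is a single cycle of length $n$, I would repeat the argument from Theorem \ref{thm5}(i): given $i\neq j$, pick $x\in(\alpha_i,\beta_i)$ and $t$ with $x+t\in(\alpha_j,\beta_j)$; Theorem \ref{thl1}(ii) produces an admissible path $\omega=i_1\dots i_m$ from $i$ to $j$ with $b_\omega\neq 0$, and the structure of $B$ forces $i_{k+1}=\sigma(i_k)$, so $j=\sigma^{m-1}(i)$.

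Next, set $a_i:=\alpha_{\sigma(i)}-\beta_i$. Since $|b_{i,\sigma(i)}|=1$, Corollary \ref{cormov} applies at each step, so I would follow the sequence of moves described in Theorem \ref{thm5}(iii), translating $(\alpha_{\sigma^k(1)},\beta_{\sigma^k(1)})$ adjacent to the previously built block. Every move preserves the spectrum $\Lambda$, and the final set is the interval $I=(\alpha_1,\alpha_1+L)$. Therefore $I$ is spectral with spectrum $\Lambda$, and since any spectrum of an interval of length $L$ is of the form $\frac{1}{L}(\bz+c)$, I obtain $\Lambda=\frac{1}{L}(\bz+c)$ for some $c\in\br$. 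This is where most of the work sits, but every step is essentially a direct translation of Theorem \ref{thm5}'s argument.

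Now I would combine $\Lambda=\frac{1}{L}(\bz+c)$ with Proposition \ref{prb1}(i), which gives $\Lambda\subseteq \frac{1}{a_i}(\bz-\eta_i)$ for every $i$. Subtracting two elements of $\Lambda$ yields $\frac{1}{L}\bz\subseteq \frac{1}{a_i}\bz$, i.e. $a_i\in L\bz$, which is (ii). Plugging any specific $\lambda=\frac{c}{L}\in\Lambda$ into $\lambda a_i+\eta_i\in\bz$ gives $\eta_i\equiv -\frac{c\,a_i}{L}\pmod{\bz}$; setting $\theta_0=-c$ yields
\[
b_{i,\sigma(i)}=e^{2\pi i\eta_i}=e^{2\pi i\frac{\theta_0}{L}(\alpha_{\sigma(i)}-\beta_i)},
\]
which is (i), and $\Lambda=\frac{1}{L}(\bz-\theta_0)$, half of (iii). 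For the tiling statement in (iii), the translations used in the inductive moves are of the form $a_i\in L\bz$ by (ii), so $\Omega$ is translation congruent modulo $L\bz$ to $(\alpha_1,\alpha_1+L)$, which clearly tiles $\br$ by $L\bz$, so $\Omega$ does too. Finally (iv) is just a restatement of the sequence of moves carried out above.

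The main obstacle I anticipate is the bookkeeping in the inductive moves: after translating the $k$-th interval, one has to verify that the resulting union still has $b_{\sigma^k(1),\sigma^{k+1}(1)}$ of modulus $1$ in the boundary matrix of the new spectral set (so that Corollary \ref{cormov} applies at the next step). This should be automatic from Corollary \ref{cormov} preserving both $\Lambda$ and the relevant boundary entries, but it is the only part of the argument that is not purely algebraic, so that is where I would be most careful.
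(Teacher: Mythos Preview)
Your proposal is correct and follows essentially the same route as the paper. The one refinement worth noting concerns your ``main obstacle'': the paper sidesteps the bookkeeping entirely by citing Lemma \ref{lemmov} rather than Corollary \ref{cormov} for the inductive moves. Since Proposition \ref{prb1}(i) gives $\Lambda\subseteq\frac{1}{a_i}(\bz-\eta_i)$ for every $i$ from the \emph{original} boundary matrix, and since the hypothesis of Lemma \ref{lemmov} involves only $\Lambda$ (which is preserved after each move), you never need to know the boundary matrix of the intermediate sets. Each composite translation in Theorem \ref{thm5}(iii) is a sum of the $a_{\sigma^j(1)}$, so you apply Lemma \ref{lemmov} one term at a time. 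Your derivation of (ii) and the formula \eqref{eqf4.1} via the inclusion $\Lambda\subseteq\frac{1}{a_i}(\bz-\eta_i)$ is equivalent to the paper's direct manipulation of the boundary condition $Be_\lambda(\vec\alpha)=e_\lambda(\vec\beta)$ for $\lambda=\frac{k-\theta_0}{L}$.
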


\begin{proof}
	By Theorem \ref{thf3}, $B$ is a weighted permutation matrix for some permutation $\sigma$. The proof that $\sigma$ is a cycle of length $n$ is the same as the proof of Theorem \ref{thm5}(i). 
	
	Let $b_{i,\sigma(i)}=e^{2\pi i\theta_i}$, for $1\leq i\leq n$. For $\lambda\in\Lambda$ we have $Be_\lambda(\vec\alpha)=e_\lambda(\vec\beta)$ and therefore, $b_{i,\sigma(i)}e^{2\pi i\lambda\alpha_{\sigma(i)}}=e^{2\pi i\lambda\beta_i}$, and $e^{2\pi i(\lambda(\alpha_{\sigma(i)}-\beta_i)+\theta_i)}=1$. Therefore 
	$$\Lambda\subseteq \frac{1}{\alpha_{\sigma(i)}-\beta_i}(\bz-\theta_i).$$
	We perform the translations from Theorem \ref{thm5}(iii). Using Lemma \ref{lemmov}, after each translation the new set has the same the spectrum $\Lambda$. At the end, the interval $(\alpha_1,\alpha_1+L)$ has spectrum $\Lambda$. But all the spectra of an interval are known, and we must have some number $\theta_0\in\br$ such that 
	$$\Lambda=\frac1L(\bz-\theta_0).$$
	In particular $\Omega$  also has the spectrum $\frac1L\bz$ and hence it tiles by $L\bz$.
	
	We use again the boundary conditions $Be_\lambda(\vec\alpha)=e_\lambda(\vec\beta)$, for all $\lambda\in\frac{1}{L}(\bz-\theta_0)$ and we obtain 
	$$b_{i,\sigma(i)}e^{2\pi i \frac {k-\theta_0}L\alpha_{\sigma(i)}}=e^{2\pi i\frac {k-\theta_0}\beta_i}\quad (1\leq i\leq n, k\in\bz).$$
	Then 
	$$b_{i,\sigma(i)}e^{-2\pi i\frac{\theta_0}L(\alpha_{\sigma(i)}-\beta_i)}=e^{-2\pi i\frac kL(\alpha_{\sigma(i)}-\beta_i)}.$$
	The left-hand side is independent of $k$, therefore the right-hand side should be as well. Plug in $k=0$ and $k=1$ and obtain $e^{-2\pi i\frac 1L(\alpha_{\sigma(i)}-\beta_i)}=1$, which means that $\alpha_{\sigma(i)}-\beta_i\in L\bz$. Then 
	$$b_{i,\sigma(i)}=e^{2\pi i\frac{\theta_0}L(\alpha_{\sigma(i)}-\beta_i)}.$$
\end{proof}

\begin{theorem}
	\label{thf5}
	Suppose all the intervals of $\Omega$ have the same length $l_k={\ell}$ for all $1\leq k\leq n$ and that $\mathscr F_B\neq\{0\}$. If $t_0\in\mathscr F_B$, $t_0>0$  and $(p-1)\ell< t_0\leq p\ell$, then $B^p$ is a weighted permutation matrix.  
\end{theorem}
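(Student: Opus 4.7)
The plan is to mimic the proof of Theorem \ref{thf3}, replacing the one-step formula from Corollary \ref{cor4.5} with the $p$-step formula from Corollary \ref{cor4.6}, which is the exact analogue of what Theorem \ref{thm6} did to Theorem \ref{thm4}. First I would check that the region where Corollary \ref{cor4.6} applies is nonempty for every $i$: writing $s=\beta_i-x$ with $s\in(0,\ell)$, the condition $(p-1)\ell<t_0-s<p\ell$ is equivalent to $s\in(t_0-p\ell,t_0-(p-1)\ell)\cap(0,\ell)=(0,t_0-(p-1)\ell)$, which has positive Lebesgue measure since $(p-1)\ell<t_0$. Correspondingly, $r(x,t_0)=t_0-(\beta_i-x)-(p-1)\ell$ sweeps out the interval $(0,t_0-(p-1)\ell)$ as $x$ varies, so each point $\alpha_j+r(x,t_0)$ ranges over a nondegenerate subinterval of $(\alpha_j,\beta_j)$. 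This gives me the freedom to prescribe values of $L^\infty$ and $L^2$ functions independently at these $n$ subintervals.

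Next, I would write out the Forelli identity $U(t_0)(gf)=\gamma_{t_0}(g)\cdot U(t_0)f$ for arbitrary $g\in L^\infty(\Omega)$, $f\in L^2(\Omega)$, and apply Corollary \ref{cor4.6}(ii) to both sides. For a.e. $x\in(\alpha_i,\beta_i)$ in the admissible region, this yields
\begin{equation*}
\sum_{j=1}^n [B^p]_{i,j}\, g(\alpha_j+r(x,t_0))\,f(\alpha_j+r(x,t_0))=\gamma_{t_0}(g)(x)\sum_{j=1}^n [B^p]_{i,j}\,f(\alpha_j+r(x,t_0)),
\end{equation*}
which is exactly the analogue of equation \eqref{eqf*1} with $B$ replaced by $B^p$. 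Fixing $i$, I would rule out the possibility of two indices $j_1\neq j_2$ with $[B^p]_{i,j_1}\neq 0$ and $[B^p]_{i,j_2}\neq 0$ by running the same selection trick as in Theorem \ref{thf3}: pick $g$ so that $g(\alpha_{j_1}+r(x,t_0))=1$ and $g(\alpha_j+r(x,t_0))=0$ for $j\neq j_1$, and then pick $f$ so that $f(\alpha_j+r(x,t_0))=\overline{[B^p]_{i,j}}\bigl(\overline{g(\alpha_j+r(x,t_0))}-\overline{\gamma_{t_0}(g)(x)}\bigr)$. Subtracting $\gamma_{t_0}(g)(x)$ times the second sum from the first and pairing with $f$ produces
\begin{equation*}
\sum_{j=1}^n |[B^p]_{i,j}|^2\,\bigl|g(\alpha_j+r(x,t_0))-\gamma_{t_0}(g)(x)\bigr|^2=0,
\end{equation*}
forcing $\gamma_{t_0}(g)(x)=g(\alpha_{j_1}+r(x,t_0))=1$ and $\gamma_{t_0}(g)(x)=g(\alpha_{j_2}+r(x,t_0))=0$, a contradiction.

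Hence, for each $i$ there is exactly one $\sigma(i)\in\{1,\dots,n\}$ with $[B^p]_{i,\sigma(i)}\neq 0$. Since $B^p$ is itself unitary (as a power of a unitary matrix), each of its rows and columns has norm one, so $|[B^p]_{i,\sigma(i)}|=1$; the same column-norm consideration, exactly as at the end of Theorem \ref{thm4}, shows that $\sigma$ must be injective and therefore a permutation of $\{1,\dots,n\}$. Thus $B^p$ is a weighted permutation matrix with permutation $\sigma$.

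The only genuine subtlety, which I would want to handle carefully, is the simultaneous construction of the test functions $f$ and $g$: they must be prescribed consistently on the $n$ subintervals $(\alpha_j,\alpha_j+t_0-(p-1)\ell)$ while being $L^\infty$, respectively $L^2$, on all of $\Omega$. Because these subintervals are disjoint (they live in distinct intervals of $\Omega$), the prescription can be made independently on each, and then extended by zero off $\bigcup_j(\alpha_j,\alpha_j+t_0-(p-1)\ell)$; this is the same mechanism that made the proof of Theorem \ref{thf3} go through. All other steps are word-for-word translations of that proof with $B\rightsquigarrow B^p$ and $t\rightsquigarrow r(x,t_0)$.
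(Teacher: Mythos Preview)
Your proposal is correct and follows essentially the same argument as the paper: apply Corollary~\ref{cor4.6} to the Forelli identity $U(t_0)M_gf=M_{\gamma_{t_0}(g)}U(t_0)f$, then run the same test-function contradiction from Theorem~\ref{thf3} with $B$ replaced by $B^p$. Your added verification that the admissible region is nonempty and your discussion of how to build $f$ and $g$ are details the paper leaves implicit, but the core argument is identical.
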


\begin{proof}
Let $g\in L^\infty(\Omega)$ and $f\in L^2(\Omega)$. We have the Forelli condition $U(t_0)M_gf=U_{\gamma_{t_0}(g)}U(t_0)f$. Then, with Corollary \ref{cor4.6}, for a.e. $x\in (\alpha_i,\beta_i)$, with $(p-1)\ell<t_0-(\beta_i-x)<p\ell$, setting $r(x)=t_0-(\beta_i-x)-(p-1)\ell$, 
$$\sum_{j=1}^n[B^p]_{i,j}f(\alpha_j+r)g(\alpha_j+r(x))=\gamma_{t_0}(x)\sum_{j=1}^n[B^p]_{i,j}f(\alpha_j+r(x)).$$	
Suppose there are two indices $j_1,j_2$ such that $[B^p]_{i,j_1},[B^p]_{i,j_2}\neq 0$. Pick $g\in L^\infty(\Omega)$ such that $g(\alpha_{j_1}+r(x))=1$ and $g(\alpha_j+r(x))=0$ for $j\neq j_1$, for $x\in(\alpha_i,\beta_i)$ with  $(p-1)\ell<t_0-(\beta_i-x)<p\ell$. Pick also $f\in L^2(\Omega)$ such that for  $x\in(\alpha_i,\beta_i)$ with  $(p-1)\ell<t_0-(\beta_i-x)<p\ell$, 
$$f(\alpha_j+r(x))=\cj{[B^p]_{i,j}}\left(\cj g(\alpha_j+r(x))-\cj{\gamma_{t_0}(x)}\right),\quad(1\leq j\leq n).$$
Plug these into the relation above to obtain 
$$\sum_{j=1}^n|[B^p]_{i,j}|^2\left|g(\alpha_j+r(x))-\gamma_{t_0}(g)(x)\right|^2=0.$$

Since $[B^p]_{i,j_1},[B^p]_{i,j_2}\neq 0$, we obtain for a.e. $x\in (\alpha_i,\beta_i)$ with  $(p-1)\ell<t_0-(\beta_i-x)<p\ell$, 
$$\gamma_{t_0}(g)(x)=g(\alpha_{j_1}+r(x))=1\mbox{ and }\gamma_{t_0}(g)(x)=g(\alpha_{j_2}+r(x))=0,$$
which is the contradiction. The rest is as at the end of the proof of Theorem \ref{thf3}.
\end{proof}

\begin{acknowledgements} The authors are pleased to acknowledge the discussions, the help, the inspiration and the suggestions from Professors Deguang Han, Palle Jorgensen, Mihalis Kolountzakis and ChunKit Lai.
\end{acknowledgements}

\bibliographystyle{alpha}	

\bibliography{eframes}

\end{document}